\newtheorem{Theorem}{Theorem}[part]
\newtheorem{Proposition}{Proposition}[part]
\newtheorem{Assumption}{Assumption}[part]
\newtheorem{Lemma}{Lemma}[part]
\newtheorem{Corollary}{Corollary}[part]
\newtheorem{Remark}{Remark}[part]
\newtheorem{Example}{Example}[part]
\makeatletter \@addtoreset{equation}{section}
\newcommand{\cA}{\mathcal{A}}
\newcommand{\cB}{\mathcal{B}}
\newcommand{\cC}{\mathcal{C}}
\newcommand{\cD}{\mathcal{D}}
\newcommand{\cF}{\mathcal{F}}
\newcommand{\cG}{\mathcal{G}}
\newcommand{\cH}{\mathcal{H}}
\newcommand{\cI}{\mathcal{I}}
\newcommand{\cJ}{\mathcal{J}}
\newcommand{\cK}{\mathcal{K}}
\newcommand{\cM}{\mathcal{M}}
\newcommand{\cP}{\mathcal{P}}
\newcommand{\cV}{\mathcal{V}}
\newcommand{\cY}{\mathcal{Y}}
\newcommand{\cZ}{\mathcal{Z}}
\newcommand{\F}{\mathbb{F}}
\newcommand{\G}{\mathbb{G}}
\renewcommand{\P}{\mathbb{P}}
\renewcommand{\H}{\mathbb{H}}
\newcommand{\R}{\mathbb{R}}
\newcommand{\N}{\mathbb{N}}
\newcommand{\A}{\mathbb{A}}
\newcommand{\fn}{\mathfrak n}
\def \proof{{\noindent \bf Proof. }}
\def \eproof{\hbox{ }\hfill$\Box$}
\newcommand{\ud}{\mathrm{d}}
\newcommand{\1}{{\bf 1}}
\newcommand{\set}[1]
    {\ensuremath{\{ #1 \}}}
\newcommand{\HP}[1] 
    {\ensuremath{\mathscr{H}^{#1}}}
\newcommand{\ind}[1]{1_{ \left\{ #1 \right\}}}
\newcommand{\esp}[1]{\ensuremath{\mathbb{E} \!\! \left[#1\right] }}
 \newcommand{\espcond}[2]{\ensuremath{\mathbb E\!\!\left[\left. #1 \right| #2\right]}}
 \newcommand{\probcond}[2]{\ensuremath{\mathbb P\!\!\left(#1 | #2\right)}}
\newcommand{\Dslice}{\cD_{\!\circ}}
\DeclareMathOperator*{\esssup}{ess\,sup}
\DeclareMathOperator*{\argmax}{arg\,max}
\DeclareMathOperator{\Det}{Det}
\DeclareMathOperator{\Trace}{Tr}
\title{Switching problems with controlled randomisation and associated obliquely reflected BSDEs}
\author{Cyril B\'en\'ezet\thanks{Centre de Mathématiques Appliquées (CMAP), Ecole Polytechnique and CNRS, Université Paris-Saclay, Route de Saclay, 91128 Palaiseau Cedex, France ({\tt cyril.benezet@polytechnique.edu})},
  Jean-Fran{\c{c}}ois Chassagneux\thanks{UFR de Math{\'e}matiques \& LPSM, Universit{\'e} de Paris, B{\^a}timent Sophie Germain, 8 place Aur{\'e}lie Nemours, 75013 Paris, France ({\tt chassagneux@lpsm.paris})}, Adrien Richou\thanks{UMR 5251 \& IMB, Universit{\'e} de Bordeaux, F-33400 Talence, France ({\tt adrien.richou@math.u-bordeaux.fr})}}
\begin{document}

\maketitle

\begin{abstract}
{
We introduce and study a new class of optimal switching problems, namely \emph{switching problem with controlled randomisation}, where some extra-randomness impacts the choice of switching modes and associated costs. We show that the optimal value of the switching problem is related to a new class of multidimensional obliquely reflected BSDEs. These BSDEs allow as well to construct an optimal strategy and thus to solve completely the initial problem. The other main contribution of our work is to prove new existence and uniqueness results for these obliquely reflected BSDEs. This is achieved by a careful study of the domain of reflection and the construction of an appropriate oblique reflection operator in order to invoke results from \cite{chassagneux2018obliquely}.
}
\end{abstract}


\section{Introduction}

In this work, we introduce and study a new class of optimal switching problems in stochastic control theory.
%
\noindent The interest in switching problems comes mainly from their connections to financial and economic problems, like the pricing of \emph{real options} \cite{carmona2010valuation}. In a celebrated article \cite{HJ07}, Hamadène and Jeanblanc study the fair valuation of  a company producing electricity. In their work, the company management can choose between  two modes of production for their power plant --operating or close-- and a time of switching from one state to another, in order to maximise its expected return. Typically, the company will buy electricity on the market if the power station is not operating. The company  receives a profit for delivering electricity in each regime. The main point here is that a fixed cost  penalizes the profit upon switching. This \emph{switching problem} has been generalized to more than two modes of production \textcolor{black}{\cite{DHP09}}. Let us now discuss this \emph{switching problem} with $d \ge 2$ modes in more details.
The costs to switch from one state to another are given by a matrix $(c_{i,j})_{1 \le i,j \le d}$. The management  optimises the expected company profits by choosing switching strategies which are sequences of stopping times $(\tau_n)_{n \ge 0}$ and modes $(\zeta_n)_{n \ge 0}$. The current state of the strategy is given by
$a_t = \sum_{k = 0}^{+\infty} \zeta_k 1_{[\tau_k, \tau_{k+1})}(t), \; t \in [0,T]\;,$
where $T$ is a terminal time. To formalise the problem, we assume that we are working on a complete probability space $(\Omega,\cA,\P)$ supporting a Brownian Motion $W$. The stopping times are defined with respect to the filtration $(\cF_t)_{t \ge 0}$ generated by this Brownian motion.
Denoting by $f(t,i)$ the instantaneous profit received at time $t$ in mode $i$, the time cumulated profit associated to a switching strategy is given by $\int_{0}^Tf(t,a_t) \ud t - \sum_{k = 0}^{+\infty} c_{\zeta_k, \zeta_{k+1}} \ind{\tau_{k+1} \le t\wedge T} $. The management solves then  at the initial time the following control problem
\begin{align}\label{eq de classical switching problem}
\mathcal{V}_0 = \sup_{a \in \mathscr{A}} \esp{\int_{0}^Tf(t,a_t) \ud t - \sum_{k = 0}^{+\infty} c_{\zeta_k, \zeta_{k+1}} \ind{\tau_{k+1} \le T}}\,,
\end{align}
where $\mathscr{A}$ is a set of admissible strategies that will be precisely described below (see Section \ref{problem}).
We shall refer to problems of the form \eqref{eq de classical switching problem} under the name of \emph{classical switching problems}. These problems have received a lot of interest and are now quite well understood \textcolor{black}{\cite{HJ07,DHP09,HT10,CEK11}}. In our work, we introduce a new kind of switching problem, to model more realistic situations, by taking into account uncertainties that are encountered in practice.
Coming back to the simple but enlightning example of an electricity producer described in \cite{HJ07}, we introduce some extra-randomness in the production process. Namely, when switching to the operating mode, it may happen with --hopefully-- a small probability that the station will have some dysfunction.
This can be represented by a new mode of ``production'' with a greater switching cost than the business as usual one. To capture this phenomenon in our mathematical model, we introduce a randomisation procedure: the management  decides the time of switching but  the mode is chosen randomly according to some extra noise source. 
We shall refer to this kind of problem by \emph{randomised switching problem}. However, we do not limit our study to this framework. Indeed, we allow some control by the agent on this randomisation. Namely, the agent can chose optimally a probability distributions $P^u$ on the modes space given some parameter $u \in \cC$, in the control space. The new mode $\zeta_{k+1}$ is then drawn, independently of everything up to now, according to this distribution $P^u$ and a specific switching cost  $c^u_{\zeta_k,\zeta_{k+1}}$ is applied. The management strategy is thus given now by the sequence $(\tau_k,u_k)_{k \ge 0}$ of switching times and controls. The maximisation problem is still given by \eqref{eq de classical switching problem}. \textcolor{black}{Let us observe however 
that $\esp{c^{u_k}_{\zeta_k,\zeta_{k+1}}}=\esp{\sum_{1\leqslant j\leqslant d} P^{u_k}_{\zeta_k,j}c^{u_k}_{\zeta_k,j}}$, thanks to the tower property of conditional expectation. In particular, we will only work with the mean switching costs $\bar{c}^{u_k}_i:=\sum_{1\leqslant j\leqslant d} P^{u_k}_{i,j}c^{u_k}_{i,j}$ in \eqref{eq de classical switching problem}.} 
We name this kind of control problem \emph{switching problem with controlled randomisation}. 
Although their apparent modeling power, this kind of control problem has not been considered in the literature before, to the best of our knowledge. In particular, we will show that the \emph{classical} or \emph{randomised switching problem} are just special instances of this more generic problem. The \emph{switching problem with controlled randomisation} is introduced rigorously in Section \ref{problem} below.

A key point in our work is to relate the control problem under consideration to a new class of obliquely reflected Backward Stochastic Differential Equations (BSDEs). In the first part, following the approach of \textcolor{black}{\cite{HJ07,DHP09,HT10}}, we completely solve the \emph{switching problem with controlled randomisation} by providing an optimal strategy. The optimal strategy is built by using the solution to a well chosen obliquely reflected BSDE. Although this approach is not new, the link between the obliquely reflected BSDE and the switching problem is more subtle than in the classical case due to the state uncertainty. In particular, some care must be taken when defining the adaptedness property of the strategy and associated quantities. Indeed, a tailor-made filtration,  studied in details in Appendix A.2, is associated to each admissible strategy. The state and cumulative cost processes are adapted to this filtration, and the associated reward process is defined as the $Y$-component of the solution to some ``switched'' BSDE in this filtration.
The classical estimates used to identify an optimal strategy have to be adapted to take into account the extra orthogonal martingale arising when solving this ``switched''  BSDE in a non Brownian filtration. 

\noindent In the second part of our work, we study \textcolor{black}{the auxiliary obliquely reflected BSDE, which is written in the Brownian filtration and represents the optimal value in all the possible starting modes.}
Reflected BSDEs were first considered by Gegout-Petit and Pardoux \cite{GPP96}, in a multidimensional setting of normal reflections. In one dimension, they have also been studied in \cite{EKPPQ97} in the  so called simply reflected case, and in \cite{CK96} in the doubly reflected case. The multidimensional RBSDE associated to the \emph{classical switching problem} is reflected in a specific convex domain and involves oblique directions of reflection. Due to the controlled randomisation, the domain in which the $Y$-component of the auxiliary RBSDE is constrained is different from the \emph{classical switching problem} domain and its shape varies a lot from one model specification to another. The existence of a solution to the obliquely reflected BSDE has thus to be studied carefully. We do so by relying on  the article \cite{chassagneux2018obliquely}, that studies, in a generic way, the obliquely reflected BSDE in a fixed convex domain in both Markovian and non-Markovian setting. The main step for us here is  to exhibit an oblique reflection operator, with the good 
properties  to use the results in \cite{chassagneux2018obliquely}. We are able to obtain new existence results for this class of obliquely reflected BSDEs. Because we are primarily interested in solving the control problem, we derive the uniqueness of the obliquely reflected BSDEs in the Hu and Tang specification for the driver \cite{HT10}, namely $f^{i}(t,y,z) := f^{i}(t,y^i,z^i)$ for $i \in \set{1,\dots,d}$.
But our results can be easily generalized to the specification $f^{i}(t,y,z) := f^{i}(t,y,z^i)$ by using similar arguments as in \cite{CEK10}. 

The rest of the paper is organised as follows. In Section \ref{game}, we introduce the  \emph{switching problem with controlled randomisation}.
We prove that, if there exists a solution to the associated BSDE with oblique reflections, then its  $Y$-component coincides with the value of the switching problem. A verification argument allows then to deduce uniqueness of the solution of the obliquely reflected BSDE. In Section \ref{existence}, we show that there exists indeed a solution to the obliquely reflected BSDE under some conditions on the parameters of the switching problem and its randomisation. We also prove uniqueness of the solution under some structural condition on the driver $f$.
Finally, we gather in the Appendix section some technical results.

\paragraph{Notations}
If $n \ge 1$, we let $\cB^n$ be the Borelian sigma-algebra on $\R^n$.\\
For any filtered probability space $(\Omega,\cG,\F,\P)$ and constants $T > 0$ and $p \ge 1$, we define the following spaces:
\begin{itemize}
\item $L^p_n(\cG)$ is the set of $\cG$-measurable random variables $X$ valued in $\R^n$ satisfying $\esp{|X|^p} < +\infty$,
\item $\cP(\F)$ is the predictable sigma-algebra on $\Omega \times [0,T]$,
\item $\H^p_n(\F)$ is the set of predictable processes $\phi$ valued in $\R^n$ such that
  \begin{align}
    \esp{\int_0^T |\phi_t|^p \ud t} < +\infty,
  \end{align}
\item $\mathbb S^p_n(\F)$ is the set of \textcolor{black}{c\`adl\`ag} processes $\phi$ valued in $\R^n$ such that
  \begin{align}
    \esp{\sup_{0 \le t \le T} |\phi_t|^p} < +\infty,
  \end{align}
\item $\A^p_n(\F)$ is the set of continuous processes $\phi$ valued in $\R^n$ such that $\phi_T \in L^p_n(\cF_T)$ and $\phi^i$ is nondecreasing for all $i = 1, \dots, n$.
\end{itemize}
If $n = 1$, we omit the subscript $n$ in previous notations.\\[.3cm]
For $d \ge 1$, we denote by $(e_i)_{i=1}^d$ the canonical basis of $\R^d$ and $S_d(\R)$ the set of symmetric matrices of size $d \times d$ with real coefficients.\\[.3cm]
If $\cD$ is a convex subset of $\R^d$ ($d \ge 1$) and $y \in \cD$, we denote by $\cC(y)$ the outward normal cone at $y$, defined by
\begin{align}
  \cC(y) := \{ v \in \R^d : v^\top (z - y) \le 0 \mbox{ for all } z \in \cD \}.
\end{align}
We also set $\fn(y) := \cC(y) \cap \{v \in \R^d : |v| = 1\}$.\\[.3cm]
If $X$ is a matrix of size $n \times m$, $\cI \subset \{1,\dots,n\}$ and $\cJ \subset \{1,\dots,m\}$, we set $X^{(\cI,\cJ)}$ the matrix of size $(n - |\cI|) \times (m - |\cJ|)$ obtained from $X$ by deleting rows with index $i \in \cI$ and columns with index $j \in \cJ$. If $\cI = \{i\}$ we set $X^{(i,\cJ)} := X^{(\cI,\cJ)}$, and similarly if $\cJ = \{j\}$.\\
If $v$ is a vector of size $n$ and $1 \le i \le n$, we set $v^{(i)}$ the vector of size $n-1$ obtained from $v$ by deleting coefficient $i$.

\newcommand{\ls}[2]{\ensuremath{#1^{(#2)} }}
\newcommand{\rs}[2]{\ensuremath{#1+ \1_{\set{#1\ge #2}} }}

\noindent For $(i,j) \in \set{1,\dots,d}$, we define
$ \ls{i}{j}:= i - \1_{\set{i>j}} \in \set{1,\dots,d-1}\;$, for $d\ge 2$.
\\
We denote by $\succcurlyeq$ the component by component partial ordering relation on vectors and matrices.

\section{Switching problem with controlled randomisation} \label{game}
\definecolor{green}{rgb}{0.0, 0.65, 0.31}
We introduce here a new kind of stochastic control problem that we name \emph{switching problem with controlled randomisation}. In contrast with the usual switching problems \cite{HJ07,HZ10,HT10}, the agent cannot choose directly the new state, but chooses a probability distribution under which the new state will be determined. In this section, we assume the existence of a solution to some auxiliary obliquely reflected BSDE to characterize the value process and an optimal strategy for the problem, see Assumption \ref{exist} below.

\vspace{2mm}
 Let $(\Omega, \cG, \P)$ be a probability space. We fix a finite time horizon $T > 0$ and $\kappa \ge 1,d \ge 2$ two integers.
We assume that there exists a $\kappa$-dimensional Brownian motion $W$ and a sequence $(\mathfrak{U}_n)_{n \ge 1}$ of independent random variables, independent of $W$, uniformly distributed on $[0,1]$. We also assume that $\cG$ is generated by the Brownian motion $W$ and the family $(\mathfrak{U}_n)_{n \ge 1}$.
We define $\F^0 = (\cF^0_t)_{t \ge 0}$ as the augmented Brownian filtration, which satisfies the usual conditions.\\
Let $\cC$ be an ordered compact metric space and $F : \cC \times \{1,\dots,d\} \times [0,1] \to \{1,\dots,d\}$ a measurable map.
To each $u \in \cC$ is associated a transition probability function on the state space $\{1, \dots, d\}$, given by $P^u_{i,j} := \P(F(u,i,\mathfrak{U}) = j)$ for $\mathfrak{U}$ uniformly distributed on $[0,1]$. {We assume that for all $(i,j) \in \{1,\dots,d\}^2$, the map $u \mapsto P^u_{i,j}$ is continuous.}\\
Let $\bar{c} : \{1,\dots,d\} \times \cC \to \R_+, (i, u) \mapsto \bar{c}_{i}^{u}$ a map such that $u \mapsto \bar{c}^u_{i}$ is continuous for all $i = 1, \dots, d$. 
We denote $\sup_{i \in \{1,\dots,d\}, u \in \cC} \bar{c}_{i}^{u} := \check{c}$ and $\inf_{i \in \{1,\dots,d\}, u \in \cC} \bar{c}_{i}^{u} := \hat{c}$.\\ 
Let $\xi = (\xi^1, \dots, \xi^d) \in L^2_d(\cF^0_T)$ and $f : \Omega \times [0,T] \times \R^d \times \R^{d \times \kappa} \to \R^d$ a map satisfying
\begin{itemize}
\item $f$ is $\cP(\F^0) \otimes \cB^{d} \otimes \cB^{d\times \kappa}$-measurable and $f(\cdot,0,0) \in \H^2_d(\F^0)$.
\item There exists $L \ge 0$ such that, for all $(t,y,y',z,z') \in [0,T] \times \R^d \times \R^d \times \R^{d \times \kappa} \times \R^{d \times \kappa}$, 
  \begin{align*} 
    |f(t,y,z)-f(t,y',z')| \le L(|y-y'|+|z-z'|).
  \end{align*}
\end{itemize}
These assumptions will be in force throughout our work. We shall also use, in this section only, the following additional assumptions.
{
\begin{Assumption}
 \label{hyp sup section 2}
 \begin{enumerate}[i)]
  \item  Switching costs are assumed to be positive, i.e. $\hat{c}>0$.
  \item For all $(t,y,z) \in [0,T]\times\R^d\times\R^{d\times \kappa}$, it holds almost surely,
  \begin{align} \label{hyp plus simple sur f}
    f(t,y,z) = (f^i(t,y^i,z^i))_{1\le i \le d}.
  \end{align}
 \end{enumerate}
\end{Assumption}
\begin{Remark} \label{re positive cost}
i) \textcolor{black}{ It is usual to assume  positive costs in the litterature on switching problem. In particular, the cumulative cost process, see \eqref{definition A et N}, is non decreasing.
Introducing signed costs adds extra technical difficulties in the proof of the representation theorem (see e.g. \cite{Martyr-16} and references therein). We postpone the adaptation of our results in this more general framework to future works.}
\\
ii) 
 Assumption \eqref{hyp plus simple sur f} is also classical since it allows to get a comparison result for BSDEs which is key to obtain the representation theorem. Note however than our results can be generalized to the case $f^i(t,y,z)=f^i(t,y,z^i)$ for $i \in \{1,...,d\}$ by using similar arguments as in \cite{CEK11}.
\end{Remark}
}

\subsection{Solving the control problem using obliquely reflected BSDEs} \label{problem}

We define in this section the stochastic optimal control problem. We first introduce the strategies available to the agent  and  related processes. The definition of the strategy is more involved than in the usual switching problem setting since its adaptiveness property is understood with respect to a filtration built recursively.

\vspace{2mm}
A strategy is thus given by $\phi = \left(\zeta_0, (\tau_n)_{n \ge 0}, (\alpha_n)_{n \ge 1}\right)$ where $\zeta_0 \in \{1,\dots,d\}$, $(\tau_n)_{n \ge 0}$ is a nondecreasing sequence of random times and $(\alpha_n)_{n \ge 1}$ is a sequence of $\cC$-valued random variables, which satisfy:
\begin{itemize}
\item $\tau_0 \in [0,T]$ and $\zeta_0 \in \{1, \dots, d\}$ are deterministic.
\item For all $n \ge 0$, $\tau_{n+1}$ is a $\F^n$-stopping time and $\alpha_{n+1}$ is $\cF^n_{\tau_{n+1}}$-measurable (recall that $\F^0$ is the augmented Brownian filtration). We then set $\F^{n+1} = (\cF^{n+1}_t)_{t \ge 0}$ with $\cF^{n+1}_t := \cF^n_t \vee \sigma(\mathfrak{U}_{n+1} \ind{\tau_{n+1} \le t})$.
\end{itemize}
Lastly, we define $\F^\infty = (\cF^\infty_t)_{t \ge 0}$ with $\cF^\infty_t := \bigvee_{n \ge 0} \cF^n_t, t \ge 0$.
\\
For a strategy $\phi = \left(\zeta_0, (\tau_n)_{n \ge 0}, (\alpha_n)_{n \ge 1}\right)$, we set, for  $n \ge 0$, 
\begin{align*}
 \zeta_{n+1} := F(\alpha_{n+1}, \zeta_n, \mathfrak{U}_{n+1}) \text{ and } a_t := \sum_{k = 0}^{+\infty} \zeta_k 1_{[\tau_k, \tau_{k+1})}(t), \; t \ge 0\;,
\end{align*}
which represents the state after a switch and the state process respectively.
We also introduce two processes, for $t \ge 0$,
\begin{align} \label{definition A et N}
A^\phi_t = \sum_{k = 0}^{+\infty} \bar{c}_{\zeta_k}^{\alpha_{k+1}} \ind{\tau_{k+1} \le t} \text{ and } N^\phi_t := \sum_{k \ge 0} \ind{\tau_{k+1} \le t}.
\end{align}
The random variable $A^\phi_t$ is the cumulative cost  up to time $t$
and  $N^\phi_t$ is the number of switches before time $t$. 
Notice that the processes $(a,A^\phi,N^\phi)$ are adapted to $\F^\infty$ \textcolor{black}{and that $A^\phi$ is a non decreasing process.}

\noindent We say that a strategy $\phi = (\zeta_0, (\tau_n)_{n \ge 0}, (\alpha_n)_{n \ge 1})$ is an \emph{admissible strategy} if the cumulative cost process satisfies 
\begin{align}\label{eq strategy admissible}
A^\phi_T - A^\phi_{\tau_0} \in L^2(\F^\infty_T) \quad\text{ and }\quad \espcond{\left(A^\phi_{\tau_0}\right)^2}{\cF^0_{\tau_0}} < +\infty \;a.s.
\end{align}
We denote by $\mathscr A$ the set of admissible strategies, and for $t \in [0,T]$ and $i \in \{1,\dots,d\}$, we denote by $\mathscr A^i_t$ the subset of admissible strategies satisfying $\zeta_0 = i$ and $\tau_0 = t$.

\vspace{2mm}
\begin{Remark}
\begin{enumerate}[i)]
 \item \textcolor{black}{The definition of an \emph{admissible strategy} is slightly weaker  than usual \cite{HT10}, which requires the stronger property $A^\phi_T \in L^2(\cF^\infty_T)$. But, importantly, the above definition is enough to define the \emph{switched} BSDE associated to an admissible control, see below. Moreover, we observe in the next section that optimal strategies are admissible with respect to our definition, but not necessarily with the usual one, due to possible simultaneous jumps at the initial time.}
\item  {For technical reasons involving possible simultaneous jumps, we cannot consider the generated filtration associated to $a$, which is contained in $\F^\infty$.
}
\end{enumerate}
\end{Remark}

\vspace{2mm}
\noindent We are now in position to introduce the reward associated to an admissible strategy. If $\phi = (\zeta_0,(\tau_n)_{n \ge 0},(\alpha_n)_{n\ge 1}) \in \mathscr A$, the reward is defined as the value $\espcond{U^\phi_{\tau_0}-A^\phi_{\tau_0}}{\cF^0_{\tau_0}}$, where $(U^\phi,V^\phi,M^\phi) \in \mathbb S^2(\F^\infty) \times \H^2_{\kappa}(\F^\infty) \times \H^2(\F^\infty)$ is the solution of the following ``switched'' BSDE \cite{HT10} on the filtered probability space $(\Omega,\cG,\F^\infty,\P)$:
\begin{align} \label{switched BSDE}
  U_t = \xi^{a_T} + \int_t^T f^{a_s}(s, U_s, V_s) \ud s - \int_t^T V_s \ud W_s - \int_t^T \ud M_s - \int_t^T \ud A^\phi_s, \quad t \in [\tau_0,T].
\end{align}
\begin{Remark}This switched BSDE rewrites as a classical BSDE in $\F^\infty$, and since $A^\phi_{\cdot} - A^\phi_t \in \mathbb S^2(\F^\infty)$, the terminal condition and the driver are standard parameters,  there exists a unique solution to \eqref{switched BSDE} for all $\phi \in \mathscr A$. We refer to Section \ref{sec BSDE} for more details.
\end{Remark}
\vspace{2mm}
\noindent For $t \in [0,T]$ and $i \in \{1,\dots,d\}$, the agent aims thus to solve the following maximisation problem:
\begin{align}
\mathcal V^i_t = \esssup_{\phi \in \mathscr A^i_t} \espcond{U^\phi_t - A^\phi_t}{\cF^0_t}.
\end{align}
\textcolor{black}{We first remark that this control problem corresponds to \eqref{eq de classical switching problem} as soon as $f$ does not depend on $y$ and $z$. Moreover, the term $ \espcond{A^\phi_t}{\cF^0_t}$ is non zero if and only if we have at least one instantaneous switch at initial time $t$. 
}

\noindent The main result of this section is the next theorem that relates the value process $\cV$ to the solution of an obliquely reflected BSDEs, that is introduced in the following assumption:
\begin{Assumption} \label{exist}
  \begin{enumerate}[i)]
  \item There exists a solution $(Y,Z,K) \in \mathbb S^2_d(\F^0) \times \H^2_{d \times \kappa}(\F^0) \times \A^2_d(\F^0)$ to the following obliquely reflected BSDE:
  \begin{align}
    \label{orbsde} Y^i_t &= \xi + \int_t^T f^i(s,Y^i_s,Z^i_s) \ud s - \int_t^T Z^i_s \ud W_s + \int_t^T \ud K^i_s, \quad t \in [0,T], \, i \in \cI,\\
    \label{orbsde2} Y_t &\in \cD, \quad t \in [0,T],\\
    \label{orbsde3} \int_0^T &\left(Y^i_t - \sup_{u \in \cC} \left\{ \sum_{j=1}^d P^u_{i,j} Y^j_t - \bar{c}_{i}^{u}\right\}\right) \ud K^i_t = 0, \quad i \in \cI,
  \end{align}
  where $\cI := \{1,\dots,d\}$ and $\cD$ is the following convex subset of $\R^d$:
  \begin{align}
    \cD := \left\{ y \in \R^d : y_i \ge \sup_{u \in \cC} \left\{ \sum_{j=1}^d P^u_{i,j} y_j - \bar{c}_{i}^{u}\right\}, i \in \cI \right\}. \label{domain}
  \end{align}
\item For all $u \in \cC$ and $i \in \{1,\dots,d\}$, we have $P^u_{i,i} \neq 1$.
\end{enumerate}
\end{Assumption}
\textcolor{black}{Let us observe that the positive costs assumption implies that $\cD$ has a non-empty interior. Except for Section \ref{section unicite}, this is the main setting for this part, recall Remark \ref{re positive cost}. In Section \ref{existence}, the system \eqref{orbsde}-\eqref{orbsde2}-\eqref{orbsde3} is studied in details in a general costs setting. An important step is then to understand when $\cD$ has non-empty interior.}
\vspace{2mm}

\begin{Theorem}\label{th representation result}
Assume that Assumptions \ref{hyp sup section 2} and \ref{exist} are satisfied.
  \begin{enumerate}
  \item For all $i \in \{1,\dots,d\}$, $t \in [0,T]$ and $\phi \in \mathscr A^i_t$, we have $Y^i_t \ge \espcond{U^\phi_t - A^\phi_t}{\cF^0_t}$.
  \item We have $Y^i_t = \espcond{U^{\phi^\star}_t - A^{\phi^\star}_t}{\cF^0_t}$, where $\phi^\star = (i,(\tau^\star_n)_{n \ge 0}, (\alpha^\star_n)_{n \ge 1})\in \mathscr A^i_t$ is defined in \eqref{def tau star}-\eqref{def alpha star}.
  \end{enumerate}
\end{Theorem}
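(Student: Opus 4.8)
The plan is to prove this classical representation result by the usual two-step comparison argument, adapted to handle the controlled-randomisation structure and the fact that the switched BSDE \eqref{switched BSDE} lives in the non-Brownian filtration $\F^\infty$.

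\textbf{Step 1: the inequality $Y^i_t \ge \espcond{U^\phi_t - A^\phi_t}{\cF^0_t}$ for an arbitrary admissible strategy.}
Fix $i$, $t=\tau_0$ and $\phi=(\zeta_0,(\tau_n),(\alpha_n)) \in \mathscr A^i_t$ with $\zeta_0=i$. The idea is to ``switch'' the solution $(Y,Z,K)$ of the obliquely reflected BSDE along the strategy $\phi$: define the càdlàg process $\tilde Y_s := Y^{a_s}_s$ and similarly $\tilde Z_s := Z^{a_s}_s$ for $s \in [\tau_0,T]$. On each stochastic interval $[\tau_n,\tau_{n+1})$, $\tilde Y$ follows the dynamics of $Y^{\zeta_n}$, so using \eqref{orbsde} on each such interval and summing, $\tilde Y$ solves, in $\F^\infty$, a BSDE with terminal value $\xi^{a_T}$, driver $f^{a_s}(s,\tilde Y_s,\tilde Z_s)$, a martingale part (the $Z\,\ud W$ term plus an orthogonal martingale $\tilde M$ coming from the filtration enlargement and from the jumps of $a$ at the switching times), the nondecreasing term $-\int \ud \tilde K$ with $\tilde K$ built from the $K^{\zeta_n}$, and a jump contribution at each $\tau_{n+1}$ equal to $Y^{\zeta_{n+1}}_{\tau_{n+1}} - Y^{\zeta_n}_{\tau_{n+1}}$. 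Now the constraint \eqref{orbsde2} gives $Y^{\zeta_n}_{\tau_{n+1}} \ge \sum_j P^{\alpha_{n+1}}_{\zeta_n,j} Y^j_{\tau_{n+1}} - \bar c^{\alpha_{n+1}}_{\zeta_n}$, and since $\zeta_{n+1}=F(\alpha_{n+1},\zeta_n,\mathfrak U_{n+1})$ is drawn according to $P^{\alpha_{n+1}}_{\zeta_n,\cdot}$ independently of $\cF^n_{\tau_{n+1}}$, taking conditional expectation given $\cF^n_{\tau_{n+1}}$ of the jump term shows that $\tilde Y + \tilde A^{\,\text{switched}}$ is, after compensation, a supermartingale-type object dominating $U^\phi + A^\phi$. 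More precisely, I would compare $\tilde Y$ with $U^\phi$ via the linearisation of the Lipschitz drivers $f^{a_s}$ (they coincide up to the Lipschitz bound; actually here both processes carry the same driver $f^{a_s}$ but at different arguments), set $\delta := \tilde Y - U^\phi$, write its linear BSDE, and note that the terminal condition of $\delta$ is $0$, the reflection term $\int \ud \tilde K$ pushes upward, the cost terms cancel (both $\tilde Y$ and $U^\phi$ are driven by $-\ud A^\phi$ once switched — in fact I should set things up so that $\tilde Y$ also absorbs $-\ud A^\phi$ at the switching times via the inequality above), so that $\delta_{\tau_0} \ge 0$ after taking conditional expectation, using that the local-martingale parts are true martingales thanks to the $\mathbb S^2/\H^2$ integrability of all quantities and \eqref{eq strategy admissible}. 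Concluding, $Y^i_t = \tilde Y_{\tau_0} \ge U^\phi_{\tau_0} - (A^\phi_{\tau_0} - A^\phi_{\tau_0})$... one has to be careful with the initial jump; taking $\espcond{\cdot}{\cF^0_t}$ and using $\espcond{(A^\phi_{\tau_0})^2}{\cF^0_{\tau_0}}<\infty$ yields $Y^i_t \ge \espcond{U^\phi_t - A^\phi_t}{\cF^0_t}$.

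\textbf{Step 2: equality for the candidate optimal strategy $\phi^\star$.}
The optimal strategy $\phi^\star$ is the one that makes all the inequalities in Step 1 tight: $\tau^\star_{n+1}$ is the first time after $\tau^\star_n$ at which $Y^{\zeta^\star_n}_s$ touches the obstacle $\sup_{u\in\cC}\{\sum_j P^u_{\zeta^\star_n,j}Y^j_s - \bar c^u_{\zeta^\star_n}\}$ (so that $\int \ud K^{\zeta^\star_n}$ is flat strictly before $\tau^\star_{n+1}$, by \eqref{orbsde3}), and $\alpha^\star_{n+1}$ is a measurable selector of the (nonempty, by compactness of $\cC$ and continuity of $u\mapsto P^u_{i,j},\bar c^u_i$) $\argmax$ in that supremum evaluated at $s=\tau^\star_{n+1}$; this is exactly the content of the definitions \eqref{def tau star}–\eqref{def alpha star} referenced in the statement. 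With these choices, the jump of $\tilde Y$ at $\tau^\star_{n+1}$ conditionally averages to exactly $-\bar c^{\alpha^\star_{n+1}}_{\zeta^\star_n}$, i.e. exactly the increment of $A^{\phi^\star}$, and the reflection term $\int \ud \tilde K$ contributes nothing before $T$; hence $\delta = \tilde Y - U^{\phi^\star}$ is a genuine (square-integrable) martingale null at $T$, giving $\tilde Y_{\tau^\star_0} = U^{\phi^\star}_{\tau^\star_0}$ and then, after conditioning, $Y^i_t = \espcond{U^{\phi^\star}_t - A^{\phi^\star}_t}{\cF^0_t}$. I must also verify $\phi^\star \in \mathscr A^i_t$, i.e. that $A^{\phi^\star}$ satisfies \eqref{eq strategy admissible}: this is where Assumption \ref{exist}(ii), $P^u_{i,i}\ne 1$, together with positivity of costs $\hat c>0$ enters, to control the number of switches $N^{\phi^\star}$; the key point is that two consecutive switches cannot both have the state stay put with zero cost, and $Y\in\mathbb S^2$ bounds the total ``reflection budget'' $\sum \bar c^{\alpha^\star_{n+1}}_{\zeta^\star_n}$ in $L^2$, possibly modulo the cluster of instantaneous switches at the initial time $t$ which is precisely why the weaker admissibility \eqref{eq strategy admissible} (rather than $A^\phi_T\in L^2$) was adopted.

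\textbf{Main obstacle.}
The delicate part is Step 2, specifically (a) the measurable construction of $\phi^\star$ and the proof that the recursively defined stopping times $\tau^\star_n$ do not accumulate before $T$, so that $a^{\phi^\star}$, $A^{\phi^\star}$ are well defined and $\phi^\star$ is admissible — this requires a careful argument combining $P^u_{i,i}\ne 1$, $\hat c>0$, continuity/compactness for the selector, and the $\mathbb S^2$-bound on $Y$; and (b) bookkeeping the orthogonal martingale $\tilde M$ and the jumps of the switched process across the growing filtration $\F^\infty$, making sure all the stochastic integrals appearing are true martingales so that conditional expectations are legitimate. The driver comparison itself and the handling of the obstacle inequality via \eqref{orbsde2}–\eqref{orbsde3} are routine once the filtration and admissibility issues are settled; these last issues are what make the controlled-randomisation version genuinely more subtle than the classical switching case, as flagged in the introduction.
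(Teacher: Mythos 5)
Your architecture is the paper's own: switch $(Y,Z,K)$ along the strategy, split each jump $Y^{\zeta_{k+1}}_{\tau_{k+1}}-Y^{\zeta_k}_{\tau_{k+1}}$ into an orthogonal martingale increment, the nonnegative obstacle slack and the cost increment, invoke a comparison theorem in $\F^\infty$ for part 1, and note that $\phi^\star$ saturates both the obstacle inequality and the Skorokhod condition for part 2. Two of the points you defer are, however, genuine gaps rather than bookkeeping. The most serious concerns the admissibility of $\phi^\star$, specifically $\espcond{(A^{\phi^\star}_t)^2}{\cF^0_t}<\infty$, i.e. the cluster of instantaneous switches at the initial time. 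Your stated key point --- that two consecutive switches cannot both leave the state unchanged at zero cost --- does not rule out the chain of instantaneous switches cycling forever among the indices $j$ for which $Y^j_t$ sits on the obstacle; since each such switch costs at least $\hat{c}>0$, what must be shown is that the random number $N^{\phi^\star}_t$ of instantaneous switches has finite second moment. The paper's Lemma \ref{lem sortie bord} does this by viewing the successive instantaneous states as a Markov chain on the saturated index set $S(Y_t)\cup\{0\}$ and proving it is absorbing: if a recurrent class stayed inside $S(Y_t)$, the saturation equalities would confine $\cD$ to a set with empty interior (via the geometric results of Proposition \ref{pr necessary conditions}), contradicting the non-empty interior implied by $\hat{c}>0$ and Assumption \ref{exist}-ii); square integrability of the absorption time then follows from the standard theory of absorbing chains. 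Nothing in your sketch produces this argument, and neither $P^u_{i,i}\neq 1$ nor $\hat{c}>0$ alone yields it.

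The second gap is in Step 1: you assert that the local-martingale parts are true square-integrable martingales ``thanks to the $\mathbb S^2/\H^2$ integrability of all quantities''. That is not automatic. The orthogonal martingale $\cM^\phi$ and the slack process $\cA^\phi$ are sums over possibly infinitely many switching times, and their square integrability is precisely the content of the paper's Lemma \ref{lem estimes startegies gene}, proved by truncating to $n$ switches, applying It\^o's formula, using Young's inequality with a small parameter to absorb $\esp{\int\ud[\cM^n]_s}$ back into the left-hand side (this is where the admissibility bound \eqref{eq strategy admissible} on $A^\phi_T-A^\phi_{\tau_0}$ enters), and passing to the limit by monotone convergence; the companion estimate \eqref{mart en t} is also needed before you may condition on $\cF^0_t$ in the final step of part 1. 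You correctly flag both issues as the main obstacles, but flagging them is not resolving them, and the absorbing-chain argument in particular is an idea your proposal does not contain.
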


\vspace{2mm}
\noindent The proof is given at the end of \textcolor{black}{Section \ref{sous section preuve th representation}}. It will use several lemmata that we introduce below.
We first remark, that as an immediate consequence, we obtain the uniqueness of the BSDE used to characterize the value process of the control problem.

\begin{Corollary} \label{co uniqueness RBSDE}
  Under Assumptions \ref{hyp sup section 2} and \ref{exist}, there exists a unique solution $(Y,Z,K) \in \mathbb S^2_d(\F^0) \times \H^2_{d\times \kappa}(\F^0) \times \A^2_d(\F^0)$ to the obliquely reflected BSDE \eqref{orbsde}-\eqref{orbsde2}-\eqref{orbsde3}.
\end{Corollary}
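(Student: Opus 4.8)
The plan is to deduce the statement essentially for free from Theorem \ref{th representation result}. Existence is nothing but Assumption \ref{exist} i), which posits a solution $(Y,Z,K)$ in the required spaces, so the whole content is uniqueness. The key structural observation is that the control problem --- and in particular the value process $\cV^i_t := \esssup_{\phi \in \mathscr A^i_t} \espcond{U^\phi_t - A^\phi_t}{\cF^0_t}$ --- is defined solely from the data of the switching problem and makes no reference to any solution of \eqref{orbsde}--\eqref{orbsde3}. Theorem \ref{th representation result} identifies the $Y$-component of \emph{every} solution with this one fixed process, which is exactly what will force uniqueness.

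First I would make the identification $Y^i = \cV^i$ precise for an arbitrary solution. Let $(Y,Z,K)$ and $(\tilde Y, \tilde Z, \tilde K)$ be two solutions in $\mathbb S^2_d(\F^0) \times \H^2_{d\times\kappa}(\F^0) \times \A^2_d(\F^0)$. Item 1 of Theorem \ref{th representation result}, applied to $(Y,Z,K)$, gives $Y^i_t \ge \espcond{U^\phi_t - A^\phi_t}{\cF^0_t}$ a.s. for every $\phi \in \mathscr A^i_t$, so $Y^i_t$ dominates the essential supremum; item 2 exhibits an admissible $\phi^\star \in \mathscr A^i_t$ for which $Y^i_t = \espcond{U^{\phi^\star}_t - A^{\phi^\star}_t}{\cF^0_t}$, so the bound is attained and $Y^i_t = \cV^i_t$ a.s. Running the identical argument for $(\tilde Y, \tilde Z, \tilde K)$ yields $\tilde Y^i_t = \cV^i_t$ a.s. Hence $Y^i_t = \tilde Y^i_t$ a.s. for each $t$ and $i$; since both processes are continuous (in \eqref{orbsde} the driver integral and the stochastic integral are continuous, and $K, \tilde K \in \A^2_d$ are continuous), they coincide at all rational times a.s. and are therefore indistinguishable, i.e. $Y = \tilde Y$.

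It then remains to recover $Z$ and $K$ from $Y$, which is routine via uniqueness of the semimartingale decomposition. Reading \eqref{orbsde} in forward form, $Y^i$ is a continuous semimartingale whose continuous local-martingale part equals both $\int_0^\cdot Z^i_s \ud W_s$ and $\int_0^\cdot \tilde Z^i_s \ud W_s$; matching quadratic variations (Itô isometry) gives $Z^i = \tilde Z^i$ in $\H^2_{d\times\kappa}(\F^0)$, that is $\ud t \otimes \ud\P$-a.e. With $Y = \tilde Y$ and $Z = \tilde Z$ the driver terms agree, $f^i(s,Y^i_s,Z^i_s) = f^i(s,\tilde Y^i_s,\tilde Z^i_s)$, so equality of the finite-variation parts forces $K^i_t - K^i_0 = \tilde K^i_t - \tilde K^i_0$ for all $t$; under the standard normalisation $K_0 = \tilde K_0 = 0$ this yields $K = \tilde K$.

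Since Theorem \ref{th representation result} does all the heavy lifting, I do not expect a real obstacle here --- which is why the result is stated as an immediate corollary. The single point deserving attention is that the attaining strategy $\phi^\star$ of item 2 is constructed from the solution and hence differs a priori between $(Y,Z,K)$ and $(\tilde Y,\tilde Z,\tilde K)$; this is harmless, because item 1 already pins each $Y^i_t$ as an upper bound for the solution-independent value $\cV^i_t$, so both $Y^i$ and $\tilde Y^i$ are squeezed onto the same process regardless of which strategy attains the optimum.
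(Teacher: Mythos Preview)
Your argument is correct and matches the paper's approach: the corollary is stated there as an immediate consequence of Theorem \ref{th representation result}, with no separate proof given. You have simply spelled out the details --- that the value process $\cV^i_t$ is solution-independent, that Theorem \ref{th representation result} pins any solution's $Y$-component to it, and that $Z,K$ then follow from semimartingale decomposition --- exactly as intended.
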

 
 
\textcolor{black}{\begin{Remark}
 The \emph{classical switching problem} is an example of \emph{switching problem with controlled randomisation}. Indeed, we just have to consider $\cC=\{1,...,d-1\}$, 
 $$P_{i,j}^u= \begin{cases}
               1 & \text{ if } j-i=u \text{ mod } d,\\
               0 & \text{ otherwise.}
              \end{cases},\quad \forall u \in \cC, \, 1 \leqslant i,j \leqslant d$$
and
$$c_{i,j}= \begin{cases} \bar{c}_i^{j-i} & \text{ if } j>i\\
                          \bar{c}_i^{j-i+d} & \text{ if } j<i\\
                          0 & \text{ if } j=i.
                          \end{cases}\quad \forall u \in \cC, \, 1 \leqslant i,j \leqslant d.$$
We observe that, in this specific case,  there is no extra-randomness introduced at each switching time and so there is no need to consider an enlarged filtration.  In this setting, Theorem \ref{th representation result} is already known and Assumption \ref{exist} is fulfilled, see e.g. \cite{HZ10,HT10}.
\end{Remark}
}

\subsection{Uniqueness of solutions to reflected BSDEs with general costs}
\label{section unicite}
In this section, we extend the uniqueness result of Corollary \ref{co uniqueness RBSDE}.
Namely, we consider the case where $\inf_{1\le i \le d, u \in \cC} \bar{c}_i^u = \hat{c}$ can be nonpositive, meaning that only Assumptions \ref{hyp sup section 2}-ii) and \ref{exist} hold here. Assuming that $\cD$ has a non empty interior, we are then able to show uniqueness to \eqref{orbsde}-\eqref{orbsde2}-\eqref{orbsde3} in Proposition \ref{prop unicite couts generaux} below.

\vspace{2mm}
\noindent Fix $y^0$ in the interior of $\cD$. It is clear that for all $1\le i \le d$,
\begin{align*}
  y^0_i > \sup_{u \in \cC} \left\{ \sum_{j=1}^d P^u_{i,j} y^0_j - \bar{c}_{i}^u \right\}.
\end{align*}
We set, for all $1\le i \le d$ and $u \in \cC$,
\begin{align*}
  \tilde c_{i}^u := y^0_i - \sum_{j=1}^d P^u_{i,j} y^0_j + \bar{c}_{i}^u > 0,
\end{align*}
so that $\hat{\tilde c} := \inf_{1\le i \le d, u \in \cC} \tilde{c}_i^u > 0$ by compactness of $\cC$.
We also consider the following set
\begin{align*}
  \tilde \cD := \left\{ \tilde y \in \R^d : \tilde y_i \ge \sup_{u \in \cC} \left\{ \sum_{j=1}^d P^u_{i,j} \tilde y_j - \tilde c_{i}^u \right\}, 1 \le i \le d \right\}.
\end{align*}
\begin{Lemma} Assume that $\cD$ has a non empty interior. Then, 
  \begin{align*}
    \tilde \cD = \left\{ y - y^0 : y \in \cD \right\}.
  \end{align*}
\end{Lemma}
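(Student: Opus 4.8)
The plan is to prove the set equality $\tilde{\cD} = \{ y - y^0 : y \in \cD \}$ by a direct double inclusion, using the algebraic identity that defines $\tilde{c}_i^u$. The key observation is that the translation $y \mapsto y - y^0$ is designed precisely so that the defining inequalities of $\cD$ transform into the defining inequalities of $\tilde{\cD}$.

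First I would fix $1 \le i \le d$ and $u \in \cC$, and rewrite the constraint. Recall $\tilde{c}_i^u = y^0_i - \sum_{j=1}^d P^u_{i,j} y^0_j + \bar{c}_i^u$. Given $y \in \R^d$, set $\tilde{y} := y - y^0$. Then, using linearity,
\begin{align*}
  \tilde{y}_i - \sum_{j=1}^d P^u_{i,j} \tilde{y}_j + \tilde{c}_i^u
  &= \left(y_i - y^0_i\right) - \sum_{j=1}^d P^u_{i,j}\left(y_j - y^0_j\right) + y^0_i - \sum_{j=1}^d P^u_{i,j} y^0_j + \bar{c}_i^u \\
  &= y_i - \sum_{j=1}^d P^u_{i,j} y_j + \bar{c}_i^u.
\end{align*}
Hence the inequality $\tilde{y}_i \ge \sum_{j=1}^d P^u_{i,j}\tilde{y}_j - \tilde{c}_i^u$ holds if and only if $y_i \ge \sum_{j=1}^d P^u_{i,j} y_j - \bar{c}_i^u$. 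Taking the supremum over $u \in \cC$ (which is legitimate since the identity holds for every fixed $u$, so the two families of real numbers indexed by $u$ coincide), the constraint for index $i$ defining $\tilde{\cD}$ at $\tilde{y}$ is equivalent to the constraint for index $i$ defining $\cD$ at $y$. Since this holds for all $i$, we conclude $\tilde{y} \in \tilde{\cD} \iff y \in \cD$, which is exactly the claimed equality once we note that $y \mapsto y - y^0$ is a bijection of $\R^d$.

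I do not expect any real obstacle here: the lemma is essentially a bookkeeping statement verifying that the definition of $\tilde{c}_i^u$ was made correctly, and the only mild subtlety is to justify interchanging the equivalence with the supremum over $u$, which is immediate because the algebraic identity is valid uniformly in $u$. The hypothesis that $\cD$ has non-empty interior is used only to guarantee that a point $y^0$ in the interior exists (so that the $\tilde{c}_i^u$ are strictly positive and $\hat{\tilde c} > 0$), but it plays no role in the set-equality computation itself.
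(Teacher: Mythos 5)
Your proof is correct and follows essentially the same route as the paper: both arguments verify by direct computation that the defining inequality for index $i$ and control $u$ is preserved under the translation $y \mapsto y - y^0$, thanks to the definition of $\tilde{c}_i^u$, and conclude by the bijectivity of the translation. The paper writes the two inclusions separately while you phrase it as a single equivalence uniform in $u$, but this is only a presentational difference.
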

\begin{proof}
  If $y \in \cD$, let $\tilde y := y - y^0$. For $1 \le i \le d$ and $u \in \cC$, we have
  \begin{align*}
    \tilde y_i = y_i - y^0_i &\ge \sum_{j=1}^d P^u_{i,j} y_j - \bar{c}_{i}^u - y^0_i 
                             = \sum_{j=1}^d P^u_{i,j} (y_j - y^0_j) - (\bar{c}_{i}^u + y^0_i - \sum_{j=1}^d P^u_{i,j} y^0_j) \\
                             &= \sum_{j=1}^d P^u_{i,j} \tilde y_j - \tilde c_{i}^u,
  \end{align*}
  hence $\tilde y \in \tilde \cD$. \\
  Conversely, let $\tilde y \in \tilde \cD$ and let $y := \tilde y + y^0$. We can show by the same kind of calculation that 
  $y \in \cD$.
  \eproof
\end{proof}
\begin{Proposition}
\label{prop unicite couts generaux}
  Assume that $\cD$ has a non empty interior. Under Assumptions \ref{hyp sup section 2}-ii) and \ref{exist}-ii), there exists at most one solution to \eqref{orbsde}-\eqref{orbsde2}-\eqref{orbsde3} in  $\mathbb S^2_d(\F^0) \times \H^2_{d\times \kappa}(\F^0) \times \A^2_d(\F^0)$.
\end{Proposition}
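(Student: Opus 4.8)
The natural strategy is to reduce the general-cost uniqueness statement to the positive-cost case already settled in Corollary \ref{co uniqueness RBSDE}. The translation lemma just proved does exactly this on the level of domains: $\tilde\cD = \{y - y^0 : y \in \cD\}$, with translated costs $\tilde c_i^u > 0$ and $\hat{\tilde c} > 0$. So the first step is to check that this shift carries over to the BSDE system. Given a solution $(Y,Z,K) \in \mathbb S^2_d(\F^0) \times \H^2_{d\times\kappa}(\F^0) \times \A^2_d(\F^0)$ of \eqref{orbsde}-\eqref{orbsde2}-\eqref{orbsde3} with domain $\cD$, costs $\bar c$, driver $f$ and terminal data $\xi$, I would set $\tilde Y_t := Y_t - y^0$, $\tilde Z := Z$, $\tilde K := K$, and $\tilde\xi := \xi - y^0$, $\tilde f^i(t,y,z) := f^i(t, y + y^0_i, z)$. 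Then $\tilde f$ still satisfies the standing Lipschitz and measurability assumptions, $\tilde f(\cdot,0,0) \in \H^2_d(\F^0)$, and $\tilde\xi \in L^2_d(\cF^0_T)$. One checks directly that \eqref{orbsde} holds for $(\tilde Y, \tilde Z, \tilde K)$ with driver $\tilde f$ and terminal value $\tilde\xi$; that $\tilde Y_t \in \tilde\cD$ by the lemma; and that the Skorokhod condition \eqref{orbsde3} transforms into the corresponding one for $\tilde\cD$, because $\tilde Y^i_t - \sup_{u}\{\sum_j P^u_{i,j}\tilde Y^j_t - \tilde c_i^u\} = Y^i_t - \sup_u\{\sum_j P^u_{i,j} Y^j_t - \bar c_i^u\}$ pointwise — the constant $y^0_i$ and the term $\sum_j P^u_{i,j} y^0_j$ cancel exactly as in the lemma's computation, so the integrands against $\ud K^i = \ud\tilde K^i$ agree. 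Hence $(Y,Z,K) \mapsto (\tilde Y, \tilde Z, \tilde K)$ is a bijection between solution sets.

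The second step is to invoke the positive-cost uniqueness. The translated data $(\tilde\xi, \tilde f, \tilde c, \tilde\cD)$ satisfy Assumption \ref{hyp sup section 2}-ii) (structure of $\tilde f$ is inherited from that of $f$), Assumption \ref{hyp sup section 2}-i) (since $\hat{\tilde c} > 0$), and Assumption \ref{exist}-ii) (the transition probabilities $P^u_{i,j}$ are unchanged). Assumption \ref{exist}-i) — existence of a solution in the right spaces for the translated system — holds because we assumed a solution exists for the original system and the shift produces one for the translated system. Therefore Corollary \ref{co uniqueness RBSDE} applies to the translated system and gives at most one solution $(\tilde Y, \tilde Z, \tilde K)$; pulling back through the bijection yields at most one $(Y,Z,K)$, which is the claim.

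One point deserves care, and it is the main thing to verify rather than a genuine obstacle: Corollary \ref{co uniqueness RBSDE} is stated as a consequence of the representation Theorem \ref{th representation result}, whose proof uses the full Assumption \ref{hyp sup section 2} including positivity of the \emph{original} costs. After translation the costs $\tilde c$ are positive, but one should confirm that the representation argument, and in particular the construction of the switching control problem and the switched BSDEs attached to it, only ever uses the costs through the combination that is shift-invariant, or equivalently that the control problem associated with $(\tilde\xi, \tilde f, \tilde c)$ is well posed under $\hat{\tilde c} > 0$ exactly as required by the hypotheses of Theorem \ref{th representation result}. This is immediate since $\hat{\tilde c} > 0$ is precisely Assumption \ref{hyp sup section 2}-i) for the translated data. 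Thus the proof is essentially: translate by $y^0$, apply the lemma to identify domains and Skorokhod conditions, and conclude by Corollary \ref{co uniqueness RBSDE}. I would also remark that this is why the non-empty interior hypothesis on $\cD$ is exactly the right one — it is precisely what makes a valid center $y^0$ available so that the translated costs become strictly positive by compactness of $\cC$.
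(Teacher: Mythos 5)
Your proposal is correct and follows essentially the same route as the paper: translate by an interior point $y^0$, use the translation lemma to identify $\tilde\cD$ with $\cD - y^0$ and check that the shifted triple solves the translated system with strictly positive costs $\hat{\tilde c}>0$, then conclude by Corollary \ref{co uniqueness RBSDE}. The additional verifications you spell out (invariance of the Skorokhod integrand under the shift, and that the translated data satisfy all hypotheses of the corollary) are exactly the points the paper leaves to the reader with ``one checks easily.''
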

\begin{proof}
  Let us assume that $(Y^1,Z^1,K^1)$ and $(Y^2,Z^2,K^2)$ are two solutions to \eqref{orbsde}-\eqref{orbsde2}-\eqref{orbsde3}. We set $\tilde Y^1 := Y^1 - y_0$ and $\tilde Y^2 := Y^2 - y_0$. Then one checks easily that $(\tilde Y^1, Z^1, K^1)$ and $(\tilde Y^2, Z^2, K^2)$ are solutions to \eqref{orbsde}-\eqref{orbsde2}-\eqref{orbsde3} with terminal condition $\tilde \xi = \xi - y_0$, driver $\tilde f$ given by
  \begin{align*}
    \tilde f^i(t,\tilde y_i,z_i) := f^i(t,\tilde y_i + y^0_i, z_i),\quad 1 \le i \le d,\, t \in [0,T],\, \tilde y \in \R^d,\, z \in \R^{d \times \kappa},
  \end{align*}
  and domain $\tilde \cD$. This domain is associated to a randomised switching problem with $\hat{\tilde c} > 0$, hence Corollary \ref{co uniqueness RBSDE} gives that $(\tilde Y^1, Z^1, K^1) = (\tilde Y^2, Z^2, K^2)$ which  implies the uniqueness.
  \eproof
\end{proof}

\textcolor{black}{\subsection{Proof of the representation result}
\label{sous section preuve th representation}}

\noindent We prove here our main result for this part, namely Theorem \ref{th representation result}. It is divided in several steps.

\subsubsection{Preliminary estimates}

We first introduce auxiliary processes associated to an admissible strategy and prove some key integrability properties.

\vspace{2mm}

Let $i \in \{1,\dots,d\}$ and $t \in [0,T]$. We set, for $\phi \in \mathscr A^i_t$ and $t \le s \le T$,
\begin{align}
  \cY^\phi_s &:= \sum_{k \ge 0} Y^{\zeta_k}_s 1_{[\tau_k, \tau_{k+1})}(s), \\
  \cZ^\phi_s &:= \sum_{k \ge 0} Z^{\zeta_k}_s 1_{[\tau_k, \tau_{k+1})}(s), \\
  \cK^\phi_s &:= \sum_{k \ge 0} K^{\zeta_k}_s 1_{[\tau_k, \tau_{k+1})}(s), \\
  \cM^\phi_s &:= \sum_{k \ge 0} \left(Y^{\zeta_{k+1}}_{\tau_{k+1}} - \espcond{Y^{\zeta_{k+1}}_{\tau_{k+1}}}{\cF^k_{\tau_{k+1}}} \right) \ind{t < \tau_{k+1} \le s}, \\
  \label{def A ronde} \cA^\phi_s &= \sum_{k \ge 0} \left( Y^{\zeta_k}_{\tau_{k+1}} - \espcond{Y^{\zeta_{k+1}}_{\tau_{k+1}}}{\cF^k_{\tau_{k+1}}} + \bar{c}_{\zeta_k}^{\alpha_{k+1}} \right) \ind{t < \tau_{k+1} \le s}.
\end{align}
\begin{Remark}For all $k \ge 0$, since $\alpha_{k+1} \in \cF^k_{\tau_{k+1}}$, we have
  \begin{align}
    \espcond{Y^{\zeta_{k+1}}_{\tau_{k+1}}}{\cF^k_{\tau_{k+1}}} = \sum_{j=1}^d \probcond{\zeta_{k+1} = j}{\cF^k_{\tau_{k+1}}} Y^j_{\tau_{k+1}} = \sum_{j=1}^d P^{\alpha_k}_{\zeta_k, j} Y^j_{\tau_{k+1}}. \label{calcul esp cond}
  \end{align}
\end{Remark}

\begin{Lemma}
\label{lem estimes startegies gene}
Assume that assumption \eqref{exist} is satisfied.
For any admissible strategy $\phi \in \mathscr A^i_t$, $\cM^\phi$ is a square integrable martingale with $\cM^\phi_t = 0$. Moreover, $\cA^\phi$ is increasing and satisfies $\cA^\phi_T \in L^2(\cF^\infty_T)$. In addition,
    \begin{align} \label{mart en t}
      \espcond{\left(\sum_{k \ge 0} \left(Y^{\zeta_{k+1}}_{\tau_{k+1}} - \espcond{Y^{\zeta_{k+1}}_{\tau_{k+1}}}{\cF^k_{\tau_{k+1}}}\right)\ind{\tau_{k+1} \le t}\right)^2}{\cF^0_t} < +\infty \quad \text{a.s.}
    \end{align}
\end{Lemma}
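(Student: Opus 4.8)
The goal is to prove three things about the processes attached to an admissible strategy $\phi \in \mathscr A^i_t$: that $\cM^\phi$ is a square-integrable martingale null at $t$, that $\cA^\phi$ is increasing with $\cA^\phi_T \in L^2(\cF^\infty_T)$, and the conditional $L^2$-bound \eqref{mart en t}. The first observation is that $\cM^\phi$ is a sum of martingale increments: each summand $Y^{\zeta_{k+1}}_{\tau_{k+1}} - \espcond{Y^{\zeta_{k+1}}_{\tau_{k+1}}}{\cF^k_{\tau_{k+1}}}$ is, by construction, the increment of a martingale in the enlarged filtration $\F^{k+1}$ (using that $\mathfrak U_{k+1}$ is independent of $\cF^k_\infty$ and that $Y \in \mathbb S^2_d(\F^0)$, so $Y^j_{\tau_{k+1}} \in L^2$), and the jump times $\tau_{k+1}$ are increasing stopping times. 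I would set $M^{(n)}_s := \sum_{k=0}^{n-1}\bigl(Y^{\zeta_{k+1}}_{\tau_{k+1}} - \espcond{Y^{\zeta_{k+1}}_{\tau_{k+1}}}{\cF^k_{\tau_{k+1}}}\bigr)\ind{t<\tau_{k+1}\le s}$, check by the tower property that each $M^{(n)}$ is an $\F^\infty$-martingale on $[t,T]$ with orthogonal increments (the $k$-th increment is $\cF^k_{\tau_{k+1}}$-centered), so that $\E[(M^{(n)}_T)^2] = \sum_{k=0}^{n-1}\E\bigl[(\dots)^2\ind{t<\tau_{k+1}\le T}\bigr]$, and then pass to the limit $n\to\infty$ using a uniform $L^2$ bound.

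The crux is therefore the uniform $L^2$ bound on the partial sums, and this is where the admissibility condition \eqref{eq strategy admissible} must be brought in. The key identity is \eqref{calcul esp cond}: $\espcond{Y^{\zeta_{k+1}}_{\tau_{k+1}}}{\cF^k_{\tau_{k+1}}} = \sum_{j} P^{\alpha_{k+1}}_{\zeta_k,j} Y^j_{\tau_{k+1}}$. Since $Y\in\cD$ on $[0,T]$ (constraint \eqref{orbsde2}), we have $Y^{\zeta_k}_{\tau_{k+1}} \ge \sum_j P^{\alpha_{k+1}}_{\zeta_k,j}Y^j_{\tau_{k+1}} - \bar c^{\alpha_{k+1}}_{\zeta_k}$, so the generic term of $\cA^\phi$, namely $Y^{\zeta_k}_{\tau_{k+1}} - \espcond{Y^{\zeta_{k+1}}_{\tau_{k+1}}}{\cF^k_{\tau_{k+1}}} + \bar c^{\alpha_{k+1}}_{\zeta_k}$, is $\ge 0$: this gives that $\cA^\phi$ is increasing. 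Conversely, each such term is also $\le 2\,\mathrm{osc}(Y) + \bar c^{\alpha_{k+1}}_{\zeta_k} \le C(1 + \sup_{s\in[t,T]}|Y_s|) + \bar c^{\alpha_{k+1}}_{\zeta_k}$, with $\sup|Y|\in L^2$, so telescoping and using the admissibility bound on $A^\phi_T - A^\phi_{\tau_0}$ (which controls $\sum_{k\ge 0}\bar c^{\alpha_{k+1}}_{\zeta_k}\ind{t<\tau_{k+1}\le T}$ in $L^2$) yields $\cA^\phi_T \in L^2(\cF^\infty_T)$, and correspondingly $\cM^\phi_T\in L^2$ since $\cM^\phi = -\cA^\phi + (\text{terms }Y^{\zeta_k}_{\tau_{k+1}} - Y^{\zeta_{k+1}}_{\tau_{k+1}} + \bar c)$... more directly, $\cM^\phi_s = \sum_k (Y^{\zeta_{k+1}}_{\tau_{k+1}} - \espcond{Y^{\zeta_{k+1}}_{\tau_{k+1}}}{\cF^k_{\tau_{k+1}}})\ind{t<\tau_{k+1}\le s}$, and I would bound $|Y^{\zeta_{k+1}}_{\tau_{k+1}} - \espcond{Y^{\zeta_{k+1}}_{\tau_{k+1}}}{\cF^k_{\tau_{k+1}}}| \le 2\sup_{s}|Y_s|$ on the event $\{\tau_{k+1}\le T\}$, but since infinitely many switches may occur I cannot sum these crudely — instead I use orthogonality of increments to get $\E[(\cM^\phi_T)^2] = \sum_k \E[(\dots)^2\ind{t<\tau_{k+1}\le T}]$ and then compare, term by term, $(Y^{\zeta_{k+1}}_{\tau_{k+1}} - \espcond{\cdot}{})^2$ against a quantity dominated by the increments of $\cA^\phi$ plus $\bar c$, reducing everything to the already-established $\cA^\phi_T\in L^2$. (A clean way: the summand of $\cA^\phi$ dominates $\bar c^{\alpha_{k+1}}_{\zeta_k} - |Y^{\zeta_{k+1}}_{\tau_{k+1}} - \espcond{\cdot}{}|\cdot\mathbf 1\{\cdot\}$... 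I will organize this so that the positivity of $\bar c$ — Assumption \ref{hyp sup section 2}-i), hence $\hat c > 0$ — is used to control the number of switches and hence make the sums absolutely convergent.)

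For \eqref{mart en t}, the same orthogonality argument applied on $[\tau_0,t]$ rather than $[t,T]$ gives $\espcond{(\sum_k(\dots)\ind{\tau_{k+1}\le t})^2}{\cF^0_t} = \sum_k \espcond{(\dots)^2\ind{\tau_{k+1}\le t}}{\cF^0_t}$, and this is finite a.s.\ because the admissibility condition provides $\espcond{(A^\phi_{\tau_0})^2}{\cF^0_{\tau_0}}<+\infty$ a.s.\ together with the $L^2$-control of $A^\phi_T - A^\phi_{\tau_0}$; combined with $\sup|Y|\in\mathbb S^2$, each conditional expectation is finite a.s., and the tail of the series is controlled by the $\cF^0_t$-conditional expectation of $(A^\phi_t - A^\phi_{\tau_0})^2 + (\text{osc of }Y)^2$. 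The main obstacle, and the step requiring the most care, is precisely this passage from the finite partial sums to the infinite sum: because the number of switches before time $T$ is only known to be a.s.\ finite (not bounded), I must argue convergence via the monotone/orthogonal structure rather than by naive summation of $L^2$ norms, and I must keep careful track of which estimates hold a.s.\ conditionally versus in $L^2$ unconditionally — in particular handling the initial bunch of instantaneous switches at $\tau_0$ separately through the weaker conditional admissibility requirement.
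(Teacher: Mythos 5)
Your overall architecture is sound in its outline: positivity of the increments of $\cA^\phi$ from $Y\in\cD$, the martingale/orthogonal-increment structure of $\cM^\phi$, truncation at $n$ switches and passage to the limit all match the paper. Your treatment of \eqref{mart en t} is also acceptable; the paper does it even more directly by observing that every switch counted there occurs exactly at time $t=\tau_0$, so the sum is bounded by $2|Y_t|N^\phi_t$ and one concludes from $\espcond{(N^\phi_t)^2}{\cF^0_t}<+\infty$ a.s., which follows from admissibility and $\hat c>0$.

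The genuine gap is at the step you yourself flag as the crux: the uniform $L^2$ bound. Your plan is to dominate each increment of $\cA^\phi$ (and each squared increment of $\cM^\phi$) by $C(1+\sup_s|Y_s|)+\bar c^{\alpha_{k+1}}_{\zeta_k}$ and then sum, controlling the number of terms by $N^\phi_T\le A^\phi_T/\hat c$. This leads to estimates of the form
\begin{align*}
\esp{(\cA^\phi_T)^2}\ \le\ C\,\esp{\big(1+\sup\nolimits_s|Y_s|\big)^2\,(N^\phi_T)^2} + C\,\esp{(A^\phi_T-A^\phi_t)^2},
\qquad
\esp{[\cM^\phi]_T}\ \le\ 4\,\esp{\big(\sup\nolimits_s|Y_s|\big)^2\,(N^\phi_T-N^\phi_t)},
\end{align*}
and the right-hand sides involve a \emph{product} of random variables that are only known to be integrable (or square integrable) separately; such a product need not be integrable, so ``$\cA^\phi_T\in L^2$'' is not actually established this way. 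The ``telescoping'' you invoke only produces the switched-BSDE identity \eqref{eq apres t}, which expresses $\cA^\phi$ through $\cM^\phi$ and conversely, so without a further device the argument is circular — and your proposed fix (comparing the squared $\cM^\phi$-increments term by term with the $\cA^\phi$-increments) does not work either, since the former are not dominated by the latter. The missing idea, which is how the paper closes the loop, is an absorption argument: for the truncated strategies one first gets $\esp{|\cA^n_{\tau_n\wedge T}|^2}\le\Lambda_1+\Lambda\,\esp{\int_t^{\tau_n\wedge T}\ud[\cM^n]_s}$ from \eqref{eq apres t}, then applies Itô's formula to $|\cY^n|^2$ to bound $\esp{\int_t^{\tau_n\wedge T}\ud[\cM^n]_s}$; the dangerous cross term $\esp{\int_t^{\tau_n\wedge T}|\cY^n_s|\ud\cA^n_s}$ is split by Young's inequality into $\frac{1}{2\epsilon}\Vert Y\Vert^2_{\mathbb S^2_d(\F^0)}+\frac{\epsilon}{2}\esp{(\cA^n_{\tau_n\wedge T})^2}$, the second piece is re-expressed through the first a priori estimate as $\frac{\epsilon}{2}(\Lambda_1+\Lambda\esp{\int\ud[\cM^n]_s})$, and choosing $\epsilon=\frac{1}{2\Lambda}$ reabsorbs it into the left-hand side. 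This coupling of the two estimates yields bounds uniform in $n$ without ever integrating a product of the type above; your proof needs this (or an equivalent device) to go through.
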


\proof
 Let $\phi \in \mathscr A^i_t$. Using \eqref{def A ronde} and \eqref{calcul esp cond}, we have, for all $s \in [t,T]$,
    \begin{align}
      \cA^\phi_s = \sum_{k \ge 0} \left( Y^{\zeta_k}_{\tau_{k+1}} - \sum_{j=1}^d P^{\alpha_{k+1}}_{\zeta_k,j} Y^j_{\tau_{k+1}} + \bar{c}_{\zeta_k}^{\alpha_{k+1}} \right)\ind{t < \tau_{k+1} \le s},
    \end{align}
    which is increasing since each summand is positive as $Y \in \cD$.\\
    We have, for $t \le s \le T$,
    \begin{align} \label{debut}
      \cY^\phi_s - \cY^\phi_t &= \sum_{k \ge 0} \left( Y^{\zeta_k}_{\tau_{k+1} \wedge s} - Y^{\zeta_k}_{\tau_k \wedge s} \right) + \sum_{k \ge 0} \left( Y^{\zeta_{k+1}}_{\tau_{k+1}} - Y^{\zeta_k}_{\tau_{k+1}} \right) \ind{t < \tau_{k+1} \le s}.
    \end{align}
    Using \eqref{orbsde}, we get, for all $k \ge 0$,
    \begin{align*} 
      &\hspace{-1.9cm}Y^{\zeta_k}_{\tau_{k+1} \wedge s} - Y^{\zeta_k}_{\tau_k \wedge s} \\
      &\hspace{-1.9cm}= -\int_{\tau_k \wedge s}^{\tau_{k+1} \wedge s} f^{\zeta_k}(u,Y^{\zeta_k}_u,Z^{\zeta_k}_u) \ud u + \int_{\tau_k \wedge s}^{\tau_{k+1} \wedge s} Z^{\zeta_k}_u \ud W_u - \int_{\tau_k \wedge s}^{\tau_{k+1} \wedge s} \ud K^{\zeta_k}_u,
    \end{align*}
    Recalling $\zeta_k$ is $\mathcal{F}_{\tau_k}$-measurable. We also have, using \eqref{calcul esp cond}, for all $k \ge 0$,
    \begin{align*} 
      &Y^{\zeta_{k+1}}_{\tau_{k+1}} - Y^{\zeta_k}_{\tau_{k+1}} \\ 
                                                               &= \left( Y^{\zeta_{k+1}}_{\tau_{k+1}} - \espcond{Y^{\zeta_{k+1}}_{\tau_{k+1}}}{\cF^k_{\tau_{k+1}}}\right) - \left( Y^{\zeta_k}_{\tau_{k+1}} - \sum_{j=1}^d P^{\alpha_{k+1}}_{\zeta_k, j} Y^j_{\tau_{k+1}} + \bar{c}_{\zeta_k}^{\alpha_{k+1}} \right) + \bar{c}_{\zeta_k}^{\alpha_{k+1}}.
    \end{align*}
    Plugging the two previous equalities into \eqref{debut}, we get:
    \begin{align}
      \nonumber \cY^\phi_s - \cY^\phi_t = &\sum_{k \ge 0} \left( -\int_{\tau_k \wedge s}^{\tau_{k+1} \wedge s} f^{\zeta_k}(u,Y^{\zeta_k}_u,Z^{\zeta_k}_u) \ud u + \int_{\tau_k \wedge s}^{\tau_{k+1} \wedge s} Z^{\zeta_k}_u \ud W_u - \int_{\tau_k \wedge s}^{\tau_{k+1} \wedge s} \ud K^{\zeta_k}_u \right)\\
      \nonumber &+ \sum_{k \ge 0} \left( Y^{\zeta_{k+1}}_{\tau_{k+1}} - \espcond{Y^{\zeta_{k+1}}_{\tau_{k+1}}}{\cF^k_{\tau_{k+1}}}\right) \ind{t < \tau_{k+1} \le s}  + A^\phi_s - A^\phi_t\\
                                          &- \sum_{k \ge 0} \left( Y^{\zeta_k}_{\tau_{k+1}} - \sum_{j=1}^d P^{\alpha_{k+1}}_{\zeta_k, j} Y^j_{\tau_{k+1}} + \bar{c}_{\zeta_k}^{\alpha_{k+1}} \right) \ind{t < \tau_{k+1} \le s}.
    \end{align}
    By definition of $\cY^\phi, \cZ^\phi, \cK^\phi, \cM^\phi, \cA^\phi$, we obtain, for all $s \in [t,T]$,
    \begin{align}
      \nonumber \cY^\phi_s = &\xi^{a_T} + \int_s^T f^{a_u}(u,\cY^\phi_u,\cZ^\phi_u) \ud u - \int_s^T \cZ^\phi_u \ud W_u - \int_s^T \ud \cM^\phi_u- \int_s^T \ud A^\phi_u \\
      \label{eq apres t} &+ \left[ \left(\cA^\phi_T + \cK^\phi_T \right) - \left(\cA^\phi_s + \cK^\phi_s \right)\right].
    \end{align}
    For any $n \ge 1$, we consider the admissible strategy $\phi_n = (\zeta_0, (\tau^n_k)_{k \ge 0}, (\alpha^n_k)_{k \ge 1})$ defined by $\zeta^n_0 = i = \zeta_0, \tau^n_k = \tau_k, \alpha^n_k = \alpha_k$ for $k \le n$, and $\tau^n_k = T+1$ for all $k > n$. We set $\cY^n := \cY^{\phi_n}, \cZ^nv:= \cZ^{\phi_n}$, and so on.\\
    By \eqref{eq apres t} applied to the strategy $\phi^n$, we get, recalling that $\cA^n_t = 0$,
    \begin{align}
      \nonumber \cA^n_{\tau_n \wedge T} = &\cY^n_t - \cY^n_{\tau_n \wedge T} - \int_t^{\tau_n \wedge T} f^{a^n_s}(s,\cY^n_s,\cZ^n_s) \ud s + \int_t^{\tau_n \wedge T} \cZ^n_s \ud W_s\\ &+ \int_t^{\tau_n \wedge T} \ud \cM^n_s + \int_t^{\tau_n \wedge T} \ud A^n_s - \int_t^{\tau_n \wedge T} \ud \cK^n_s.
    \end{align}
    We obtain, for a constant $\Lambda > 0$,
    \begin{align}
      \esp{|\cA^n_{\tau_n \wedge T}|^2} \le& \Lambda\left(\esp{|\cY^n_t|^2 + |\cY^n_{\tau_n \wedge T}|^2 + \int_t^{\tau_n \wedge T} |f^{a^n_s}(s,\cY^n_s,\cZ^n_s)|^2 \ud s \right. \right. \\
      \nonumber &\left.\left. + \int_t^{\tau_n \wedge T}|\cZ^n_s|^2\ud s + \int_t^{\tau_n \wedge T} \ud [\cM^n]_s + (A^\phi_T - A^\phi_t)^2 + (\cK^n_T)^2}\right).
    \end{align}
    We have
    \begin{align*}
      \esp{|\cY^n_r|^2} &\le \sum_{j=1}^d \esp{|Y^j_r|^2} = \esp{|Y_r|^2} \le \esp{\sup_{t \le r \le T} |Y_r|^2} = \Vert Y \Vert_{\mathbb S^2_d(\F^0)}^2,\\
      \esp{\int_t^{\tau_n \wedge T}|\cZ^n_s|^2\ud s} &\leqslant \Vert Z\Vert_{\H^2_{d\times \kappa}(\F^0)},\\
      \esp{\int_t^{\tau_n \wedge T} |f^{a_s^n}(s,\cY^n_s,\cZ^n_s)|^2\ud s} 
                                                                   & \le 4L^2T\Vert Y \Vert_{\mathbb S^2_d(\F^0)}^2 + 4L^2\Vert Z\Vert_{\H^2_d(\F^0)} + 2 \Vert f(\cdot,0,0) \Vert_{\H^2_d(\F^0)}^2
    \end{align*}
    and
    \begin{align*}
      &\esp{(\cK^n_T)^2} \le \esp{|K_T|^2}.
    \end{align*}
    Thus, by these estimates and the fact that $A^\phi_T - A^\phi_t \in L^2(\cF^\infty_T)$ as $\phi$ is admissible, there exists a constant $\Lambda_1 > 0$ such that
    \begin{align}
      \label{est A} \esp{|\cA^n_{\tau_n \wedge T}|^2} \le \Lambda_1 + \Lambda \esp{\int_t^{\tau_n\wedge T} \ud[\cM^n]_s}.
      \end{align}
      Using \eqref{eq apres t} applied to $\phi^n$ and Itô's formula between $t$ and $\tau_n \wedge T$, since $\cM^n$ is a square integrable martingale orthogonal to $W$ and $A^n, \cA^n, \cK^n$ are nondecreasing and nonnegative, we get
      \begin{align}
        &\nonumber \esp{|\cY^n_t|^2 + \int_t^{\tau_n \wedge T} |\cZ^n_s|^2 \ud s + \int_t^{\tau_n \wedge T} \ud [\cM^n]_s} \\
        &\nonumber = \esp{|\cY^n_{\tau_n \wedge T}|^2 + 2\int_t^{\tau_n \wedge T} \cY^n_s f^{a^n_s}(s,\cY^n_s,\cZ^n_s)\ud s - 2\int_t^{\tau_n \wedge T} \cY^n_s \ud A^n_s \right.\\
        &\nonumber \hspace{.7cm} \left. + 2\int_t^{\tau_n \wedge T} \cY^n_s \ud \cA^n_s + 2\int_t^{\tau_n \wedge T} \cY^n_s \ud \cK^n_s} \\
        &\nonumber \le \esp{|\cY^n_{\tau_n \wedge T}|^2} + 2\esp{\int_t^{\tau_n \wedge T} |\cY^n_s f^{a^n_s}(s,\cY^n_s,\cZ^n_s)| \ud s} + 2 \esp{\int_t^{\tau_n \wedge T} |\cY^n_s| \ud A^n_s}\\
        &\label{est M} \hspace{.2cm}+ 2 \esp{\int_t^{\tau_n \wedge T} |\cY^n_s| \ud \cA^n_s} + 2 \esp{\int_t^{\tau_n \wedge T} |\cY^n_s| \ud \cK^n_s}.
      \end{align}
    We have, using Young's inequality, for some $\epsilon > 0$, and \eqref{est A},
    \begin{align*}
      &\esp{\int_t^{\tau_n \wedge T} |\cY^n_s f^{a^n_s}(s,\cY^n_s,\cZ^n_s)| \ud s} \le \frac12 \esp{\int_t^{\tau_n \wedge T} |\cY^n_s|^2 \ud s} + \frac12 \esp{\int_t^{\tau_n \wedge T} |f^{a^n_s}(s,\cY^n_s,\cZ^n_s)|^2\ud s} \\
                                         &\hspace{5.25cm} \le T(\frac12 + 2L^2)\Vert Y \Vert_{\mathbb S^2_d(\F^0)}^2 + 2L^2\Vert Z\Vert_{\H^2_d(\F^0)} + \Vert f(\cdot,0,0) \Vert_{\H^2_d(\F^0)}^2,\\
                                         &\esp{\int_t^{\tau_n \wedge T} |\cY^n_s| \ud A^n_s} \le \frac{1}{2} \Vert Y \Vert^2_{\mathbb S^2_d(\F^0)} + \frac{1}{2}\esp{(A^\phi_T - A^\phi_t)^2}, \\
                                         &\esp{\int_t^{\tau_n \wedge T} |\cY^n_s| \ud \cK^n_s} \le \frac{1}{2} \Vert Y \Vert^2_{\mathbb S^2_d(\F^0)} + \frac{1}{2}\esp{|K_T|^2}, \\
    \end{align*}
    and
    \begin{align*}
      \esp{\int_t^{\tau_n \wedge T} |\cY^n_s| \ud \cA^n_s} &\le \frac{1}{2\epsilon} \Vert Y \Vert^2_{\mathbb S^2_d(\F^0)} + \frac{\epsilon}{2}\esp{(\cA^n_{\tau_n \wedge T})^2} \\
                                         &\le \frac{1}{2\epsilon} \Vert Y \Vert^2_{\mathbb S^2_d(\F^0)} + \frac{\epsilon}{2}\left(\Lambda_1 + \Lambda \esp{\int_t^{\tau_n\wedge T} \ud[\cM^n]_s}\right).
    \end{align*}
    Using these estimates together with \eqref{est M} gives, for a constant $C_\epsilon > 0$ independent of $n$,
    \begin{align}
      \left(1 - \epsilon \Lambda\right)\esp{\int_t^{\tau_n \wedge T} \ud [\cM^n]_s} \le C_\epsilon \left(\Vert Y \Vert^2_{\mathbb S^2_d(\F^0)}+\Vert Z\Vert_{\H^2_{d\times \kappa}(\F^0)} + \Vert f(\cdot,0,0) \Vert_{\H^2_d(\F^0)}^2+\esp{|K_T|^2}\right),
    \end{align}
    and chosing $\epsilon = \frac{1}{2\Lambda}$ gives that $\esp{\int_t^{\tau_n \wedge T} \ud [M^n]_s}$ is upper bounded independently of $n$. We also get an upper bound independent of $n$ for $\esp{(\cA^n_{\tau_n \wedge T})^2}$ by \eqref{est A}.\\
    Since $\int_t^{\tau_n \wedge T} \ud [\cM^n]_s$ (resp. $|\cA^n_{\tau_n \wedge T}|^2$) is nondecreasing to $\int_t^T \ud [\cM^\phi]_s$ (resp. to $|\cA^\phi_T|^2$), we obtain by monotone convergence the first part of Lemma \eqref{lem estimes startegies gene}. It is also clear that $\cM^\phi$ is a martingale satisfying $\cM^\phi_t = 0$.\\
    We now prove \eqref{mart en t}. Using that $\espcond{(N^\phi_t)^2}{\cF^0_t}$ is almost-surely finite as $\phi$ is admissible and $\hat{c}>0$, we compute,
    \begin{align}
      &\nonumber \espcond{\left(\sum_{k \ge 0} \left(Y^{\zeta_{k+1}}_{\tau_{k+1}} - \espcond{Y^{\zeta_{k+1}}_{\tau_{k+1}}}{\cF^k_{\tau_{k+1}}}\right)\ind{\tau_{k+1} \le t}\right)^2}{\cF^0_t}\\
      &\nonumber \le \espcond{\left(\sum_{k \ge 0} \left|Y^{\zeta_{k+1}}_{\tau_{k+1}} - \sum_{j=1}^d p^{\alpha_{k+1}}_{\zeta_k, j} Y^j_t\right|\ind{\tau_{k+1} \le t}\right)^2}{\cF^0_t} \\
      &\le 4 |Y_t|^2 \espcond{(N^\phi_t)^2}{\cF^0_t} < +\infty\quad \text{a.s.}
    \end{align}
 \eproof

\subsubsection{An optimal strategy}
\textcolor{black}{
We now introduce a strategy, which turns out to be optimal for the control problem. This strategy is the natural extension to our setting of the optimal one for \emph{classical switching problem}, see e.g. \cite{HT10}. The first key step is to prove that this strategy is admissible, which is more involved than in the classical case due to the randomisation.}

\vspace{2mm}
\noindent Let $\phi^\star = (\zeta^\star_0, (\tau^\star_n)_{n \ge 0}, (\alpha^\star_n)_{n \ge 1})$ defined by $\tau^\star_0 = t$ and $\zeta^\star_0 = i$ and inductively by:
\begin{align}
  \label{def tau star} \tau^\star_{k+1} &= \inf \left\{ \tau^\star_k \le s \le T : Y^{\zeta^\star_k}_s = {\max}_{u \in \cC} \left\{  \sum_{j=1}^d P^u_{\zeta_k,j} Y^j_s - \bar{c}_{\zeta^\star_k}^{u} \right\} \right\} \wedge (T + 1), \\
  \label{def alpha star} \alpha^\star_{k+1} &= {\min}\left\{\alpha \in \argmax_{u \in \cC} \left\{ \sum_{j=1}^d P^u_{\zeta_k,j} Y^j_{\tau^\star_{k+1}} - \bar{c}_{\zeta_k}^{u} \right\}\right\},
\end{align}
recall that $\cC$ is ordered.\\
In the following lemma, we show that, since $\cD$ has non-empty interior, the number of switch (hence the cost) required to leave any point on the boundary of $\cD$ is square integrable, following the strategy $\phi^\star$. This result will be used to prove that the cost associated to $\phi^\star$ satisfies $\espcond{\left(A^{\phi^\star}_t\right)^2}{\cF_t^0}$ is almost surely finite.

 \begin{Lemma}
  \label{lem sortie bord}
Let Assumption \ref{exist}-ii) hold.
  For $y \in \cD$, we define
    \begin{align}
      S(y) = \left\{1 \le i \le d : y_i = \max_{u \in \cC} \left\{ \sum_{j=1}^d P^u_{i,j} y_j - \bar{c}_{i}^{u} \right\} \right\},
    \end{align}
    and $(u_i)_{i \in S(y)}$ the family of elements of $\cC$ given by
    $$u_i = \min  \argmax_{u \in \cC} \left\{ \sum_{j=1}^d P^{u}_{i,j} y_j - \bar{c}_{i}^{u}\right\}.$$
    Consider the homogeneous Markov Chain $X$ on $S(y) \cup \{0\}$ defined by, for $k \ge 0$ and $i,j \in S(y)^2$,
    \begin{align*}
      \P(X_{k+1} = j | X_k = i) &= P^{u_i}_{i,j}, \\
      \P(X_{k+1} = 0 | X_k = i) &= 1 - \sum_{j \in S(y)} P^{u_i}_{i,j},\\
      \P(X_{k+1}=0 | X_k=0) &= 1,\\
      \P(X_{k+1}=i | X_k=0) &=0.
    \end{align*}
    Then $0$ is accessible from every $i \in S(y)$, meaning that $X$ is an absorbing Markov Chain.\\
    Moreover, let $N(y) = \inf \{n \ge 0 : X_n = 0 \}$. Then $N(y) \in L^2(\P^i)$ for all $i \in S(y)$, where $\P^i$ is the probability satisfying $\P^i(X_0 = i) = 1$.
  \end{Lemma}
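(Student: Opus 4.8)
The plan is to reduce both assertions --- absorption (accessibility of $0$) and the $L^2$ integrability of the absorption time --- to elementary finite Markov chain theory, the only nontrivial input being a uniform lower bound on the one-step escape probability. First I would fix $y \in \cD$ and the finite index set $S(y) \subset \{1,\dots,d\}$. For each $i \in S(y)$ set $q_i := 1 - \sum_{j \in S(y)} P^{u_i}_{i,j} = \P(X_{k+1} = 0 \mid X_k = i)$, the probability of jumping to the absorbing state in one step from $i$. The key claim is that $q_i > 0$ for every $i \in S(y)$. Granting this, since $S(y)$ is finite we get $q := \min_{i \in S(y)} q_i > 0$, hence from any $i \in S(y)$ the chain reaches $0$ in one step with probability at least $q$; in particular $0$ is accessible from every $i$, so $X$ is an absorbing chain. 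Moreover, under any $\P^i$, for every $n \ge 1$ we have $\P^i(N(y) > n) \le (1-q)^n$ by iterating the Markov property and the uniform bound, so $N(y)$ has a geometric tail and therefore $N(y) \in L^p(\P^i)$ for all $p$, in particular $p=2$. This handles everything once the claim $q_i>0$ is established.

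So the heart of the matter --- and the step I expect to be the main obstacle --- is proving $q_i > 0$, i.e. that from any $i \in S(y)$ the optimal transition law $P^{u_i}_{i,\cdot}$ puts positive mass \emph{outside} $S(y)$. Suppose for contradiction that $\sum_{j \in S(y)} P^{u_i}_{i,j} = 1$ for some $i \in S(y)$. By definition of $S(y)$ and of $u_i \in \argmax$, we have, for each $j \in S(y)$, the equality $y_j = \max_{u} \{\sum_k P^u_{j,k} y_k - \bar c_j^u\} \ge \sum_k P^{u_j}_{j,k} y_k - \bar c_j^{u}$ for every admissible $u$, and in particular one can write $y_j$ as a value dominating the linear functional evaluated at $y$. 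The idea is to sum the defining equality $y_i = \sum_{j} P^{u_i}_{i,j} y_j - \bar c_i^{u_i}$ against the stationary distribution of the (now sub-stochastic-but-actually-stochastic, by the contradiction hypothesis) restricted kernel on $S(y)$: pick an irreducible recurrent class $R \subseteq S(y)$ for the restricted chain with stationary probability vector $\pi$ supported on $R$. Averaging the relations $y_i \ge \sum_{j} P^{u_i}_{i,j} y_j - \bar c_i^{u_i}$ over $i \in R$ weighted by $\pi_i$, the $y$-terms cancel (using $\sum_i \pi_i P^{u_i}_{i,j} = \pi_j$ and that the kernel restricted to $R$ is stochastic), leaving $0 \ge - \sum_{i \in R} \pi_i \bar c_i^{u_i}$, i.e. $\sum_{i \in R} \pi_i \bar c_i^{u_i} \le 0$. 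Since $\bar c_i^{u_i} \ge 0$ (costs are nonnegative, Assumption~\ref{hyp sup section 2}-i) --- or $\tilde c \ge 0$ in the shifted setting) this forces $\bar c_i^{u_i} = 0$ for all $i \in R$, and then the chain on $R$ together with Assumption~\ref{exist}-ii) ($P^u_{i,i} \neq 1$) must be exploited to reach a contradiction: no loop can have all zero costs unless some $P^{u_i}_{i,i}=1$, which is forbidden. I would make this last sub-step precise by arguing that a zero-cost recurrent loop would allow a switching strategy that returns to mode $i$ at no cost while still satisfying the reflection equalities, contradicting that $\cD$ has nonempty interior (equivalently contradicting $\hat{\tilde c}>0$ in the shifted domain $\tilde\cD$ from Proposition~\ref{prop unicite couts generaux}); the nonempty interior of $\cD$ is exactly what rules out such degenerate cost cycles.

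To summarize the order of steps I would write: (1) reduce to showing $q_i>0$ for all $i\in S(y)$; (2) from $q:=\min_i q_i>0$ deduce accessibility of $0$ and the geometric tail bound $\P^i(N(y)>n)\le(1-q)^n$, hence $N(y)\in L^2(\P^i)$; (3) prove $q_i>0$ by contradiction: if the optimal kernel from $i$ stays in $S(y)$, extract a recurrent class $R$, average the reflection inequalities against its stationary measure to get $\sum_{i\in R}\pi_i \bar c_i^{u_i}\le 0$, conclude all these costs vanish, and derive a contradiction with the nonempty interior of $\cD$ (and with $P^u_{i,i}\neq1$). The only genuinely delicate point is step (3); steps (1) and (2) are routine Markov-chain bookkeeping.
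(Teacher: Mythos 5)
There is a genuine gap, and it sits exactly where you predicted the difficulty would be, but not in the way you anticipated: your key claim that $q_i := \P(X_{k+1}=0 \mid X_k = i) > 0$ for \emph{every} $i \in S(y)$ is false in general, so the whole reduction in your steps (1)--(2) collapses. Nothing prevents the optimal kernel from some $i \in S(y)$ from putting all its mass inside $S(y)$, as long as the chain eventually leaks out: take for instance a classical (deterministic) switching specification where $u_1$ sends state $1$ to state $2$ with probability one, $2 \in S(y)$, and $u_2$ sends state $2$ outside $S(y)$; then $q_1 = 0$ yet $0$ is accessible from $1$ in two steps and the lemma holds. Your attempted proof of $q_i>0$ by contradiction also does not go through: assuming $\sum_{j\in S(y)}P^{u_i}_{i,j}=1$ for a \emph{single} $i$ does not make the restricted kernel on $S(y)$ stochastic (only row $i$ is), so there need not exist any recurrent class $R\subseteq S(y)$ to average against, and indeed no contradiction can be derived because the hypothesis is not contradictory. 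The correct claim to prove is the one in the lemma's statement: $0$ is accessible from every $i$, equivalently no recurrent class of the chain is contained in $S(y)$. Once that is established, the $L^2$ (in fact all-moments) integrability of $N(y)$ follows from the standard theory of finite absorbing chains (the paper cites Kemeny--Snell, Theorem 3.3.5): one gets $\P^i(N(y)>n)\le C\rho^n$ because $0$ is reachable from every state within at most $|S(y)|$ steps with probability bounded below, not via a uniform one-step escape probability.

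The silver lining is that your recurrent-class/stationary-measure machinery is precisely the right tool for the \emph{corrected} contradiction hypothesis. If $0$ is not accessible from some $i$, the closed set of states reachable from $i$ lies in $S(y)$ and contains a recurrent class $R$ on which the kernel $(P^{u_j}_{j,k})_{j,k\in R}$ is stochastic and irreducible; averaging the \emph{equalities} $y_j=\sum_{k}P^{u_j}_{j,k}y_k-\bar c_j^{u_j}$ ($j\in R$) against its stationary distribution $\pi$ gives $\sum_{j\in R}\pi_j\bar c_j^{u_j}=0$, which under the standing positive-cost assumption $\hat c>0$ is an immediate contradiction since $\pi_j>0$ for all $j\in R$ --- no appeal to $P^u_{i,i}\neq 1$ or to degenerate cost cycles is needed at that point. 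Note that the paper deliberately avoids using cost positivity here and instead argues geometrically: it shows that the sub-domain $\cD'$ cut out by the constraints indexed by $R$ would be a single line in $\R^{|R|}$ (its slice is a compact polytope whose only extreme point is the unique solution of the saturated affine system), and since $|R|\ge 2$ by Assumption \ref{exist}-ii) this forces $\cD\subset\R^{d-|R|}\times\cD'$ to have empty interior, a contradiction. If you repair your step (3) along the lines above you obtain a correct and somewhat more elementary proof in the positive-cost setting, but you must abandon the uniform one-step bound $q>0$ and replace your tail estimate by the standard absorbing-chain estimate.
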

  \begin{proof}
    Assume that there exists $i \in S(y)$ from which $0$ is not accessible. Then every communicating class accessible from $i$ is included in $S(y)$. In particular, there exists a recurrent class $S' \subset S(y)$. For all $i \in S'$, we have $P^{u_i}_{i,j} = 0$ if $j \not\in S'$ since $S'$ is recurrent. Moreover, since $S' \subset S(y)$, we obtain, for all  $i \in S'$, by definition of $S(y)$,
    \begin{align}
    \label{egalites y}
      y_i = \sum_{j \in S'} P^{u_i}_{i,j} y_j - \bar{c}_{i}^{u_i}.
    \end{align}
    Since $S'$ is a recurrent class, the matrix $\tilde{P} = (P^{u_i}_{i,j})_{i,j \in S'}$ is stochastic and irreducible.\\
    By definition of $\cD$, we have 
    $$\cD \subset \R^{d - |S'|} \times \left\{z \in \R^{|S'|} | z_i \ge \sum_{j \in S'} P^{u_i}_{i,j} z_j - \bar{c}_{i}^{u_i}, i \in S' \right\} = \R^{d - |S'|} \times \cD'.$$ 
    With a slight abuse of notation, we do not renumber coordinates of vectors in $\cD'$.\\
    Let $i_0 \in S'$ and let us restrict ourself to the domain $\cD'$. According to Lemma \ref{lem-general}, $\cD'$ is invariant by translation along the vector $(1,...,1)$ of $\R^{|S'|}$. Moreover, Assumption \ref{ass-uncontrolled-irred} is fulfilled since $\tilde{P}$ is irreducible and controls $(u_i)_{i \in S(y)}$ are set. So, Proposition \ref{pr necessary conditions} gives us that $\cD' \cap \{z \in \R^{|S'|} | z_{i_0}=0\}$ is a compact convexe polytope. Recalling \eqref{egalites y}, we see that 
    $(y_i - y_{i_0})_{i \in S'}$ is a point of $\cD' \cap \{z \in \R^{|S'|} | z_{i_0}=0\}$ that saturates all the inequalities. So, $(y_i - y_{i_0})_{i \in S'}$ is an extreme points of $\cD' \cap \{z \in \R^{|S'|} | z_{i_0}=0\}$ and all extreme points are given by 
    $$\mathcal{E}:= \left\{z \in \R^{|S'|} | z_i = \sum_{j \in S'} P^{u_i}_{i,j} z_j - \bar{c}_{i}^{u_i}, i \in S', z_{i_0}=0 \right\}.$$
    Recalling that $\cD'$ is compact,  $\mathcal{E}$ is a nonempty bounded affine subspace of $\R^{|S'|}$, so it is a singleton. Since $\cD' \cap \{z_{i_0}=0\}$ is a compact convex polytope, it is the convex hull of $\mathcal{E}$ and so it is also a singleton.
     Hence $\cD'$ is a line in $\R^{|S'|}$. Moreover, $|S'| \ge 2$ as $P^u_{i,i} \neq 1$ for all $u \in \cC$ and $i \in \{1,\dots,d\}$. Thus $\cD \subset \R^{d - |S'|} \times \cD'$ gives a contradiction with the fact that $\cD$ has non-empty interior and the first part of the lemma is proved.\\
    %
   %
    Finally, we have $N(y) \in L^2(\P^i)$ for all $i \in S(y)$ thanks to Theorem 3.3.5 in \cite{KS81}.
  \eproof
\end{proof}

\begin{Lemma} \label{lem-integ-adm}
  Assume that assumption \eqref{exist} is satisfied.
  The strategy $\phi^\star$ is admissible.
\end{Lemma}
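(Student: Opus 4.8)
The plan is to verify separately the two requirements in the admissibility condition \eqref{eq strategy admissible} for $\phi^\star\in\mathscr A^i_t$ (so $\tau_0=t$); both reduce to controlling the number of switches, since $\hat c\,N^{\phi}_s\le A^{\phi}_s\le\check c\,N^{\phi}_s$ and $\check c<+\infty$ by compactness of $\cC$. For the finiteness of $\espcond{(A^{\phi^\star}_t)^2}{\cF^0_t}$, I would observe that by \eqref{def tau star}--\eqref{def alpha star}, starting from a mode $j$ (all previous switches having occurred at time $t$) a further switch happens at time $t$ if and only if $j\in S(Y_t)$, and then the next mode is $F(u_j,j,\mathfrak U)$ with $u_j=\min\argmax_{u\in\cC}\{\sum_\ell P^u_{j,\ell}Y^\ell_t-\bar c^u_j\}$; hence the modes visited at time $t$ form exactly the absorbing Markov chain $X$ of Lemma \ref{lem sortie bord} associated with $y=Y_t$ (which lies in $\cD$ a.s.\ by \eqref{orbsde2}), started at $i$, and the number of switches at time $t$ equals $N(Y_t)$ on $\{i\in S(Y_t)\}$ and $0$ otherwise. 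Since $(\mathfrak U_n)_{n\ge1}$ is independent of $\cF^0_t$ while $Y_t$ is $\cF^0_t$-measurable, freezing $Y_t$ gives $\espcond{N(Y_t)^2\ind{i\in S(Y_t)}}{\cF^0_t}=g(Y_t)$ with $g(y):=\E^i[N(y)^2]\ind{i\in S(y)}$, and Lemma \ref{lem sortie bord} ensures $g(y)<+\infty$ for every $y\in\cD$; thus $\espcond{(A^{\phi^\star}_t)^2}{\cF^0_t}\le\check c^2\,g(Y_t)<+\infty$ a.s.

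For the $L^2$-integrability of $A^{\phi^\star}_T-A^{\phi^\star}_t$, I would introduce the truncated strategies $\phi^\star_n$ obtained by freezing $\phi^\star$ after its $n$-th switch. Each $\phi^\star_n$ has at most $n$ switches, hence $A^{\phi^\star_n}_T-A^{\phi^\star_n}_t\le n\check c$ and $\espcond{(A^{\phi^\star_n}_t)^2}{\cF^0_t}\le n^2\check c^2$, so $\phi^\star_n$ is admissible and Lemma \ref{lem estimes startegies gene} together with the decomposition \eqref{eq apres t} apply to it. Two structural facts are crucial here. First, by the choice \eqref{def alpha star}, at every switching time $\tau^\star_{k+1}\le T$ one has $Y^{\zeta_k}_{\tau^\star_{k+1}}=\sum_j P^{\alpha^\star_{k+1}}_{\zeta_k,j}Y^j_{\tau^\star_{k+1}}-\bar c^{\alpha^\star_{k+1}}_{\zeta_k}$, so that $\cA^{\phi^\star_n}\equiv0$. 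Second, by the choice \eqref{def tau star}, $Y^{\zeta_k}$ stays strictly above its obstacle on every interval $[\tau^\star_k,\tau^\star_{k+1})$, so by the Skorokhod condition \eqref{orbsde3} and continuity of $K$ the process $K^{\zeta_k}$ is constant on every complete interval $[\tau^\star_k,\tau^\star_{k+1}]$ with $\tau^\star_{k+1}\le T$; hence $\cK^{\phi^\star_n}$ can only increase on the last (frozen) interval and $\cK^{\phi^\star_n}_T-\cK^{\phi^\star_n}_t\le\sqrt d\,|K_T|$, with an $L^2$-bound uniform in $n$.

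Then I would plug $\cA^{\phi^\star_n}\equiv0$ into \eqref{eq apres t} at $s=t$ and solve for $A^{\phi^\star_n}_T-A^{\phi^\star_n}_t$, expressing it through $\xi$, $\cY^{\phi^\star_n}_t$, the Lebesgue integral of $f$, the stochastic integral against $W$, the increment of $\cM^{\phi^\star_n}$ and $\cK^{\phi^\star_n}_T-\cK^{\phi^\star_n}_t$. Taking $L^2$-norms and using $|\cY^{\phi^\star_n}_\cdot|\le|Y_\cdot|$, $|\cZ^{\phi^\star_n}_\cdot|\le|Z_\cdot|$ bounds $\E[(A^{\phi^\star_n}_T-A^{\phi^\star_n}_t)^2]$ by the $\mathbb S^2_d(\F^0)$, $\H^2$ and $L^2$ norms of $Y$, $Z$, $f(\cdot,0,0)$, $\xi$, $K_T$, plus a multiple of $\E[[\cM^{\phi^\star_n}]_T]$. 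To control the latter uniformly in $n$, I would apply It\^o's formula to $|\cY^{\phi^\star_n}_\cdot|^2$ on $[t,T]$ exactly as in the proof of Lemma \ref{lem estimes startegies gene}, but with the roles of $A^\phi$ and $\cA^\phi$ interchanged (here $\cA^{\phi^\star_n}=0$ and $A^{\phi^\star_n}$ plays the part that $\cA^\phi$ played there): Young's inequality then gives $\E[[\cM^{\phi^\star_n}]_T]\le C'+C'\epsilon\,\E[(A^{\phi^\star_n}_T-A^{\phi^\star_n}_t)^2]$ for arbitrary $\epsilon>0$. Combining the two inequalities and choosing $\epsilon$ small enough yields a bound on $\E[(A^{\phi^\star_n}_T-A^{\phi^\star_n}_t)^2]$ independent of $n$; since $A^{\phi^\star_n}_T-A^{\phi^\star_n}_t$ increases to $A^{\phi^\star}_T-A^{\phi^\star}_t$ as $n\to\infty$, monotone convergence concludes.

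The main obstacle is this last $L^2$-bound: the energy estimate of Lemma \ref{lem estimes startegies gene}, applied directly to $\phi^\star$, is circular because it uses the admissibility (the $L^2$-integrability of $A^\phi_T-A^\phi_t$) that we are trying to establish. What breaks the circularity is precisely the pair of structural identities $\cA^{\phi^\star_n}\equiv0$ and the constancy of $\cK^{\phi^\star_n}$ on complete intervals, which turn $A^{\phi^\star_n}_T-A^{\phi^\star_n}_t$ into the unknown of a self-improving coupled system of inequalities whose constants do not depend on $n$. Beyond that, some care is needed with the jump terms in It\^o's formula and with the measurability of $g$, but these points are routine and handled as in the proof of Lemma \ref{lem estimes startegies gene}.
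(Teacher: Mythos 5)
Your proposal is correct and follows essentially the same route as the paper: truncate $\phi^\star$ after $n$ switches, exploit the two structural facts that $\cA^{\phi^\star_n}$ vanishes and that $K^{\zeta^\star_k}$ does not increase before $\tau^\star_{k+1}$, then close the coupled estimates between $\E[(A^{\phi^\star_n}_T-A^{\phi^\star_n}_t)^2]$ and $\E[[\cM^{\phi^\star_n}]]$ via Young's inequality and pass to the limit by monotone convergence, handling the initial-time switches through Lemma \ref{lem sortie bord} and the function $\Psi(y)=\E^i[N(y)^2]$. The only cosmetic difference is that the paper runs the identity \eqref{eq apres t} up to $\tau^\star_n\wedge T$ (so the $\cK$ term disappears entirely) rather than up to $T$ with a residual $\cK$ contribution on the frozen interval.
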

  \proof
   For $n \ge 1$, we consider the admissible strategy $\phi_n = (\zeta_0, (\tau^n_k)_{k \ge 0}, (\alpha^n_k)_{k \ge 1})$ defined by $\zeta^n_0 = i = \zeta_0^\star, \tau^n_k = \tau^\star_k, \alpha^n_k = \alpha^\star_k$ for $k \le n$, and $\tau^n_k = T+1$ for all $k > n$. We set $\cY^n_s := \cY^{\phi_n}_s, \cZ^n_s := \cZ^{\phi_n}_s$ and so on, for all $s \in [t,T]$.\\
    By definition of $\tau^\star, \alpha^\star$, recall \eqref{def tau star}-\eqref{def alpha star}, it is clear that $\cA^n_{s\wedge \tau_n^\star} = 0$ and that $\int_{\tau^\star_k \wedge s}^{\tau^\star_{k+1} \wedge s} \ud K^{\zeta^\star_k}_u = 0$ for all $k < n$ and $s \in [t,T]$. The identity \eqref{eq apres t} for the admissible strategy $\phi^n$ gives
    \begin{align*}
      \cY^n_t = \cY^n_{\tau^\star_n \wedge T} + \int_t^{\tau^\star_n \wedge T} f^{a^n_s}(s,\cY^n_s,\cZ^n_s)\ud s - \int_t^{\tau^\star_n \wedge T} \cZ^n_s \ud W_s - \int_t^{\tau^\star_n \wedge T} \ud \cM^n_s - \int_t^{\tau^\star_n \wedge T} \ud A^n_s.
    \end{align*}
    Using similar arguments and estimates as in the precedent proof, we get
    \begin{align}
      \label{est A opti} \esp{|A^n_{\tau^\star_n \wedge T} - A^n_t|^2} \le \Lambda_1 + \Lambda\esp{\int_t^{\tau^\star_n \wedge T} \ud[\cM^u]_s},
    \end{align}
    and, for $\epsilon > 0$,
    \begin{align}
      \left(1 - \epsilon\Lambda\right)\esp{\int_t^{\tau^\star_n \wedge T} \ud [\cM^n]_s} \le C_\epsilon \left(\Vert Y \Vert^2_{\mathbb S^2_d(\F^0)}+\Vert Z\Vert_{\H^2_{d\times \kappa}(\F^0)} + \Vert f(\cdot,0,0) \Vert_{\H^2_d(\F^0)}^2\right).
    \end{align}
    Choosing $\epsilon = \frac{1}{2\Lambda}$ gives that $\esp{\int_t^{\tau^\star_n \wedge T} \ud [\cM^n]_s}$ and $\esp{|A^n_{\tau^\star_n \wedge T} - A^n_t|^2}$ are upper bounded uniformly in $n$, hence by monotone convergence, we get that $A^{\phi^\star}_T - A^{\phi^\star}_t \in L^2(\cF^\infty_T)$.\\
    It remains to prove that $A^{\phi^\star}_t \in L^2(\cF^0_t)$. We have $A^{\phi^\star}_t \le \check{c} N^{\phi^\star}_t$, and $\espcond{(N^{\phi^\star}_t)^2}{\cF^0_t} < +\infty$ a.s. is immediate {from Lemma \ref{lem sortie bord}}, since $\espcond{(N^{\phi^\star}_t)^2}{\cF^0_t} = \Psi(Y_t)$ with $\Psi(y) = \mathbb E^i\left[(N(y))^2\right], y \in \cD$, where $\mathbb E^i$ is the expectation under the probability $\P^i$ defined in Lemma \ref{lem sortie bord}. \eproof

\subsubsection{Proof of Theorem \ref{th representation result}}
We now have all the key ingredients to conclude the proof of Theorem \ref{th representation result}.

\noindent 1. Let $\phi \in \mathscr A^i_t$, and consider the identity \eqref{eq apres t}. Since $\cM^\phi$ is a square integrable martingale, orthogonal to $W$, and since $\cA^\phi_T + \cK^\phi_T \in L^2(\cF^\infty_T)$ and the process $\cA^\phi + \cK^\phi$ is nonnegative and nondecreasing, the comparison Theorem \ref{comparison} gives $\cY^\phi_t \ge U^\phi_t$, recall \eqref{switched BSDE}.\\
    Now, we have
    \begin{align}
      \nonumber \cY^\phi_t &= Y^i_t + \sum_{k \ge 0} \left( Y^{\zeta_{k+1}}_t - Y^{\zeta_k}_t \right) \ind{\tau_{k+1} \le t} \\
      \nonumber &= Y^i_t + \sum_{k \ge 0} \left(Y^{\zeta_{k+1}}_t - \espcond{Y^{\zeta_{k+1}}_t}{\cF^k_{\tau_{k+1}}} \right) \ind{\tau_{k+1} \le t}\\
                           &\label{eq en t} \hspace{1cm} - \sum_{k \ge 0} \left(Y^{\zeta_k}_t - \sum_{j=1}^d P^{\alpha_{k+1}}_{\zeta_{k,j}} Y^j_{\tau_{k+1}} + \bar{c}^{\alpha_{k+1}}_{\zeta_k}\right)\ind{\tau_{k+1}\le t} + A^\phi_t.
    \end{align}
    Since $U^\phi_t \le \cY^\phi_t$ and $\sum_{k \ge 0} \left(Y^{\zeta_k}_t - \sum_{j=1}^d P^{\alpha_{k+1}}_{\zeta_{k,j}} Y^j_{\tau_{k+1}} + \bar{c}^{\alpha_{k+1}}_{\zeta_k}\right)\ind{\tau_{k+1}\le t} \ge 0$, we get
    \begin{align}
      \nonumber U^\phi_t - A^\phi_t &\le Y^i_t + \sum_{k \ge 0} \left(Y^{\zeta_{k+1}}_t - \espcond{Y^{\zeta_{k+1}}_t}{\cF^k_{\tau_{k+1}}} \right) \ind{\tau_{k+1} \le t}\\
                                    &\nonumber \hspace{1cm}- \sum_{k \ge 0} \left(Y^{\zeta_k}_t - \sum_{j=1}^d P^{\alpha_{k+1}}_{\zeta_{k,j}} Y^j_{\tau_{k+1}} + \bar{c}^{\alpha_{k+1}}_{\zeta_k}\right)\ind{\tau_{k+1}\le t} \\
                                    &\le Y^i_t + \sum_{k \ge 0} \left(Y^{\zeta_{k+1}}_t - \espcond{Y^{\zeta_{k+1}}_t}{\cF^k_{\tau_{k+1}}} \right) \ind{\tau_{k+1} \le t}.
    \end{align}
    Using \eqref{mart en t}, we can take conditional expectation on both side with respect to $\cF^0_t$ to obtain the result.\\
2. Lemma \ref{lem-integ-adm} shows that the strategy $\phi^\star$ is admissible. Using \eqref{eq apres t}, since $\cA^{\phi^\star} = 0$ and $\int_{\tau^\star_k \wedge T}^{\tau^\star_{k+1} \wedge T} \ud K^{\zeta^\star_k}_u = 0$ for all $k \ge 0$, we obtain
    \begin{align}
      \cY^{\phi^\star}_s = \xi^{a^\star_T} + \int_s^T f^{a^\star_u}(u,\cY^{\phi^\star}_u,\cZ^{\phi^\star}_u)\ud u - \int_s^T \cZ^{\phi^\star}_u \ud W_u - \int_s^T \ud \cM^{\phi^\star}_u - \int_s^T \ud A^{\phi^\star}_u.
    \end{align}
    By uniqueness Theorem \ref{ex-uni-bsde}, we get that $\cY^{\phi^\star}_t = U^{\phi^\star}_t$, recall \eqref{switched BSDE}.\\
    We also have
    \begin{align*}
      \cY^{\phi^\star}_t &= Y^i_t + \sum_{k \ge 0} \left( Y^{\zeta^\star_{k+1}}_t - Y^{\zeta^\star_k}_t \right) \ind{\tau^\star_{k+1} \le t} \\
                         &= Y^i_t + \cM^{\phi^\star}_t + A^{\phi^\star}_t,
    \end{align*}
    thus $U^{\phi^\star}_t - A^{\phi^\star}_t = \cY^{\phi^\star}_t - A^{\phi^\star}_t = Y^i_t + \cM^{\phi^\star}_t$, and taking conditional expectation gives the result. \eproof
\section{Obliquely Reflected BSDEs associated to randomised switching problems} \label{existence}

In this section, we study the Obliquely Reflected BSDE \eqref{orbsde}-\eqref{orbsde2}-\eqref{orbsde3} associated to the \emph{switching problem with controlled randomisation}. We address the question of existence of such BSDEs. Indeed, as observed in the previous section, under appropriate assumptions, uniqueness follows directly from the control problem representation, see Corollary \ref{co uniqueness RBSDE} and Proposition \ref{prop unicite couts generaux}. We first give some general properties of the domain $\cD$ and identify necessary and sufficient conditions linked to the non-emptiness of its interior. The non-empty interior property is key for our existence result and is not trivially obtained in the setting of signed costs. This is mainly the purpose of Section \ref{section prop du domaine de reflection}.
Then, we prove existence results for the associated BSDE in the  Markovian framework, in Section \ref{sub se markov}, and in the non-Markovian framework, in Section \ref{sub se non markov}, relying on the approach in \cite{chassagneux2018obliquely}. Existence results in \cite{chassagneux2018obliquely} are obtained for general obliquely reflected BSDEs where the oblique reflection is specified through an operator $H$ that transforms, on the boundary of the domain, the normal cone into the oblique direction of reflection. Thus, the main difficulty is to construct this operator $H$ with some specific properties needed to apply the existence theorems of \cite{chassagneux2018obliquely}. 
This task is carried out successfully for the \emph{randomised switching problem} in the Markovian 
framework. We also consider an example of \emph{switching problem with controlled randomisation} in this framework. In the non-Markovian framework, which is more challenging as more properties are required on $H$, we prove the well-posedness of the BSDE for some examples of \emph{randomised switching problem}.

\subsection{Properties of the domain of reflection }
\label{section prop du domaine de reflection}
In this section, we study the domain where the solution of the reflected BSDEs is constrained to take its values. The first result  shows that the domain $\cD$ defined in \eqref{domain} is invariant by translation along the vector $(1,\dots,1)$ and deduces some property for its normal cone. Most of the time, we will thus be able to limit our  study to
\begin{align}\label{eq de D slice}
\Dslice = \cD \cap \set{ y \in \R^d \,|\, y_d = 0}\;.
\end{align}

\begin{Lemma} \label{lem-general} 
For all $x \in \cD$, we have
  \begin{enumerate}
  \item \textcolor{black}{$x+h\sum_{i=1}^d e_i \in \cD$, for all $h \in \mathbb{R}$,}
  \item there is a unique decomposition $x = y^x + z^x$ with $y^x \in \Dslice$ and $z^x \in \R \left(\sum_{i=1}^d e_i\right)$,
  \item if $x \in \cD$, we have $\cC(x) \subset \{v \in \R^d : \sum_{i=1}^d v_i = 0\}$,
  \item $\cC(x) = \cC(y_x)$, where $y_x$ is from the above decomposition.
  \end{enumerate}
\end{Lemma}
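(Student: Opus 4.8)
The plan is to exploit the special structure of the constraints defining $\cD$: each constraint reads $y_i \ge \sup_{u\in\cC}\{\sum_j P^u_{i,j}y_j - \bar c_i^u\}$, and since each $P^u_{i,\cdot}$ is a probability vector, the function $y \mapsto \sum_j P^u_{i,j}y_j$ is invariant under adding a constant multiple of $\mathbf 1 := \sum_{i=1}^d e_i$ (the shift cancels on both sides of the inequality).

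First I would prove item (1): for $x \in \cD$, $h\in\R$ and any $i$, write $(x+h\mathbf 1)_i = x_i + h$ and $\sum_j P^u_{i,j}(x+h\mathbf 1)_j - \bar c_i^u = \sum_j P^u_{i,j}x_j + h - \bar c_i^u$, since $\sum_j P^u_{i,j}=1$. Taking $\sup_{u}$ on the right and using $x\in\cD$ gives the claimed inequality, so $x+h\mathbf 1\in\cD$. Item (2) is then elementary linear algebra: the decomposition $x = y^x + z^x$ with $y^x_d = 0$ forces $z^x = x_d\,\mathbf 1$ and $y^x = x - x_d\,\mathbf 1$; uniqueness is clear since $\R\mathbf 1 \cap \{y_d = 0\} = \{0\}$, and $y^x \in \Dslice$ follows from item (1) applied with $h = -x_d$ together with $y^x_d=0$.

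For item (3), fix $x\in\cD$ and $v\in\cC(x)$. By item (1), both $x + \mathbf 1$ and $x - \mathbf 1$ lie in $\cD$, so the defining inequality of the normal cone, $v^\top(z-x)\le 0$ for all $z\in\cD$, applied to $z = x\pm\mathbf 1$ gives $\pm v^\top\mathbf 1 \le 0$, i.e. $\sum_{i=1}^d v_i = v^\top\mathbf 1 = 0$. For item (4), I would show the two normal cones coincide by double inclusion, using that $\cD = \Dslice + \R\mathbf 1$ (a consequence of items (1)--(2)). If $v \in \cC(x)$, then for any $w\in\Dslice\subset\cD$ we have $v^\top(w - x)\le 0$; but $w - y^x = (w-x) + z^x$ and $v^\top z^x = x_d\,v^\top\mathbf 1 = 0$ by item (3), so $v^\top(w - y^x)\le 0$, giving $v\in\cC(y^x)$. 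Conversely, if $v\in\cC(y^x)$, then for any $z = w + h\mathbf 1 \in \cD$ with $w\in\Dslice$, $v^\top(z - x) = v^\top(w - y^x) + v^\top((h - x_d)\mathbf 1)$; the first term is $\le 0$ and the second vanishes once we know $v^\top\mathbf 1 = 0$, which holds because $\cC(y^x)$ also consists of vectors orthogonal to $\mathbf 1$ by item (3) applied at $y^x$. Hence $v\in\cC(x)$.

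I do not expect a serious obstacle here; the only mild subtlety is making sure one invokes item (3) (orthogonality of normal vectors to $\mathbf 1$) at the right point when proving item (4), and that item (3) is applied at both $x$ and $y^x$. Everything else is a direct consequence of the affine invariance of the constraints under shifts along $\mathbf 1$.
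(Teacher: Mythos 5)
Your proof is correct and follows essentially the same route as the paper's: invariance of each defining constraint under shifts along $\mathbf{1}=\sum_{i=1}^d e_i$ (because each $P^u_{i,\cdot}$ is a probability vector), the forced decomposition $z^x = x_d\,\mathbf{1}$, and the use of $x\pm\mathbf{1}\in\cD$ to obtain $v^\top\mathbf{1}=0$ for $v\in\cC(x)$. The only cosmetic difference is in item (4): the paper proves the single inclusion $\cC(w)\subset\cC(w+a\,\mathbf{1})$ and applies it with $\pm a$, whereas you argue by double inclusion; in your forward inclusion you should note explicitly (as you already do in the converse) that testing the normal-cone inequality against $\Dslice$ suffices because $v^\top\mathbf{1}=0$ lets you pass to all of $\cD$, but this is the same one-line computation and not a genuine gap.
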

\begin{proof}
 \textcolor{black}{1. If $i \in \{1,\dots,d\}$, we have
    \begin{align}
      \nonumber x_i-h =  &\ge \max_{u \in \cC} \left(\sum_{j=1}^d P^u_{i,j} x_j - c_{i}^{u}\right) - h = \max_{u \in \cC} \left( \sum_{j=1}^d P^u_{i,j} (x_j - h) - c_{i}^{u}\right),
    \end{align}
    and thus $x+h\sum_{i=1}^d e_i \in \cD$.\\    
2.  We set $y^x = x - z^x$ with $z^x = x_d \sum_{i=1}^d e_i$. It is clear that $y^x_d = 0$, and $y^x \in \cD$ thanks to the first point. 
   The uniqueness is clear since we have necessarily $z^x = x_d \sum_{i=1}^d e_i$.\\
3.  Point 1. shows that $x \pm \sum_{i=1}^d e_i \in \cD$. Let $v \in \cC(x)$. }Then we have, by definition,
    \begin{align*}
      0 &\ge v^\top (x \pm \sum_{i=1}^d e_i - x) = \pm v^\top \sum_{i=1}^d e_i = \pm \sum_{i=1}^d v_i,
    \end{align*}
    and thus, $\sum_{i=1}^d v_i = 0$.\\
4. Let $x \in \cD$. Since $x = y^x + x_d \sum_{i=1}^d e_i$, it is enough to show that for all $w \in \cD$ and all $a \in \R, \cC(w) \subset \cC(w + a \sum_{i=1}^d e_i)$.\\
    Let $v \in \cC(w)$. We have, for all $z \in \cD$, since $\sum_{i=1}^d v_i = 0$ and $v^\top(z-w) \le 0$,
    \begin{align*}
      v^\top(z-(w+a\sum_{i=1}^d e_i)) &= v^\top(z-w) - a v^\top \sum_{i=1}^d e_i = v^\top(z-w) \le 0,
    \end{align*}
    and thus $v \in \cC(w+a\sum_{i=1}^d e_i)$. \eproof
\end{proof}

\vspace{2mm}

\noindent Before studying the domain of reflection, we introduce three examples in dimension $3$ of switching problems. On Figure  \ref{figure exemples domaines}, we draw the domain $\Dslice$ for these three different switching problems to illustrate the impact of the various controlled randomisations on the shape of the reflecting domain.
\begin{description}
 \item[Example 1:] Classical switching problem with a constant cost $1$, i.e. $\cC=\{1,2\}$,
 \begin{align*}
P^1 = 
\left(
\begin{array}{ccc}
0 & 1  & 0 \\
0  & 0 & 1 \\
1 & 0 & 0 
\end{array}
\right)
\;,\;
P^2 = 
\left(
\begin{array}{ccc}
0 & 0  & 1 \\
1  & 0 & 0 \\
0 & 1 & 0 
\end{array}
\right)
\;,\;
\bar{c}^1 =\left(
\begin{array}{c}
1  \\
1  \\
1 \\
\end{array}
\right)
\;
\text{ and }
\;
\bar{c}^2=\left(
\begin{array}{c}
1  \\
1   \\
1 \\
\end{array}
\right).
\end{align*}
 \item[Example 2:] Randomised switching problem with $\cC=\{0\}$,
 \begin{align*}
P^0 = 
\left(
\begin{array}{ccc}
0 & 1/2  & 1/2 \\
1/2  & 0 & 1/2 \\
1/2 & 1/2 & 0 
\end{array}
\right)
\;\text{ and } \;
\bar{c}^0 =\left(
\begin{array}{c}
1  \\
1  \\
1 \\
\end{array}
\right).
\end{align*}
 \item[Example 3:] Switching problem with controlled randomisation where $\cC=[0,1]$,  
\begin{equation}
\label{transitions et couts exemple controle} 
 P^u=    \left(\begin{array}{ccc}
            0 & u & 1-u\\
            1-u & 0 & u\\
            u & 1-u & 0\\
          \end{array}\right) \;\text{ and } \;
 \bar{c}^0 =\left(
\begin{array}{c}
1-u(1-u)  \\
1-u(1-u)  \\
1-u(1-u) \\
\end{array}
\right) \quad \forall u \in [0,1].
\end{equation}
In this example, the transitions matrix are given by convex combinations of transitions matrix  of Example 1.
\end{description}

\begin{figure}[htp]
 \centering
 \begin{center}
 \includegraphics[width=10cm]{./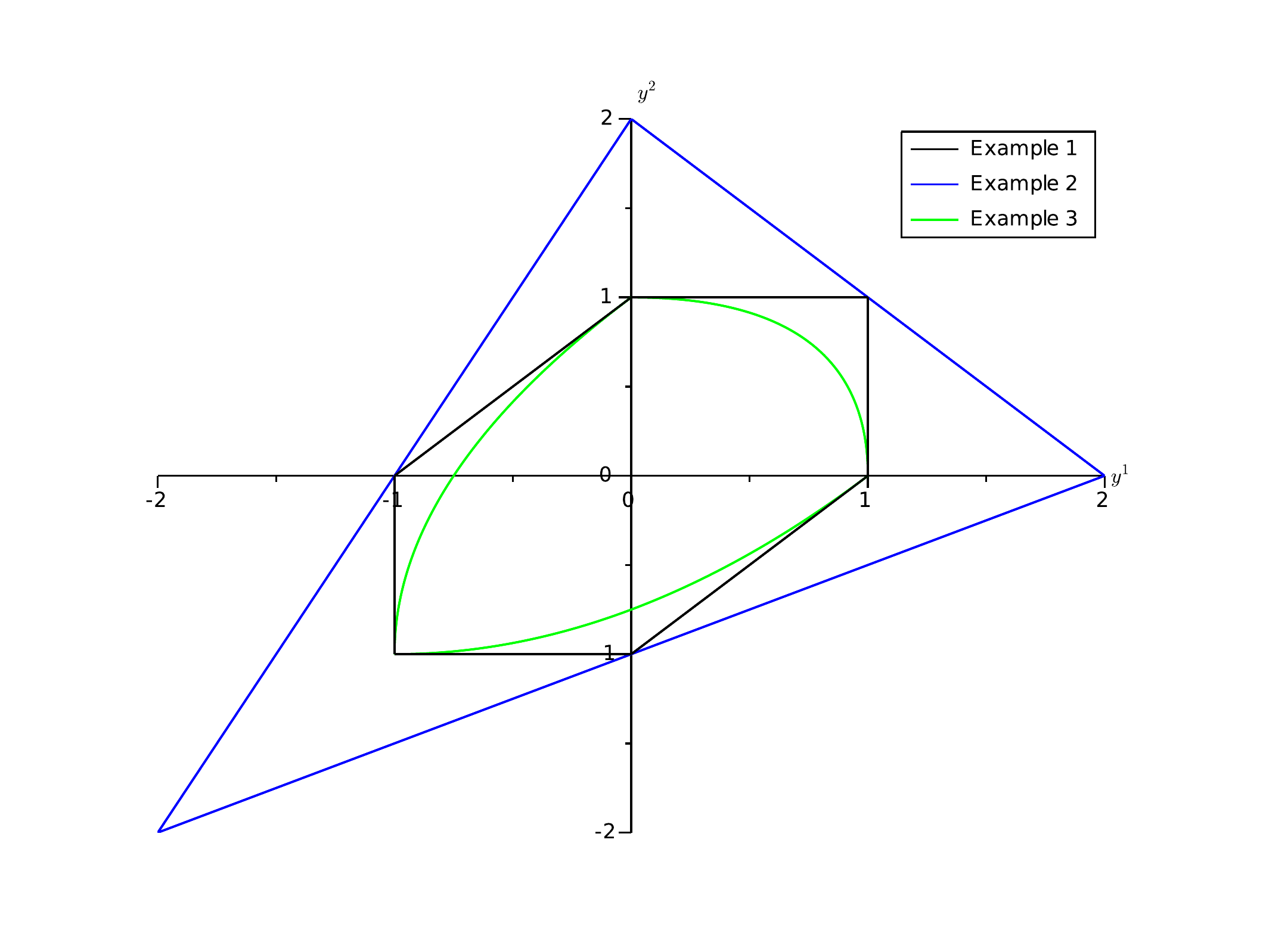}
\end{center}
 \caption{Domaine $\Dslice$ for three examples of switching problems with or without controlled randomisation.}
 \label{figure exemples domaines}
\end{figure}

\textcolor{black}{
\begin{Remark}
 For the randomised switching problem, in any dimension, we can replace $(P_{i,j})_{1 \le j \le d}$ by $\left( \frac{P_{i,j}}{1-P_{i,i}}\1_{i \neq j}\right)_{1 \le j \le d}$ and $\bar{c}_i$ by $\frac{\bar{c}_i}{1-P_{i,i}}$ as soon as $P_{i,i}<1$, without changing $\cD$. The factor $({1-P_{i,i}})^{-1}$ in the cost has to be seen as the expectation of the geometric law of the number of trials needed to exit state $i$. So assuming that diagonal terms are zero is equivalent to assume that $P_{i,i}<1$, for all $1 \le i \le d$.
\end{Remark}
}

\subsubsection{The uncontrolled case}

In this part, we study the domain $\cD$ for a fixed control, which is fixed to be $0$, without loss of generality.
The properties of the domain are closely linked in this case to the homogeneous Markov chain, denoted $X$, associated to the stochastic matrix $P$. For this part, we thus work with the following assumption.
\begin{Assumption} \label{ass-uncontrolled-irred}
  The set of control is reduced to $\cC = \{0\}$.
  The Markov chain $X$ with stochastic matrix $P = (P_{i,j})_{1 \le i,j \le d} := (P^0_{i,j})_{1 \le i,j \le d}$ is irreducible.
\end{Assumption}
\noindent Our main goal is to find necessary and sufficient conditions to characterize the non-emptiness of the domain $\cD$. To this end, we will introduce some quantities related to the Markov Chain $X$ and the costs vector $\bar{c}:=\bar{c}^0$.  

\vspace{2mm}
\noindent For $1 \le i,j \le d$, we consider  the  expected cost along an ``excursion'' from state $i$ to $j$:
\begin{align} \label{eq de expected cost}
\bar{C}_{i,j} := \esp{\sum_{n=0}^{\tau_j - 1} \bar{c}_{X_n} \Big| X_0 = i}
\textcolor{black}{=\esp{\sum_{n=0}^{\tau_j-1}c_{X_n,X_{n+1}} \Big | X_0 = i}}
\end{align}
where
\begin{align*}
\tau_j := \inf \set{ t \ge 1\,|\, X_t = j}\;.
\end{align*}
We also define 
\begin{align}  \label{eq de C}
C_{j,j} := 0 \; \text{ and } \; C_{i,j} = \bar{C}_{i,j} \text{ for } 1 \le i \neq j \le d \;.
\end{align}
We observe that, introducing $\tilde{\tau}_j := \inf \set{ t \ge 0\,|\, X_t = j}$, the cost $C$ rewrites  as $\bar{C}$:
\begin{align*}
{C}_{i,j} := \esp{\sum_{n=0}^{\tilde{\tau}_j } \bar{c}_{X_n}\1_{\set{X_n \neq j}} \Big| X_0 = i}\;, \text{ for } 1 \le i,j \le d\,.
\end{align*}
\textcolor{black}{Let us remark that $\esp{\tau+\tilde{\tau}}<+\infty$ and so $\bar{C}$ and $C$ are finite since the Markov chain is irreducible recurrent.} 
\\
\noindent Setting  $Q = I_d - P$,  the domain $\cD$, defined in \eqref{domain}, rewrites:
\begin{align} \label{eq de domain with Q}
  \cD 
  & = \{x \in \R^d : Qx + c \succcurlyeq 0\}.
\end{align}
Since $P$ is irreducible, it is well known (see for example \cite{B15}, Section 2.5) that for all $1 \le i, j \le d$, the matrix $Q^{(i,j)}$ is invertible, and that we have 
\begin{align}\label{eq inv meas}
\tilde{\mu}_i := \det Q^{(i,i)} = (-1)^{i+j} \det Q^{(i,j)}> 0 .
\end{align}
Moreover, 
$\tilde{\mu} Q = 0$ with $\tilde\mu = (\tilde\mu_i)_{i=1}^d$, i.e. ${\mu} := \frac{\tilde\mu}{\sum_{i=1}^d \tilde\mu_i}$ is the unique invariant probability measure for  the Markov chain with transition matrix $P$. 

\noindent We now obtain some necessary conditions for the domain to be non-empty.
Let us first observe that
\begin{Lemma}
The mean costs $C$ are given for $1 \le i \neq j \le d$ by
\begin{align}\label{eq carac C}
C_{i,j} = \left((Q^{(j,j)})^{-1} \bar{c}^{(j)}\right)_{i-\1_{\set{i>j}} } \;.
\end{align}
\end{Lemma}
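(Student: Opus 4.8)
The plan is to identify $C_{i,j}$ as the solution of a linear system obtained from first-step analysis on the Markov chain, and then to recognize that this system is exactly the one solved by $(Q^{(j,j)})^{-1}\bar{c}^{(j)}$ after deleting the $j$-th coordinate.

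First I would fix the target state $j$ and, for $i \neq j$, write $C_{i,j} = \esp{\sum_{n=0}^{\tilde\tau_j}\bar c_{X_n}\1_{\set{X_n\neq j}} \mid X_0 = i}$ as in the reformulation given just before the statement. Conditioning on the first step $X_1$ and using the strong Markov property (together with $\esp{\tilde\tau_j}<\infty$, which was noted to follow from irreducibility/recurrence, so all sums converge absolutely and the interchange is justified), one gets, for $i\neq j$,
\begin{align*}
C_{i,j} = \bar c_i + \sum_{k=1}^d P_{i,k} C_{k,j} = \bar c_i + \sum_{k\neq j} P_{i,k} C_{k,j},
\end{align*}
where the last equality uses $C_{j,j}=0$. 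In matrix form, restricting to the index set $\set{1,\dots,d}\setminus\set{j}$ and writing $v := (C_{i,j})_{i\neq j}\in\R^{d-1}$, this reads $(I_{d-1} - P^{(j,j)})\, v = \bar c^{(j)}$, i.e. $Q^{(j,j)} v = \bar c^{(j)}$, since deleting row $j$ and column $j$ from $Q = I_d - P$ gives $Q^{(j,j)} = I_{d-1} - P^{(j,j)}$.

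Then I would invoke the fact, recalled in the excerpt (with reference to \cite{B15}), that $Q^{(j,j)}$ is invertible because $P$ is irreducible. Hence $v = (Q^{(j,j)})^{-1}\bar c^{(j)}$, and reading off the coordinate corresponding to the original index $i$ — which in the size-$(d-1)$ vector is the entry numbered $i - \1_{\set{i>j}}$, matching the paper's notation $\ls{i}{j}$ — gives exactly \eqref{eq carac C}. For completeness I would also check the degenerate edge: when $d=1$ there is nothing to prove since the statement concerns $i\neq j$; and one should note that the case $i=j$ is handled by definition $C_{j,j}=0$ and is not claimed by the formula.

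The only genuine subtlety — hardly an obstacle but the point that needs a word of care — is the justification of the first-step decomposition: one must argue that $C_{i,j}$ is finite and that conditioning on $X_1$ is legitimate termwise. This is where $\esp{\tau_j + \tilde\tau_j}<\infty$ (irreducible recurrent chain on a finite state space) is used: it makes $\sum_{n\ge 0}\bar c_{X_n}\1_{\set{X_n\neq j}}$ integrable, so Fubini/dominated convergence applies and the linear system is valid. Everything else is linear algebra: $Q^{(j,j)} = I_{d-1}-P^{(j,j)}$, its invertibility, and bookkeeping of the reindexing $i\mapsto \ls{i}{j}$.
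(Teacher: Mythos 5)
Your proposal is correct and follows essentially the same route as the paper: a first-step (one-step conditioning) analysis yielding the linear relation $C_{i,j} = \bar c_i + \sum_{\ell\neq j} P_{i,\ell} C_{\ell,j}$, which is then read as the system $Q^{(j,j)} C_{\cdot,j} = \bar c^{(j)}$ and inverted using the irreducibility of $P$. The only cosmetic difference is that the paper first derives the recursion for $\bar C_{i,j}$ (valid for all $i$, including $i=j$, which it reuses later) and then specializes to $C$, whereas you work directly with $C$ via the $\tilde\tau_j$ reformulation; both are equivalent here.
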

\proof
1. We first show that for $1 \le i,j \le d$:
\begin{align}\label{eq dpp bar C}
\bar{C}_{i,j} = \bar{c}_i + \sum_{\ell \neq j} \bar{C}_{\ell, j}P_{i,\ell} \;.
\end{align}
From \eqref{eq de expected cost}, we have
\begin{align*}
\bar{C}_{i,j} &= \esp{\sum_{n=0}^{+\infty} \bar{c}_{X_n} \1_{\set{n<\tau_j}}\Big|X_0 = i} = \bar{c}_i + \esp{\sum_{n=1}^{+\infty} \bar{c}_{X_n} \1_{\set{n<\tau_j}}\Big|X_0 = i}.
\end{align*}
Then, since for all $n \ge 1$, $\set{X_1 = j} \cap \set{n<\tau_j} = \emptyset$, we get
\begin{align*}
\bar{C}_{i,j} &=\bar{c}_i + \esp{\sum_{n=1}^{+\infty} \sum_{\ell \neq j} \bar{c}_{X_n}\1_{\set{X_1 = \ell}} \1_{\set{n<\tau_j}}\Big|X_0 = i}.
\end{align*}
We compute that, for $\ell \neq j$,
\begin{align*}
\esp{\sum_{n=1}^{+\infty}  \bar{c}_{X_n}\1_{\set{X_1 = \ell}} \1_{\set{n<\tau_j}}\Big|X_0 = i}
= \esp{\sum_{n=1}^{+\infty}  \bar{c}_{X_n} \1_{\set{n<\tau_j}}\Big|X_1 = \ell}P_{i,\ell}.
\end{align*}
The proof of  \eqref{eq dpp bar C} is then concluded observing that from the Markov property,
$$\esp{\sum_{n=1}^{+\infty}  \bar{c}_{X_n} \1_{\set{n<\tau_j}}\Big|X_1 = \ell} = \bar{C}_{\ell, j}\,.$$
2. From \eqref{eq dpp bar C}, we deduce, recall Definition \eqref{eq de C}, that, for $i \neq j$,
\begin{align}\label{eq dpp C}
{C_{i,j}} = \bar{c}_i + \sum_{\ell \neq j} {C}_{\ell,j}P_{i,\ell} \;.
\end{align}
This equality simply rewrites $Q^{(j,j)} C_{\cdot,j} = \bar{c}^{(j)}$, which concludes the proof.
\eproof 



\vspace{2mm}
\begin{Proposition}\label{pr necessary conditions} Assume $\cD$ is non-empty.  Then,
\begin{enumerate}
\item the mean cost with respect to the invariant measure is non-negative, namely:
\begin{align}\label{eq cond mean cost non empty}
{\mu} \bar{c} \ge 0.
\end{align}
\item For all $1 \le i,j \le d$,
\begin{align}\label{eq cond C non empty}
\min_{1 \le i,j \le d} \left(C_{i,j} + C_{j,i} \right) \ge 0.
\end{align}
\item The set $\Dslice$ is compact in $\{y \in \R^d | y_d = 0\}$.
\end{enumerate}
Moreover, if $\cD$ has non-empty interior, then
\begin{align}\label{eq cond cost non empty interior}
\mu \bar{c} > 0\;\text{ and } \;\min_{1 \le i \neq j \le d} \left(C_{i,j} + C_{j,i} \right) > 0\;.
\end{align}
\end{Proposition}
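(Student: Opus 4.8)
The plan is to prove the three "necessary conditions" (plus the stronger conclusions under non-empty interior) by exploiting the representation \eqref{eq de domain with Q} of $\cD$ via $Q = I_d - P$ together with the identity $\tilde\mu Q = 0$, and the characterisation \eqref{eq carac C} of the mean costs $C_{i,j}$.

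\textbf{Step 1 (mean cost condition).} Take any $x \in \cD$, so that $Qx + \bar c \succcurlyeq 0$. Multiplying on the left by the row vector $\tilde\mu = (\tilde\mu_i) \succ 0$ (all strictly positive by \eqref{eq inv meas}) preserves the inequality, giving $\tilde\mu Q x + \tilde\mu \bar c \ge 0$. Since $\tilde\mu Q = 0$, this reads $\tilde\mu \bar c \ge 0$, hence $\mu \bar c \ge 0$ after normalisation. This is \eqref{eq cond mean cost non empty}.

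\textbf{Step 2 (pairwise excursion-cost condition).} Fix $i \neq j$. I would use the probabilistic interpretation: for $x \in \cD$, iterating the defining inequality $x_i \ge \sum_\ell P_{i,\ell} x_\ell - \bar c_i$ along the Markov chain $X$ started at $i$ and stopped at the first hitting time $\tau_j$ (which is a.s. finite and integrable by irreducibility/recurrence) yields, by optional stopping / a telescoping submartingale argument, $x_i \ge x_j - \bar C_{i,j} = x_j - C_{i,j}$. Concretely, the process $x_{X_n} + \sum_{m=0}^{n-1}\bar c_{X_m}$ is a submartingale, and taking expectations at time $\tau_j$ gives $x_i \ge \E[x_{X_{\tau_j}}] - \bar C_{i,j} = x_j - C_{i,j}$. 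By symmetry $x_j \ge x_i - C_{j,i}$. Adding the two inequalities gives $0 \ge -(C_{i,j}+C_{j,i})$, i.e. $C_{i,j} + C_{j,i} \ge 0$; since $C_{i,i}=0$ this also covers the diagonal trivially, proving \eqref{eq cond C non empty}. (Alternatively, one can derive $x_i - x_j \ge -C_{i,j}$ purely algebraically from $Q^{(j,j)}C_{\cdot,j} = \bar c^{(j)}$ and $Qx + \bar c \succcurlyeq 0$ by applying $(Q^{(j,j)})^{-1}$, which is a nonnegative matrix because $P$ restricted away from $j$ is sub-stochastic with spectral radius $<1$; this avoids probabilistic machinery but requires justifying nonnegativity of the inverse.)

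\textbf{Step 3 (compactness of $\Dslice$).} By Step 2's inequalities $x_i - x_j \ge -C_{i,j}$ and $x_j - x_i \ge -C_{j,i}$, every $x\in\cD$ satisfies $-C_{j,i} \le x_i - x_j \le C_{i,j}$ for all $i,j$; in particular, taking $j = d$, all coordinates $x_i$ are bounded between $-C_{d,i}$ and $C_{i,d}$ on the slice $\{x_d = 0\}$. Hence $\Dslice$ is bounded; it is also closed as the intersection of the closed set $\cD$ (finitely many non-strict linear inequalities) with a hyperplane, so $\Dslice$ is compact. This is item 3.

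\textbf{Step 4 (strict versions under non-empty interior).} If $\cD$ has non-empty interior, pick $x$ in the interior, so $Qx + \bar c \succ 0$ strictly. Then $\tilde\mu(Qx+\bar c) > 0$ gives $\mu\bar c > 0$. For the pairwise condition: strictness $x_i > \sum_\ell P_{i,\ell}x_\ell - \bar c_i$ propagates through the submartingale argument to give $x_i > x_j - C_{i,j}$ strictly (the excursion visits at least one state with slack, or more simply the submartingale increment at time $0$ is strictly positive with positive probability), and likewise $x_j > x_i - C_{j,i}$; adding gives $C_{i,j} + C_{j,i} > 0$ for $i \neq j$. This yields \eqref{eq cond cost non empty interior}.

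\textbf{Main obstacle.} The delicate point is Step 2: turning the coordinatewise inequality defining $\cD$ into the excursion-cost inequality $x_i - x_j \ge -C_{i,j}$ rigorously. One must either (a) justify the optional-stopping / submartingale convergence argument, which needs integrability of $\tau_j$ and of $\sum_{m<\tau_j}\bar c_{X_m}$ (available by recurrence, already remarked in the text), handling that $\bar c$ may now be signed so the "submartingale" interpretation is less automatic; or (b) justify that $(Q^{(j,j)})^{-1}$ is entrywise nonnegative so that applying it to $Q^{(j,j)}x^{(j)} + \bar c^{(j)} \succcurlyeq Q^{(j,j)}$-something preserves sign — which is where the Neumann-series argument $(I - \hat P)^{-1} = \sum_k \hat P^k \succcurlyeq 0$ for the defective substochastic matrix $\hat P$ obtained by deleting row/column $j$ must be spelled out. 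The rest is linear algebra and standard Markov chain facts already quoted in the excerpt.
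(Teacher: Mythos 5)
Your proposal is correct and, for parts 1, 3 and the strict refinements, follows essentially the same path as the paper: multiplying $Qx+\bar c\succcurlyeq 0$ by the positive left null vector $\mu$ of $Q$, deducing compactness of $\Dslice$ from the two-sided bound on $x_i-x_j$, and upgrading to strict inequalities by perturbing an interior point. The one place where you genuinely diverge is the key estimate $x_i-x_j\ge -C_{i,j}$. The paper proves it purely algebraically: it writes the constraints in the coordinates $\pi^j(x)_k=x_k-x_j$ as $Q^{(j,j)}\pi^j(x)\succcurlyeq -\bar c^{(j)}$ and applies $(Q^{(j,j)})^{-1}=\sum_{k\ge0}(P^{(j,j)})^k\succcurlyeq 0$ together with the identity $C_{\cdot,j}=(Q^{(j,j)})^{-1}\bar c^{(j)}$. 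Your primary route is the probabilistic one (optional stopping along the excursion to $j$), with the algebraic route mentioned as an alternative; both work, and the probabilistic version is arguably more transparent about \emph{why} $C_{i,j}$ appears, while the paper's version sidesteps all integrability and optional-stopping justifications for a possibly signed $\bar c$ (it only needs the Neumann-series nonnegativity, which it states without proof, exactly the point you flag). Your "main obstacle" paragraph correctly identifies that either justification must be supplied. One minor slip: the process you should invoke is $x_{X_n}-\sum_{m=0}^{n-1}\bar c_{X_m}$, which is a \emph{supermartingale} under $Qx+\bar c\succcurlyeq 0$ (equivalently $-x_{X_n}+\sum_{m<n}\bar c_{X_m}$ is a submartingale); the process $x_{X_n}+\sum_{m<n}\bar c_{X_m}$ you name is not a submartingale in general. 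The inequality you then write down, $x_i\ge \E[x_{X_{\tau_j}}]-\bar C_{i,j}$, is the one delivered by the correctly signed supermartingale, so the conclusion stands, but the intermediate statement should be fixed.
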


\proof
1.a We first show the key relation:
  \begin{align} \label{eq encadrement}
    - C_{i,j} \le x_i - x_j \le C_{j,i} \;,\; \text{ for } 1 \le i,j \le d.
  \end{align}
  For $j \in \{1,\dots,d\}$ and $x \in \R^d$, we introduce $\pi^j(x) \in \R^{d-1}$, given by,
  $$\pi^j(x)_k = x_{ \rs{k}{j} }  - x_j\;,\;\; k   \in \{1,\dots,d-1\}. $$
  Let $x \in \cD$ and $j \in \{1, \dots, d\}$. For all $i \in \{1, \dots, d\}, i \neq j$, we have, by definition of $\cD$ and since $\sum_{k=1}^d P_{i,k} = 1$,
    \begin{align*}
      x_i - x_j \ge \sum_{k=1}^d P_{i,k} \left(x_k - x_j\right) - \bar{c}_i. 
    \end{align*}
    Thus $\pi_j(x)$ satisfies to
    \begin{align*}
      Q^{(j,j)} \pi^j(x) \succcurlyeq - \bar{c}^{(j)}.
    \end{align*}
    Since $\left(Q^{(j,j)}\right)^{-1} = \sum_{k \ge 0} \left(P^{(j,j)}\right)^k \succcurlyeq 0$, we obtain, using inequality \eqref{eq carac C}
    \begin{align} \label{def C}
      \pi^j(x) \succcurlyeq  - \left(Q^{(j,j)}\right)^{-1}\bar{c}^{(j)} = - C^{(j)}_{\cdot,j},
    \end{align}
    which means $x_i - x_j \ge - C_{i,j}$ for all $i \neq j$.\\
    Let $1 \le i \neq j \le d$. The precedent reasoning gives $x_i - x_j \ge - C_{i,j}$ and $x_j - x_i \ge - C_{j,i}$, thus \eqref{eq encadrement} is proved.\\
From  \eqref{eq encadrement}, we straightforwardly obtain  \eqref{eq cond C non empty} and the fact that $\Dslice$ is compact in $\{y \in \R^d : y_d = 0\}$.
\\ 1.b Since $\cD$ is non empty, the following holds for some $x \in \R^d$, recalling \eqref{eq de domain with Q},
\begin{align*}
Qx + \bar{c}  \succcurlyeq 0
\end{align*}
Multiplying by ${\mu}$ the previous inequality, we obtain \eqref{eq cond mean cost non empty}, since ${\mu}Q = 0$.
\\
2. Assume now that $\cD$ has non-empty interior and 
consider $x \in \overset{\circ}{\cD}$. Then, for all $1 \le i \le d$, we have that $x - \epsilon e_i$  belongs to $\cD$ for  $\epsilon > 0$ small enough. Thus, we get
\begin{align*}
x_i - \epsilon \ge \sum_{\ell = 1}^d P_{i,\ell} x_{\ell} - \epsilon P_{i,i} - \bar{c}_i
\end{align*}
and then
\begin{align*}
Qx+\bar{c} \succcurlyeq \epsilon \min_{1 \le i \le d}(1-P_{i,i}) \sum_{\ell = 1}^d e_\ell.
\end{align*}
Since $P$ is irreducible, $\min_{1 \le i \le d}(1-P_{i,i})  > 0$, and multiplying by ${\mu}$ both sides of the previous inequality we obtain ${\mu} \bar{c}>0$.
\\
For any $j \neq i$, since $x - \epsilon e_i \in \cD$, we deduce from \eqref{eq encadrement}, $-C_{i,j}+\epsilon \le x_i - x_j$. Using again \eqref{eq encadrement}, we get $-C_{i,j}+\epsilon \le C_{j,i}$. This proves the right hand side of \eqref{eq cond cost non empty interior}. \eproof
%

\vspace{2mm}
\noindent The next lemma, whose proof is postponed to  Appendix \ref{proof le costly round-trip}, links the condition \eqref{eq cond mean cost non empty} to costly round-trip.
\begin{Lemma}\label{le costly round-trip}
 The followings hold, for $1 \le j \le d$,
\begin{align} \label{eq barCjj linked to mu c}
\bar{C}_{jj} = \frac{\mu \bar{c}}{\mu_j} \;,
\end{align}
and, for $1 \le i \neq j \le d$,
\begin{align}\label{eq Cij+Cji linked to mu c}
C_{i,j} + C_{j,i} = \frac{\mu \bar{c}}{\mu_i}  \left(\left[Q^{(j,j)}\right]^{-1}\right)_{i^{(j)},i^{(j)}} \;.
\end{align}
\end{Lemma}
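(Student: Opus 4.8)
The plan is to exploit the probabilistic meaning of $\bar{C}_{jj}$ as an expected cost accumulated over a full excursion that returns to $j$, and to relate this to the invariant measure via the standard renewal/ergodic identity for Markov chains.

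First I would establish \eqref{eq barCjj linked to mu c}. Recall $\bar{C}_{jj} = \esp{\sum_{n=0}^{\tau_j - 1} \bar{c}_{X_n} \mid X_0 = j}$, which is the expected total cost over one excursion from $j$ back to $j$. The classical ratio-ergodic identity for an irreducible recurrent chain states that for any function $g$,
\begin{align*}
\esp{\sum_{n=0}^{\tau_j - 1} g(X_n) \,\Big|\, X_0 = j} = \frac{1}{\mu_j}\sum_{k=1}^d \mu_k g(k)\;,
\end{align*}
which is just the statement that $\mu_j \esp{\sum_{n=0}^{\tau_j-1} \ind{X_n = k} \mid X_0 = j} = \mu_k$ (the expected number of visits to $k$ during an excursion from $j$, weighted by $\mu_j$, equals $\mu_k$). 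Applying this with $g = \bar{c}$ gives $\bar{C}_{jj} = \frac{\mu \bar{c}}{\mu_j}$ directly. Alternatively, if one prefers to stay purely linear-algebraic and self-contained, one can use the dynamic programming relation \eqref{eq dpp bar C} specialized to $i = j$, namely $\bar{C}_{jj} = \bar{c}_j + \sum_{\ell \ne j} \bar{C}_{\ell,j} P_{j,\ell}$, together with $Q^{(j,j)} C_{\cdot,j} = \bar{c}^{(j)}$ from the preceding lemma, and then contract with $\tilde\mu$ using $\tilde\mu Q = 0$; the point is that $\bar{C}_{jj} = \bar{c}_j + (P C_{\cdot,j})_j$ while for $i \ne j$, $C_{i,j} = \bar{c}_i + (P C_{\cdot,j})_i - P_{i,j}\cdot 0$, so $\bar{C}_{jj} e_j + (\text{stuff on coords} \ne j) = \bar{c} + P C_{\cdot,j} - C_{\cdot,j}$ reorganizes into $Q C_{\cdot,j} = \bar{c} - \bar{C}_{jj} e_j$ (using $C_{jj}=0$); multiplying on the left by $\tilde\mu$ kills the left side and yields $\mu \bar{c} = \bar{C}_{jj}\,\mu_j$.

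Next I would prove \eqref{eq Cij+Cji linked to mu c}. Here the idea is to decompose a round trip $i \to j \to i$: an excursion from $i$ that returns to $i$ can be split at the first visit to $j$, so the expected cost of a round-trip through $j$ should compare to $\bar{C}_{ii} = \mu\bar{c}/\mu_i$. More precisely, $C_{i,j} + C_{j,i}$ is the expected cost of going from $i$ to $j$ (not counting cost at $j$ on arrival in the bookkeeping of $C$) plus from $j$ to $i$. I would relate this to $\bar C_{ii}$ by the strong Markov property: starting from $i$, run until hitting $j$, then continue until returning to $i$; this path has expected cost $C_{i,j} + C_{j,i}$ if the chain actually visits $j$ before returning to $i$, but a genuine excursion from $i$ may return to $i$ without ever visiting $j$. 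The cleaner route is algebraic: from $Q^{(j,j)} C_{\cdot,j} = \bar c^{(j)}$ we have $C_{i,j} = \big((Q^{(j,j)})^{-1}\bar c^{(j)}\big)_{i^{(j)}}$, and symmetrically $C_{j,i} = \big((Q^{(i,i)})^{-1}\bar c^{(i)}\big)_{j^{(i)}}$. Using the cofactor identities \eqref{eq inv meas} — $\tilde\mu_i = \det Q^{(i,i)}$ and $(-1)^{i+j}\det Q^{(i,j)} = \tilde\mu_i$ — together with Cramer's rule and the adjugate formula for $(Q^{(j,j)})^{-1}$, one expresses both entries in terms of minors of $Q$, and a determinantal identity (Jacobi's identity / Dodgson condensation relating $\det Q^{(i,i)}$, $\det Q^{(j,j)}$, $\det Q^{(ij,ij)}$ and $\det Q^{(i,j)}\det Q^{(j,i)}$) collapses the sum $C_{i,j}+C_{j,i}$ into $\frac{\mu\bar c}{\mu_i}\big([Q^{(j,j)}]^{-1}\big)_{i^{(j)},i^{(j)}}$ after using \eqref{eq barCjj linked to mu c}.

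The main obstacle I anticipate is the second identity \eqref{eq Cij+Cji linked to mu c}: getting the precise combinatorial/determinantal bookkeeping right, i.e. correctly tracking which minors of $Q$ appear, handling the index shifts $i^{(j)}$, $j^{(i)}$ coming from deleting rows/columns, and invoking the right version of Jacobi's complementary-minor identity so that everything telescopes to the stated closed form. A probabilistic derivation via excursion decomposition (splitting an $i$-excursion according to whether and when it hits $j$, and identifying $\big([Q^{(j,j)}]^{-1}\big)_{i^{(j)},i^{(j)}}$ as the expected number of visits to $i$ before hitting $j$, starting from $i$) may in fact be the least error-prone path and is the one I would ultimately write up, cross-checking against the algebraic version.
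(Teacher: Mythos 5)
Your treatment of \eqref{eq barCjj linked to mu c} is exactly the paper's: it writes $\bar C_{j,j}=\sum_\ell \bar c_\ell\,\gamma^j_\ell$ with $\gamma^j_\ell$ the expected occupation time of $\ell$ during a $j$-excursion and quotes $\gamma^j_\ell=\mu_\ell/\mu_j$ (Norris, Thm 1.7.5); your alternative route ($QC_{\cdot,j}=\bar c-\bar C_{jj}e_j$, then contract with $\mu$) is also correct and self-contained.

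For \eqref{eq Cij+Cji linked to mu c} the comparison is more nuanced. The paper's proof is purely determinantal: writing $C_{i,j}$ via the adjugate $\mathfrak{Q}^j=\mathrm{ad}[Q^{(j,j)}]$, it reduces everything to the three-index identity \eqref{eq kind of magic}, $\mu_i\,\mathfrak{Q}^j_{i^{(j)},k^{(j)}}+\mu_j\,\mathfrak{Q}^i_{j^{(i)},k^{(i)}}=\mathfrak{Q}^j_{i^{(j)},i^{(j)}}\mu_k$, which is proved by substituting the row relation $Q_{k,\cdot}=-\sum_{\ell\neq k}\tfrac{\mu_\ell}{\mu_k}Q_{\ell,\cdot}$ (from $\mu Q=0$) into the relevant minors and tracking permutation signs. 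The specific identity you name for your algebraic route --- Jacobi/Dodgson relating $\det Q^{(i,i)}$, $\det Q^{(j,j)}$, $\det Q^{(\{i,j\},\{i,j\})}$ and $\det Q^{(i,j)}\det Q^{(j,i)}$ --- is degenerate here: since $\det Q=0$ and \eqref{eq inv meas} gives $\det Q^{(i,j)}\det Q^{(j,i)}=\tilde\mu_i\tilde\mu_j=\det Q^{(i,i)}\det Q^{(j,j)}$, that identity reduces to $0=0$ and cannot produce \eqref{eq kind of magic}, whose content is genuinely the weighting of \emph{off-diagonal} adjugate entries against the kernel vector $\mu$. So as written, your primary algebraic path has a real gap. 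However, the probabilistic route you sketch as a fallback does close, and is a genuinely different (and cleaner) proof: by the strong Markov property $C_{i,j}+C_{j,i}$ is the expected cost of the round trip $i\to j\to i$; decomposing that path into i.i.d.\ $i$-excursions and applying Wald's identity gives $C_{i,j}+C_{j,i}=\E[N]\,\bar C_{i,i}$ where $N$ is the (geometric) number of excursions until one visits $j$; and $\E[N]=1/\P_i(\tau_j<\tau_i^+)$ equals the expected number of visits to $i$ before hitting $j$, which is precisely the Green's-function entry $\bigl([Q^{(j,j)}]^{-1}\bigr)_{i^{(j)},i^{(j)}}=\bigl(\sum_{k\ge0}(P^{(j,j)})^k\bigr)_{i^{(j)},i^{(j)}}$. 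Combined with \eqref{eq barCjj linked to mu c} this yields \eqref{eq Cij+Cji linked to mu c} with no determinant bookkeeping at all. If you write up that version with the Wald/geometric details made explicit, you have a complete proof; if you insist on the algebraic version, you must prove \eqref{eq kind of magic} itself (via $\mu Q=0$ acting on rows), not a complementary-minor identity.
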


\vspace{2mm}

\noindent We are now going to show that previous necessary conditions are also sufficient. The main result of this section is thus the following.
\begin{Theorem} \label{th set of cns} The following conditions are equivalent:
  \begin{enumerate}[i)]
  \item The domain $\cD$ is non-empty (resp. has non-empty interior).
  \item There exists $1 \le i \neq j \le d$ such that $C_{i,j} + C_{j,i} \ge 0$ (resp. $C_{i,j} + C_{j,i} > 0$).
  \item The inequality $\mu \bar{c} \ge 0$ (resp. $\mu \bar{c} > 0$) is satisfied.
  \item For all $1 \le i \neq j \le d$, $C_{i,j} + C_{j,i} \ge 0$ (resp. $C_{i,j} + C_{j,i} > 0$).
  \end{enumerate}
\end{Theorem}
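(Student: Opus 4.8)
The plan is to prove Theorem \ref{th set of cns} by establishing a cycle of implications among i)--iv), exploiting the bridges already in hand from Proposition \ref{pr necessary conditions} and Lemma \ref{le costly round-trip}. Notice first that the implications i) $\Rightarrow$ iv), i) $\Rightarrow$ iii) (in both the non-empty and non-empty-interior versions) are already contained in Proposition \ref{pr necessary conditions}, and iv) $\Rightarrow$ ii) is trivial (take any single pair $i\neq j$). Also iii) $\Leftrightarrow$ iv) is essentially immediate from Lemma \ref{le costly round-trip}: equation \eqref{eq Cij+Cji linked to mu c} shows $C_{i,j}+C_{j,i} = \frac{\mu\bar c}{\mu_i}\big([Q^{(j,j)}]^{-1}\big)_{i^{(j)},i^{(j)}}$, and since $(Q^{(j,j)})^{-1} = \sum_{k\ge 0}(P^{(j,j)})^k \succcurlyeq 0$ has strictly positive diagonal entries (each such entry is at least $1$, from the $k=0$ term) and $\mu_i > 0$, the sign of $C_{i,j}+C_{j,i}$ matches the sign of $\mu\bar c$ for every pair $i\neq j$. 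In particular ii) $\Rightarrow$ iii) as well, because the existence of one pair with $C_{i,j}+C_{j,i}\ge 0$ already forces $\mu\bar c \ge 0$ (strict if strict). So the only genuinely new content is the reverse direction: iii) (or equivalently iv)) $\Rightarrow$ i), i.e. constructing a point of $\cD$ (resp. of the interior) from the condition $\mu\bar c \ge 0$ (resp. $> 0$).

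For that implication I would argue by explicit construction. Fix a reference state, say $j=d$, and define a candidate $x\in\R^d$ by $x_d := 0$ and $x_i := -C_{i,d}$ for $i\neq d$; equivalently $x^{(d)} = -C^{(d)}_{\cdot,d} = -(Q^{(d,d)})^{-1}\bar c^{(d)}$ using \eqref{eq carac C}. By construction the inequalities indexed by $i\neq d$ in the definition \eqref{domain} of $\cD$ are saturated (this is exactly \eqref{eq dpp C}: $Q^{(d,d)}x^{(d)} = -\bar c^{(d)}$, i.e. $x_i = \sum_{\ell}P_{i,\ell}x_\ell - \bar c_i$ for $i\neq d$). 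It remains to check the single inequality indexed by $i=d$, namely $x_d \ge \sum_\ell P_{d,\ell}x_\ell - \bar c_d$, i.e. $0 \ge (Qx + \bar c)_d$ where all other coordinates of $Qx+\bar c$ vanish. Multiplying $Qx+\bar c$ by the invariant measure $\mu$ gives $\mu(Qx+\bar c) = \mu\bar c$ since $\mu Q = 0$; but $\mu(Qx+\bar c) = \mu_d (Qx+\bar c)_d$ because all other components are zero, so $(Qx+\bar c)_d = \mu\bar c/\mu_d$. Hence the last inequality reads $-\mu\bar c/\mu_d \ge 0$... wait — this has the wrong sign, so in fact this particular $x$ lies in $\cD$ iff $\mu\bar c \le 0$. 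The fix is to run the same construction from the condition as stated: since we want $\cD\neq\emptyset$ under $\mu\bar c\ge 0$, I would instead take $x_i := C_{d,i}$ for $i\neq d$ and $x_d = 0$ (the ``round trip from $d$'' bookkeeping), check via \eqref{eq dpp C} applied to column index $i$ that the inequalities for $i\neq d$ hold (now as inequalities, not equalities, using $C_{d,\ell}\ge C_{d,i}-\bar c$-type telescoping, or more cleanly use $C_{\ell,i}+C_{i,\ell}$ bounds), and verify the $i=d$ constraint again collapses, via $\mu$, to $\mu\bar c\ge 0$. Let me restate this more carefully below.

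More robustly: recall from \eqref{eq encadrement} that any $x\in\cD$ satisfies $-C_{i,j}\le x_i-x_j\le C_{j,i}$ for all $i,j$; conversely, I claim the point $x^\star$ with $x^\star_d=0$, $x^\star_i = -C_{i,d}$ for $i\neq d$, lies in $\cD$ precisely when $C_{i,d}+C_{d,i}\ge 0$ for all $i$ \emph{and} $\mu\bar c\ge 0$ — and under iii) $\Leftrightarrow$ iv) both hold simultaneously. The verification splits into: (a) inequalities for $i\neq d$, which hold with equality by \eqref{eq dpp C}; (b) the inequality for $i=d$, which by the $\mu$-contraction argument above is equivalent to $\mu\bar c$ having the correct sign. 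The strict case is identical with $\ge$ replaced by $>$ everywhere, after perturbing $x^\star$ slightly in the direction $-e_d$ (as in the proof of Proposition \ref{pr necessary conditions}, part 2) to open up all $d$ inequalities strictly; this is where one uses $\min_i(1-P_{i,i})>0$, i.e. irreducibility. The main obstacle — and the one place to be careful — is getting the signs and the choice of saturated face exactly right so that the $\mu$-projection of the residual $Qx+\bar c$ is a single nonzero coordinate equal (up to the positive factor $1/\mu_d$) to $\mu\bar c$; once that bookkeeping is pinned down, the theorem is just the assembly of the implications ii) $\Rightarrow$ iii) $\Leftrightarrow$ iv) $\Rightarrow$ ... $\Rightarrow$ i) $\Rightarrow$ (iii and iv) already proved, closing the loop, and iv) $\Rightarrow$ ii) trivially.
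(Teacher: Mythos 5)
Your overall skeleton is the same as the paper's: i) $\Rightarrow$ iii), iv) come from Proposition \ref{pr necessary conditions}, iv) $\Rightarrow$ ii) is trivial, ii) $\Rightarrow$ iii) follows from \eqref{eq Cij+Cji linked to mu c} together with $(Q^{(j,j)})^{-1}=\sum_k (P^{(j,j)})^k\succcurlyeq 0$, and the real work is iii) $\Rightarrow$ i). For the non-strict case your \emph{first} construction is already the right one and is exactly the paper's: $x=z^d:=-C_{\cdot,d}$ saturates the constraints $i\neq d$ by \eqref{eq dpp C}, and your $\mu$-contraction argument correctly gives $(Qx+\bar c)_d=\mu\bar c/\mu_d$, so the remaining constraint $(Qx+\bar c)_d\ge 0$ is precisely $\mu\bar c\ge 0$ --- there is no minus sign. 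Your ``wait, wrong sign'' and the attempted repair $x_i:=C_{d,i}$ are a false alarm (and that alternative point does not saturate anything useful); you eventually return to the correct $x^\star=-C_{\cdot,d}$ but never cleanly resolve the sign, so tighten that. As a side remark, deriving the slack $(Qz^j+\bar c)_j=\mu\bar c/\mu_j$ by hitting $Qz^j+\bar c$ with $\mu$ is a nice shortcut: the paper instead computes $(Qz^j+\bar c)_j=\bar C_{j,j}$ via \eqref{eq dpp bar C} and then invokes \eqref{eq barCjj linked to mu c}.

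The genuine gap is in the non-empty-interior case. Perturbing $x^\star$ by $-\epsilon e_d$ does not open all constraints strictly: for $i\neq d$ the slack becomes $(Q(x^\star-\epsilon e_d)+\bar c)_i=\epsilon P_{i,d}$, which vanishes whenever $P_{i,d}=0$, and irreducibility does not force $P_{i,d}>0$ for every $i$ (it only makes $d$ accessible in several steps). Meanwhile the $d$-th slack \emph{decreases} to $\mu\bar c/\mu_d-\epsilon(1-P_{d,d})$. So in general $x^\star-\epsilon e_d$ stays on the boundary of $\cD$. (The quantity $\min_i(1-P_{i,i})>0$ that you invoke is used in the paper for the \emph{converse} implication, non-empty interior $\Rightarrow\mu\bar c>0$, not here.) The paper's fix is to use all $d$ points $z^1,\dots,z^d$: each $z^k$ saturates every constraint except the $k$-th, where its slack is $\bar C_{k,k}=\mu\bar c/\mu_k>0$, so a strict convex combination such as $x=\tfrac12 z^j+\tfrac{1}{2(d-1)}\sum_{k\neq j}z^k$ has $(Qx+\bar c)_i\ge \tfrac{1}{2(d-1)}\bar C_{i,i}>0$ for every $i$, and hence a whole ball around $x$ lies in $\cD$. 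You need this (or some equivalent averaging/strictly-positive-combination argument) to close iii) $\Rightarrow$ i) in the strict case.
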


\proof 1 We first note that in Proposition \ref{pr necessary conditions} we have proved 
$i)\implies iv)$. We also remark that $iv) \implies ii)$ trivially, and $ii) \implies iii)$ in a straightforward way from equality \eqref{eq Cij+Cji linked to mu c}, recalling that $\left(Q^{(j,j)}\right)^{-1} = \sum_{k \ge 0} \left(P^{(j,j)}\right)^k \succcurlyeq 0$.
\\
2. We now study $iii) \implies i)$.
\\
2.a Assume that $\mu \bar{c} \ge 0$. For $1 \le j \le d$, we denote $z^j := -C_{\cdot,j}$. Then from \eqref{eq dpp C}, we straightforwardly observe that, for all $i \neq j$,
\begin{align}\label{eq calcul zji}
z^j_i = \sum_{\ell = 1}^d z^j_\ell P_{i,\ell} - \bar{c}_i \;. 
\end{align}
We now take care of the case $i=j$ by computing, recall $z^j_j=0$,
\begin{align}\label{eq calcul zjj}
z^j_j - \sum_{\ell = 1}^d z^j_\ell P_{j,\ell} + \bar{c}_j
&= \sum_{\ell = 1}^d C_{\ell,j} P_{j,\ell} + \bar{c}_j 
=\bar{C}_{j,j}
\end{align}
where we used \eqref{eq dpp bar C} with $i=j$. Then, combining  \eqref{eq barCjj linked to mu c} and the assumption $\mu \bar{c} \ge 0$ for this step, we obtain that $z^j \in \cD$ and so $\cD$ is non empty.
\\
2.b We assume that $\mu \bar{c}>0$ which implies that $\bar{C}_{jj} = \frac{\mu \bar{c}}{\mu_j} > 0$ for all $1 \leq j \leq d$ recalling \eqref{eq barCjj linked to mu c}. Set any $j \in \set{1,\dots,d}$ and consider $z^j := -C_{\cdot,j}$ introduced in the previous step.
We then set 
\begin{align}\label{eq de x}
x := z^j + \frac{1}{2(d-1)}\sum_{k \neq j}(z^k-z^j).
\end{align}
Next, we compute, for $i \neq j$, recalling \eqref{eq calcul zji} and \eqref{eq calcul zjj},
\begin{align*}
\left(Qx + \bar{c} \right)_i &= \left(Qz^j + \bar{c} \right)_i + \frac{1}{2(d-1)}\sum_{k \neq j}(Qz^k-Qz^j)_i\\
&= 0+\frac{1}{2d} (Qz^i-Qz^j)_i = \frac{1}{2(d-1)}((Qz^i)_i+\bar{c}_i) = \bar{C}_{i,i}>0.
\end{align*}
For $i=j$, we compute,  recalling \eqref{eq calcul zji} and \eqref{eq calcul zjj},
\begin{align*}
\left(Qx + \bar{c} \right)_j&= \left(Qz^j + \bar{c} \right)_j +  \frac{1}{2(d-1)}\sum_{k \neq j}(Qz^k-Qz^j)_j\\
&= \bar{C}_{j,j} +  \frac{1}{2(d-1)}\sum_{k \neq j} (-\bar{c}_j + \bar{c}_j -\bar{C}_{j,j}) = \frac{\bar{C}_{j,j}}{2}>0.
\end{align*}
Combining the two previous inequalities, we  obtain that
\begin{align*}
Qx + \bar{c} \succcurlyeq \frac{\delta}{2}  \1\, \quad \text{with} \quad \delta = \min\{\bar{C}_{i,i}|1 \leq i \leq d\}.
\end{align*}
From this, we easily deduce that $x + B(0, \frac{\delta}{4 \sup_i ||Q_{i,\cdot}||_2} ) \subset \cD$, which proves that $\cD$ has a non-empty interior.
\\
\eproof


\noindent We now give some extra conditions that are linked to the non-emptiness of the domain $\cD$ 

\begin{Proposition} The following assertions are equivalent:
\begin{enumerate}[i)]
\item $\cD$ is non-empty,
\item For all $1 \le i,j,k \le d$, the following holds
\begin{align}\label{eq nice cost}
C_{jk} \le C_{ji} + C_{ik},
\end{align}
\item \textcolor{black}{For any round trip of length less than $d$, i.e. $1\le n \le d$, $1 \le i_1 \neq...\neq i_n \le d$, we have
\begin{align}\label{eq any roundtrip}
{\sum_{k=1}^{n-1} C_{i_k,i_{k+1}} +  C_{i_n, i_1} \ge 0 }.
\end{align}}
\end{enumerate}
\end{Proposition}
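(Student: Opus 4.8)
The plan is to prove the cycle $i)\Rightarrow ii)\Rightarrow iii)\Rightarrow i)$, relying on Theorem~\ref{th set of cns} (which already gives $\cD\neq\emptyset\iff\mu\bar c\ge 0\iff C_{i,j}+C_{j,i}\ge 0$ for all $i\neq j$), on the a priori bounds~\eqref{eq encadrement}, and on the explicit feasible points $z^{k}:=-C_{\cdot,k}$ introduced in the proof of that theorem, which satisfy $(Qz^{k}+\bar c)_{i}=0$ for $i\neq k$ and $(Qz^{k}+\bar c)_{k}=\bar C_{k,k}$.

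For $i)\Rightarrow ii)$: assume $\cD\neq\emptyset$, so $\mu\bar c\ge 0$ by Theorem~\ref{th set of cns} and hence $\bar C_{k,k}=\mu\bar c/\mu_{k}\ge 0$ by~\eqref{eq barCjj linked to mu c}; thus $z^{k}\in\cD$ for every $k$. Fix indices $i,j,k$. If two of them coincide, the inequality $C_{j,k}\le C_{j,i}+C_{i,k}$ is trivial from $C_{\ell,\ell}=0$ (the case $j=k$ reducing to $0\le C_{j,i}+C_{i,j}$, which holds by Theorem~\ref{th set of cns}). Otherwise, apply the lower bound in~\eqref{eq encadrement} to $x=z^{k}\in\cD$ with the index pair $(j,i)$: since $(z^{k})_{j}-(z^{k})_{i}=C_{i,k}-C_{j,k}$, it reads $-C_{j,i}\le C_{i,k}-C_{j,k}$, i.e. exactly $C_{j,k}\le C_{j,i}+C_{i,k}$. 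This is the one substantive step, and its content is the observation that the crude estimate~\eqref{eq encadrement}, evaluated not at a generic point of $\cD$ but precisely at the ``extremal'' feasible vectors $z^{k}$, collapses to the triangle inequality; this is also exactly where non-emptiness of $\cD$ is used, namely to guarantee $z^{k}\in\cD$.

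For $ii)\Rightarrow iii)$: given a round trip $i_{1}\neq\cdots\neq i_{n}$ with $2\le n\le d$ (the case $n=1$ being trivial, the empty sum plus $C_{i_{1},i_{1}}=0$), iterate the triangle inequality to telescope $\sum_{k=1}^{n-1}C_{i_{k},i_{k+1}}\ge C_{i_{1},i_{n}}$, then add $C_{i_{n},i_{1}}$ and use $ii)$ once more (taking the intermediate vertex to be $i_{n}$) to obtain $\sum_{k=1}^{n-1}C_{i_{k},i_{k+1}}+C_{i_{n},i_{1}}\ge C_{i_{1},i_{n}}+C_{i_{n},i_{1}}\ge C_{i_{1},i_{1}}=0$; since the $i_{\ell}$ are distinct and $n\le d$, no issue with repeated vertices arises. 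For $iii)\Rightarrow i)$: specialise $iii)$ to round trips of length $n=2$ to get $C_{i,j}+C_{j,i}\ge 0$ for all $i\neq j$, which is condition $iv)$ of Theorem~\ref{th set of cns} and hence yields $\cD\neq\emptyset$. Thus the only place requiring an idea is the first implication; the other two are routine bookkeeping, and the same arguments with $\ge$ replaced by $>$ would give the analogous equivalences for ``$\cD$ has non-empty interior''.
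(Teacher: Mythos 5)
Your proof is correct and follows essentially the same route as the paper: for $i)\Rightarrow ii)$ both arguments evaluate the bound \eqref{eq encadrement} at the feasible points $-C_{\cdot,k}\in\cD$ (whose membership in $\cD$ comes from $\mu\bar c\ge0$ via Theorem \ref{th set of cns}), then obtain $iii)$ by telescoping with $C_{i,i}=0$, and close the cycle by reducing $iii)$ to the two-state round trips already handled in Theorem \ref{th set of cns}. The only difference is that you spell out the degenerate index cases and the telescoping explicitly, which the paper leaves implicit.
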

\proof
1. $i) \implies ii)$. From Theorem \ref{th set of cns}, we know that $-C_{\cdot,k} \in \cD$ for all $k \in \set{1,\dots,d}$. Using then  \eqref{eq encadrement}, we have
\begin{align*}
-C_{i,k} + C_{j,k} \le C_{j,i}\;, \text{ for all}  1 \le i, j \le d\;, 
\end{align*}
which concludes the proof for this step.
\\
2. $ii) \implies iii)$ directly since $C_{i,i}=0$ for all $1 \le i \le d$. Finally $iii) \implies i)$ is already proved in Theorem \ref{th set of cns} for $2$-state round trip.
\eproof



\begin{Proposition} \label{pr convex hull}
Let us assume that $\cD$ has a non empty interior. Define  $\theta_{\cdot,j} = C_{\cdot,j} - C_{d,j} \1$, for all $1 \le j \le d$.
Then $(-\theta_{\cdot,j})_{1 \le j \le d}$ are affinely independent and  $\Dslice$ is the convex hull of these points.
\end{Proposition}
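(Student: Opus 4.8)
The plan is to realise the points $-\theta_{\cdot,j}$ as exactly the vertices of the polytope $\Dslice$, and then to read off affine independence from the full-dimensionality of $\Dslice$. Throughout, Assumption \ref{ass-uncontrolled-irred} is in force; the non-empty interior hypothesis gives $\mu\bar{c}>0$ by Theorem \ref{th set of cns}; and $\Dslice$ is a compact convex polytope, by Proposition \ref{pr necessary conditions}(3), cut out inside $\set{y\in\R^d : y_d=0}$ by the $d$ inequalities $(Qx+\bar{c})_i\ge 0$, $1\le i\le d$. I would first check that $-\theta_{\cdot,j}\in\Dslice$ for each $j$: by the proof of Theorem \ref{th set of cns} (step 2.a) the vector $z^j:=-C_{\cdot,j}$ belongs to $\cD$, and $z^j_j=0$ since $C_{j,j}=0$; by Lemma \ref{lem-general} the domain $\cD$ is invariant under translation along $\1$, so $-\theta_{\cdot,j}=z^j+C_{d,j}\1\in\cD$, and its last coordinate is $z^j_d+C_{d,j}=0$, hence $-\theta_{\cdot,j}\in\Dslice$.

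The key step is to identify, for each fixed $j$, the point $-\theta_{\cdot,j}$ as the \emph{unique} element of $\set{y\in\R^d : y_d=0}$ saturating the $d-1$ constraints with index $i\ne j$. Indeed, \eqref{eq calcul zji} gives $(Qz^j+\bar{c})_i=0$ for $i\ne j$, while \eqref{eq calcul zjj} together with \eqref{eq barCjj linked to mu c} gives $(Qz^j+\bar{c})_j=\bar{C}_{j,j}=\mu\bar{c}/\mu_j>0$; since $Q\1=0$ the same holds with $-\theta_{\cdot,j}$ in place of $z^j$. For uniqueness, if $x\in\set{y_d=0}$ also satisfies $(Qx+\bar{c})_i=0$ for all $i\ne j$, then $x-x_j\1$ lies in $\set{y_j=0}$ and, using invertibility of $Q^{(j,j)}$ and \eqref{eq carac C}, must equal $z^j$; imposing $x_d=0$ then forces $x_j=-z^j_d=C_{d,j}$, i.e.\ $x=-\theta_{\cdot,j}$. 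In particular the $d-1$ linear forms $x\mapsto(Qx)_i$, $i\ne j$, are independent on $\set{y_d=0}$.

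From here the argument closes as follows. Every vertex of $\Dslice$ saturates at least $d-1$ of the $d$ defining inequalities; it cannot saturate all $d$, since $Qx=-\bar{c}$ and $\mu Q=0$ would force $\mu\bar{c}=0$, contradicting $\mu\bar{c}>0$. Hence each vertex saturates exactly the constraints $i\ne j$ for a single $j$, and therefore coincides with $-\theta_{\cdot,j}$ by the previous step. Since the set of vertices is thus contained in $\set{-\theta_{\cdot,j} : 1\le j\le d}\subset\Dslice$, and a compact convex polytope is the convex hull of its vertices, we get $\Dslice=\mathrm{conv}\set{-\theta_{\cdot,j} : 1\le j\le d}$. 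Finally, $\cD$ having non-empty interior and being $\1$-invariant (Lemma \ref{lem-general}) forces $\Dslice$ to have non-empty relative interior in $\set{y_d=0}$, so $\dim\Dslice=d-1$; the convex hull of the $d$ points $-\theta_{\cdot,j}$ therefore has dimension $d-1$, which is possible only if these $d$ points are affinely independent.

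The main obstacle is the vertex-identification step: pinning down that every vertex of $\Dslice$ omits exactly one constraint, and recovering the precise point $-\theta_{\cdot,j}$. This rests on the invertibility of $Q^{(j,j)}$ combined with the translation-along-$\1$ device to pass between the slices $\set{y_j=0}$ and $\set{y_d=0}$, and on using $\mu\bar{c}>0$ to exclude a vertex at which all $d$ constraints are simultaneously active.
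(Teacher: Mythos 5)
Your proof is correct, but it follows a genuinely different route from the paper's. The paper first proves affine independence by a direct computation: taking $\alpha$ with $\sum_j\alpha_j=0$ and $z:=\sum_j\alpha_j\theta_{\cdot,j}=0$, it uses the relations \eqref{eq property theta} to derive $z_i=-\alpha_i\bar{C}_{i,i}$, whence $\alpha_i=0$ since $\bar{C}_{i,i}=\mu\bar{c}/\mu_i>0$; it then writes an arbitrary $y\in\Dslice$ in the resulting (unique) barycentric coordinates and shows the weights are nonnegative by evaluating $Qy+\bar{c}\succcurlyeq 0$ coordinatewise, using that $[Q(-\theta_{\cdot,j})+\bar{c}]_i$ vanishes for $i\ne j$ and is positive for $i=j$. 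You instead argue polyhedrally: you identify $-\theta_{\cdot,j}$ as the unique point of $\{y_d=0\}$ saturating the $d-1$ constraints of index $i\ne j$ (via invertibility of $Q^{(j,j)}$ and the translation trick), show every vertex of $\Dslice$ must activate at least $d-1$ constraints but not all $d$ (the latter excluded by $\mu\bar{c}>0$), invoke that a compact polytope is the convex hull of its vertices, and finally deduce affine independence from a dimension count ($\Dslice$ has nonempty relative interior because the projection along $\1$ is open). Both arguments are sound; yours trades the paper's self-contained linear-algebra computation for standard facts from polytope theory (the active-constraint characterisation of extreme points and the Minkowski representation), and as a by-product it shows the $-\theta_{\cdot,j}$ are exactly the vertices of $\Dslice$, which the paper only obtains implicitly. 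Note also that the order of deductions is reversed: the paper needs affine independence \emph{before} it can set up barycentric coordinates, whereas you obtain it \emph{last}, as a consequence of the hull identity; both orders are logically consistent.
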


\proof We know from  Step 2.a in the proof of Theorem \ref{th set of cns} that $-C_{\cdot,j} \in \cD$ for all $1 \le j \le d$. The invariance by translation along $\1$ of the domain proves that $-\theta_{\cdot,j}$ are in $\Dslice$. More precisely, we obtain from \eqref{eq dpp C} that,
\begin{align}\label{eq property theta}
\theta_{i,j} - \sum_{\ell=1}^d \theta_{\ell,j}P_{i,\ell} = \bar{c}_i \;,\quad \text{ for } 1 \le i \neq j \le d\;.
\end{align}
\\
1. We now prove that $(\theta_{\cdot,j})_{1 \le j \le d}$ are affinely independent.
We consider thus $\alpha \in \R^d$ such that 
\begin{align}\label{eq ass on alpha}
\sum_{j=1}^d \alpha_j = 0 \quad 
\text{ and } \quad z := \sum_{j = 1}^d \alpha_j \theta_{\cdot,j} = 0\;.
\end{align}
and we aim to prove that $\alpha_j = 0$, for $j \in \set{1,\dots,d}$.
To this end, we compute, for $i \in \set{1,\dots,d}$,
\begin{align*}
z_i := \sum_{j=1}^d \alpha_j \theta_{i,j} &= \sum_{j \neq i} \alpha_j \theta_{i,j}  + \alpha_i \theta_{ii}
= \sum_{j\neq i} \alpha_j \bar{c}_i + \sum_{\ell = 1}^d \sum_{j \neq i}\alpha_j\theta_{\ell,j}P_{i,\ell}
-\alpha_i C_{d,i}
\\
&=\sum_{j\neq i} \alpha_j \bar{c}_j + \sum_{\ell = 1}^d z_\ell P_{i,\ell} - \alpha_i\sum_{\ell=1}^d \theta_{\ell,i} P_{i,\ell}
-\alpha_i C_{d,i}
\\
&= - \alpha_{i}(\bar{c}_i + \sum_{\ell=1}^d \theta_{\ell,i} P_{i,\ell} + C_{d,i})
= - \alpha_i(\bar{c}_i + \sum_{\ell=1}^d C_{\ell,i} P_{i,\ell})=-\alpha_i \bar{C}_{i,i}.
\end{align*}
We thus deduce that $\alpha_i=0$ since $\bar{C}_{i,i}=\frac{\mu \bar{c}}{\mu_i}>0$, which concludes the proof for this step.
\\
2.  We now show that $\Dslice$ is the convex hull of points $(-\theta_{\cdot,j})_{1 \le j \le d}$, which are affinely independent from the previous step. For $y \in \R^{d} \cap \{y_d = 0\}$, there exists thus a unique $(\lambda_1,\dots,\lambda_{d-1}) \in \R^{d-1}$ such that $y = \sum_{j=1}^d -\lambda_j \theta_{\cdot,j}$, with $\lambda_d = 1 - \sum_{j=1}^{d-1} \lambda_j$. Assuming that $y \in \cD$, we have that 
\begin{align*}
v := Qy + \bar{c} =  \sum_{j=1}^d -\lambda_j Q \theta_{\cdot,j}  + \bar{c} =\sum_{j=1}^d \lambda_j [Q(-\theta_{.,j})+\bar{c}]\succcurlyeq 0\;.
\end{align*}
Since $[Q(-\theta_{.,j})+\bar{c}]_i=0$ for all $i \neq j$, we get, for all $1 \le i \le d$,
$$v_i = \lambda_i ([Q(-\theta_{.,i})]_i+\bar{c}_i) \ge 0.$$
Recalling that $[Q(-\theta_{.,i})]_i+\bar{c}_i \ge 0$ we obtain $\lambda_i \ge 0$ which concludes the proof. \eproof

\subsubsection{The setting of controlled randomisation}
In this part we adapt Assumption \ref{ass-uncontrolled-irred} in the following natural way.
\begin{Assumption} \label{ass-uncontrolled-irred- new}
  For all $u \in \cC$, the Markov chain with stochastic matrix $P^u := (P^u_{i,j})_{1 \le i,j \le d}$ is irreducible. 
\end{Assumption}

\noindent We then consider the matrix $\widehat{C}$ defined by, for all $(i,j) \in \set{1,\dots,d}$
\begin{align}\label{eq de min cost}
 \widehat{C}_{i,j} := \min_{u \in \cC} {C}^u_{i,j} \, 
 \,  
\end{align}
recall the Definition of $\bar{C}^u_{i,j}$ for a fixed control in \eqref{eq de expected cost}. Let us note that $ \widehat{C}_{i,j}$ is well defined in $\R$ under Assumption \ref{ass-uncontrolled-irred- new} since $\cC$ is compact.

\noindent The following result is similar as Proposition \ref{pr necessary conditions}  but in the context of switching with controlled randomisation.\\

\begin{Proposition}\label{pr necessary conditions general} Assume $\cD$ is non-empty (resp. has non-empty interior). Then,
\begin{align}\label{eq cond cost non empty interior general}
\min_{u}\mu^u \bar{c}^u \ge 0\;(\text{resp.} > 0) 
\quad\text{ and } 
\quad\min_{1\le i\neq j \le d} \left(\widehat{C}_{i,j} + \widehat{C}_{j,i} \right) \ge 0\;(\text{resp.} > 0)\;.
\end{align}
Moreover, the set $\Dslice$ is compact in $\{y \in \R^d : y_d = 0\}$.
\end{Proposition}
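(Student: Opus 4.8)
The plan is to reduce Proposition \ref{pr necessary conditions general} to the uncontrolled results of Proposition \ref{pr necessary conditions} by a slicing argument. First I would observe that for any \emph{fixed} control $u \in \cC$, the domain $\cD$ is contained in the uncontrolled domain $\cD^u := \{x \in \R^d : Q^u x + \bar{c}^u \succcurlyeq 0\}$, simply because the defining constraint of $\cD$ (a supremum over $\cC$) is more restrictive than the constraint obtained by keeping only the single index $u$. Under Assumption \ref{ass-uncontrolled-irred- new}, Proposition \ref{pr necessary conditions} applies to $\cD^u$: if $\cD$ is non-empty then $\cD^u$ is non-empty, hence $\mu^u \bar{c}^u \ge 0$ and $\min_{1\le i\neq j\le d}(C^u_{i,j}+C^u_{j,i}) \ge 0$ for that $u$. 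Since $u\in\cC$ was arbitrary, $\min_u \mu^u\bar{c}^u \ge 0$ follows at once.

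For the cost condition I have to be a little more careful, because $\widehat{C}_{i,j}+\widehat{C}_{j,i} = \min_u C^u_{i,j} + \min_v C^v_{j,i}$ may in principle mix two different controls. I would argue directly from the analogue of the key estimate \eqref{eq encadrement}: fix $x\in\cD$. For each control $u$ separately, $x\in\cD^u$, so the proof of \eqref{eq encadrement} gives $x_i - x_j \ge -C^u_{i,j}$ for all $u$, whence $x_i - x_j \ge -\widehat{C}_{i,j}$, and symmetrically $x_j - x_i \ge -\widehat{C}_{j,i}$. Adding these two inequalities yields $\widehat{C}_{i,j}+\widehat{C}_{j,i} \ge 0$. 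The same two inequalities, read as $-\widehat{C}_{i,j} \le x_i - x_j \le \widehat{C}_{j,i}$ for all $i \ne j$ and with the normalisation $x_d = 0$ on $\Dslice$, bound every coordinate of a point of $\Dslice$, so $\Dslice$ is bounded; it is clearly closed as an intersection of closed half-spaces, hence compact in $\{y \in \R^d : y_d = 0\}$.

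For the non-empty interior case I would run the same scheme with strict inequalities: if $\cD$ has non-empty interior, pick $x$ in the interior, so that $x - \epsilon e_i \in \cD \subset \cD^u$ for small $\epsilon>0$ and every $u$. Feeding $x - \epsilon e_i$ into the computations of part 2 of the proof of Proposition \ref{pr necessary conditions} (for each fixed $u$, using that $P^u$ is irreducible so that $\min_i(1-P^u_{i,i})>0$) gives $\mu^u\bar{c}^u > 0$ and, via \eqref{eq encadrement} applied with the perturbed point, $-C^u_{i,j}+\epsilon \le x_i - x_j \le C^u_{j,i}$ for all $u$, hence $-\widehat{C}_{i,j}+\epsilon \le \widehat{C}_{j,i}$, i.e. $\widehat{C}_{i,j}+\widehat{C}_{j,i} > 0$. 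Since this holds for every $u$, taking the infimum over $\cC$ (legitimate because $\cC$ is compact and, by the standing continuity assumptions, $u\mapsto \mu^u\bar{c}^u$ is continuous, so the infimum is attained and still strictly positive) delivers $\min_u \mu^u\bar{c}^u > 0$.

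The main obstacle I anticipate is purely bookkeeping rather than conceptual: one must be sure that the intermediate quantities $C^u_{i,j}$ are finite and depend measurably/continuously on $u$, so that $\widehat{C}_{i,j}$ is a genuine (attained) minimum over the compact set $\cC$ — this uses Assumption \ref{ass-uncontrolled-irred- new} (each $P^u$ irreducible recurrent, so excursion costs are finite, exactly as noted after \eqref{eq de C}) together with the continuity of $u\mapsto P^u_{i,j}$ and $u\mapsto\bar{c}^u_i$ assumed in Section \ref{game}. Once that is in place, every step above is a verbatim transcription of the uncontrolled arguments applied control-by-control, and the control-dependent conclusions are then combined by the elementary inequality $\min_u(a_u+b_u)\le \min_u a_u + \min_v b_v$ used in the direction we need.
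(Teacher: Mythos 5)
Your proposal is correct and follows essentially the same route as the paper: apply the uncontrolled estimate \eqref{eq encadrement} (and the inequality $Q^u x + \bar{c}^u \succcurlyeq 0$) to a single point $x\in\cD$ for each fixed $u$, then minimise over $u$, with the non-empty-interior case obtained by perturbing an interior point by $-\epsilon e_i$ with $\epsilon$ uniform in $u$ exactly as in part 2 of the proof of Proposition \ref{pr necessary conditions}. The only blemish is the final parenthetical inequality $\min_u(a_u+b_u)\le \min_u a_u+\min_v b_v$, which is stated backwards, but your actual argument never uses it — combining the two controls through the common point $x$ is precisely what makes the mixed minimisers harmless.
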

\proof 1. Let $x \in \cD$.
From \eqref{eq encadrement}, we have for each $u \in \cC$,
$
-C^{u}_{i,j} \le x_i - x_j \le C^{u}_{j,i} \;.
$
Minimizing on $u \in \cC$, we then obtain
\begin{align}\label{eq encadrement general}
-\widehat{C}_{i,j}  \le x_i - x_j \le \widehat{C}_{j,i} \;.
\end{align}
From this, we deduce that $\Dslice$ is compact in $\{y \in \R^d : y_d = 0\}$ and we get the right handside of \eqref{eq cond cost non empty interior general}.
\\
We also have that, for all $u \in \cC$,
\begin{align*}
Q^u x + \bar{c}^u \succcurlyeq 0\,,
\end{align*}
then multiplying by $\mu^u$ we obtain $\mu^u \bar{c}^u \ge 0$. This leads to
$\min_{u}\mu^u \bar{c}^u \ge 0$.
\\
\textcolor{black}{2. Then, results concerning the non-empty interior framework can be obtained as in the proof of Proposition \ref{pr necessary conditions}.}

\paragraph{The case of controlled costs only.}
Let us first start by introducing the minimal  controlled mean cost:
\begin{align*}
\hat{c}_i := \min_{u \in \cC} \bar{c}^u_i\;,\quad \text{for}\; 1 \le i \le d\;.
\end{align*}
In this setting, we have that
\begin{align*}
  \cD 
   :=& \{x \in \R^d : (Qx)_i + \bar{c}^{u}_i \ge 0\;, \text{for all } u \in \cC, \; 1 \le i \le d\}
  \\
  = &\{x \in \R^d : (Qx)_i + \hat{c}_i \ge 0\;, \text{for all } \; 1 \le i \le d\}.
\end{align*}
Using the result of Proposition \ref{pr necessary conditions} with the new costs $\hat{c}$, we know that a necessary and sufficient condition is $\mu \hat{c} \ge 0$. Moreover, the matrix $C$ is defined here by
\begin{align}\label{eq carac C control cost}
C_{i,j} = \left((Q^{(j,j)})^{-1} \hat{c}^{(j)}\right)_{i-\1_{\set{i>j}} },\quad 1 \le i \neq j \le d \;,
\end{align}
and $C_{i,i} =0$, for $1 \le i \le d$. Comparing the above expression with the definition of $\widehat{C}$
in \eqref{eq de min cost}, we observe that $C_{i,j} \le \widehat{C}_{i,j}$, $1 \le i,j \le d$. The following example confirms that 
\begin{align*}
\min_{1\le i\neq j \le d} \left(\widehat{C}_{i,j} + \widehat{C}_{j,i} \right) \ge 0\,,
\end{align*}
recall Proposition \ref{pr necessary conditions general},  is not a sufficient condition in this context for non-emptiness of the domain.
\begin{Example}
Set $\cC = \set{0,1}$,
\begin{align*}
P = 
\left(
\begin{array}{ccc}
0 &  0.5  & 0.5 \\
0.5  & 0 & 0.5 \\
0.5 & 0.5 & 0 
\end{array}
\right)
\;,\;
\bar{c}^0 =\left(
\begin{array}{c}
-0.5  \\
1.2  \\
0.7 \\
\end{array}
\right)
\;
\text{ and }
\;
\bar{c}^1=\left(
\begin{array}{c}
1.5  \\
0.2   \\
0.2 \\
\end{array}
\right)
\end{align*}
Observe that $\mu = (\frac13,\frac13,\frac13)$ and $\hat{c} = (-0.5,0.2,0.2)^\top$.
Then, one computes that
\begin{align*}
\min_{1\le i\neq j \le d} \left(\widehat{C}_{i,j} + \widehat{C}_{j,i} \right) > 0
\quad \text{ but } \quad \mu \hat{c} < 0\;.
\end{align*}
\end{Example}


%
%
%
%

\subsection{The Markovian framework}
\label{sub se markov}

We now introduce a Markovian framework, and prove that a solution to \eqref{orbsde}-\eqref{orbsde2}-\eqref{orbsde3} exists for the randomised switching problem under Assumption \ref{ass-uncontrolled-irred} and a technical {copositivity} hypothesis, see Assumption \ref{ass-copositivity} below. \textcolor{black}{We also investigate an example of switching problem with controlled randomisation, see \eqref{transitions et couts exemple controle}.}\\
To this effect, we rely on the existence theorem obtained in \cite{chassagneux2018obliquely}, which we recall next.
\vspace{2mm}
\noindent For all $(t,x) \in [0,T] \times \R^q$, let $X^{t,x}$ be the solution to the following SDE:
\begin{align}
\label{eq SDE}
  \ud X_s &= b(s,X_s) \ud s + \sigma(s, X_s) \ud W_s, s \in [t,T], \\
  X_t &= x.
\end{align}
We are interested in the solutions $(Y^{t,x},Z^{t,x},K^{t,x}) \in \mathbb S^2_d(\F^0) \times \H^2_{d \times \kappa}(\F^0) \times \A^2_d(\F^0)$ of \eqref{orbsde}-\eqref{orbsde2}-\eqref{orbsde3}, where the terminal condition satisfies $\xi = g(X^{t,x}_T)$, and the driver satisfies $f(\omega,s,y,z) = \psi(s,X^{t,x}_s(\omega),y,z)$ for some deterministic measurable functions $g, \psi$. We next give the precise set of assumptions we need to obtain our results.


\noindent For sake of completeness, we recall here the existence result proved in \cite{chassagneux2018obliquely}, see also \cite{DAFH17}.
\begin{Assumption} \label{assumptionMarkov} 
 There exist $p \ge 0$ and $L \ge 0$ such that
  \begin{enumerate}[i)]
  \item
\begin{align*}
      |\psi(t,x,y,z)| \le L(1+|x|^p+|y|+|z|).
    \end{align*}
    Moreover, $\psi(t,x,\cdot,\cdot)$ is continuous on $\R^d\times\R^{d \times \kappa}$ for all $(t,x) \in [0,T]\times\R^q$.
  \item $(b,\sigma) : [0,T] \times \R^q \to \R^q \times \R^{q \times \kappa}$ is a measurable function satisfying, for all $(t,x,y) \in [0,T] \times \R^q \times \R^q$,
    \begin{align*}
      |b(t,x)| + |\sigma(t,x)| &\le L(1+|x|),\\
      |b(t,x)-b(t,y)|+|\sigma(t,x)-\sigma(t,y)|&\le L|x-y|.
    \end{align*}
  \item $g : \R^q \to \R^d$ is measurable and for all $(t,x) \in [0,T] \times \R^q$, we have
    \begin{align*}
      |g(t,x)| \le L(1+|x|^p).
    \end{align*}
  \item Let $\mathcal{X} = \{\mu(t,x;s,dy),x \in \R^q \textrm{ and } 0\leq t \leq s \leq T\}$ be the family of laws of $X^{t,x}$ on $\R^q$, i.e., the measures such that $\forall A \in \mathcal{B}(\R^q)$, $\mu(t,x;s,A) = \mathbb{P}(X_s^{t,x} \in A)$.
    For any $t \in [0,T)$, for any $\mu(0,a;t,dy)$-almost every $x \in \R^q$, and any $\delta \in ]0,T-t]$, there exists an application $\phi_{t,x}: [t,T]\times \R^d \rightarrow \R_+$ such that:
    \begin{enumerate}
    \item $\forall k \geq 1$, $\phi_{t,x} \in L^2([t+\delta,T] \times [-k,k]^q; \mu(0,a;s,dy)ds)$,
    \item $\mu(t,x;s,dy)ds = \phi_{t,x}(s,y)\mu(0,a;s,dy)ds$ on $[t+\delta,T] \times \R^q$.
    \end{enumerate}
  \item $H : \R^d \to \R^{d \times d}$ is a measurable function, and there exists $\eta > 0$ such that, for all $(y,y') \in \cD\times\R^d$ and $v \in \fn(\mathfrak{P}(y))$, where $\mathfrak{P}$ is the projection on $\cD$, we have
    \begin{align*}
      v^\top H(y) v &\ge \eta,\\
      |H(y')| &\le L.
    \end{align*}
    Moreover, $H$ is continuous on $\cD$.
  \end{enumerate}
\end{Assumption}

\begin{Remark} Assumption iv) is true as soon as $\sigma$ is uniformly elliptic, see \cite{HLP97}. \end{Remark}

\noindent The existence result in the Markovian setting reads as follows.

\begin{Theorem}[\cite{chassagneux2018obliquely}, Theorem 4.1] \label{thm-existence-cr}
  Under Assumption \ref{assumptionMarkov}, there exists a solution $(Y^{t,x},Z^{t,x},\Psi^{t,x}) \in \mathbb S^2_d(\F^0) \times \H^2_{d \times \kappa}(\F^0) \times \H^2_d(\F^0)$ of the following system
  \begin{align}
    \hspace{-1cm} \label{orbsde-gen-1} Y_s &= g(X^{t,x}_T) + \int_s^T \!\!\!\!\psi(u,X^{t,x}_u,Y_u,Z_u) \ud u - \int_s^T \!\!\!Z_u \ud W_u - \int_s^T \!\!\!\! H(Y_u) \Psi_u \ud u, s \in [t,T], \\
    \label{orbsde-gen-2} Y_s &\in \cD, \mbox{ } \Psi_s \in \cC(Y_s),\mbox{ } t \le s \le T,\\
    \label{orbsde-gen-3} \int_t^T &1_{\{Y_s \not \in \partial \cD\}} |\Psi_s| \ud s = 0.
  \end{align}
\end{Theorem}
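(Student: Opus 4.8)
This statement is recalled from \cite{chassagneux2018obliquely}, where the full proof can be found; I sketch here the strategy, since it is the engine behind the existence results of Section \ref{existence}.

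The plan is to obtain $(Y,Z,\Psi)$ as the limit of a penalisation scheme. For $n \ge 1$, I would first consider the standard Lipschitz BSDE
\begin{align*}
  Y^n_s = g(X^{t,x}_T) + \int_s^T \psi(u,X^{t,x}_u,Y^n_u,Z^n_u)\,\ud u - \int_s^T Z^n_u\,\ud W_u - n\int_s^T H(Y^n_u)\bigl(Y^n_u-\mathfrak{P}(Y^n_u)\bigr)\,\ud u,
\end{align*}
(regularising $H$ beforehand, if necessary, so that the generator is genuinely Lipschitz in $y$, $\mathfrak{P}$ being $1$-Lipschitz) and set $\Psi^n_u := n\,(Y^n_u-\mathfrak{P}(Y^n_u))$, which belongs to $\cC(\mathfrak{P}(Y^n_u))$ by the variational characterisation of the Euclidean projection onto the convex set $\cD$. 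The first substantial step is a set of a priori estimates uniform in $n$: fixing $y^0$ in the interior of $\cD$ and $\rho>0$ with $B(y^0,\rho)\subset\cD$, one has $(\Psi^n_u)^\top(\mathfrak{P}(Y^n_u)-y^0)\ge\rho\,|\Psi^n_u|$; applying It\^o's formula to $|Y^n_s-y^0|^2$ and combining this with the coercivity $v^\top H(y)v\ge\eta$ for $v\in\fn(\mathfrak{P}(y))$, the bound $|H|\le L$, the linear growth of $\psi$ and the moment estimates on $\sup_s|X^{t,x}_s|$ (through $p$ and Assumption \ref{assumptionMarkov} iii)), I would deduce that $\|Y^n\|_{\mathbb S^2_d}$, $\|Z^n\|_{\H^2_{d\times\kappa}}$ and $\|\Psi^n\|_{\H^2_d}$ are bounded uniformly in $n$.

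The core of the argument is the passage to the limit. By the Markovian structure, $Y^n_s=u^n(s,X^{t,x}_s)$ for deterministic functions $u^n$; exploiting Assumption \ref{assumptionMarkov} iv) — the density bound relating the laws $\mu(t,x;s,\cdot)$ to the reference measure $\mu(0,a;s,\cdot)$ — one upgrades the uniform bounds to \emph{strong} compactness of $(Y^n)$ in $L^2([t,T]\times\R^q;\mu(0,a;s,\ud y)\,\ud s)$, so that along a subsequence $Y^n\to Y$ in $\H^2_d$ and $\ud s\otimes\ud\P$-a.e., while $Z^n\rightharpoonup Z$ and $\Psi^n\rightharpoonup\Psi$ weakly in $\H^2$. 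Then $\mathfrak{P}(Y^n_u)\to Y_u$ as well, so $Y$ is $\cD$-valued ($\cD$ being closed), and, the graph $\{(y,v):y\in\cD,\ v\in\cC(y)\}$ being closed and convex, $\Psi_u\in\cC(Y_u)$ in the limit; the ``active'' condition \eqref{orbsde-gen-3} follows because, by continuity, $Y^n_u\in\cD$ and hence $\Psi^n_u=0$ for all large $n$ on the set $\{Y_u\notin\partial\cD\}$. Since $H$ is continuous and bounded on $\cD$ and $Y^n\to Y$ strongly, $H(Y^n)\to H(Y)$ in $\H^2$, hence $H(Y^n)\Psi^n\rightharpoonup H(Y)\Psi$ weakly; passing to the limit in the penalised BSDE — strong convergence of $Z^n$ in $\H^2$ being recovered along the way by standard BSDE stability arguments, which is needed because $\psi$ is only assumed continuous in $z$ — one checks that $(Y,Z,\Psi)$ solves \eqref{orbsde-gen-1}--\eqref{orbsde-gen-3}.

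The hard part is the strong compactness of $(Y^n)$: weak convergence alone does not suffice to identify the nonlinear reflection term $H(Y_u)\Psi_u$, and it is exactly Assumption \ref{assumptionMarkov} iv) that makes this possible, by allowing one to compare $Y^n$ started from $(t,x)$ with the flow started from $(0,a)$ and to run a compactness argument in the associated $L^2$ space; everything else is the now-standard combination of penalisation a priori bounds and It\^o's formula. The remaining effort for the switching problems of this paper — carried out in the next subsections — is to construct an operator $H$ meeting Assumption \ref{assumptionMarkov} v).
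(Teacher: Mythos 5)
The paper does not prove this statement: Theorem \ref{thm-existence-cr} is quoted verbatim from \cite{chassagneux2018obliquely} (their Theorem 4.1) and used as a black box, so there is no internal proof to compare your sketch against. Within this paper the only content attached to the theorem is the verification of its hypotheses — above all the construction of an operator $H$ satisfying Assumption \ref{assumptionMarkov} v) and the compatibility relation \eqref{relation-cones} — which you correctly identify at the end as the real work of Section \ref{existence}.

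As a reconstruction of the cited proof, your penalisation outline is plausible and the individual ingredients (the normal-cone inequality $(\Psi^n_u)^\top(\mathfrak{P}(Y^n_u)-y^0)\ge\rho\,|\Psi^n_u|$ from $B(y^0,\rho)\subset\cD$, the coercivity $v^\top H v\ge\eta$, the role of Assumption \ref{assumptionMarkov} iv) in transferring a.e.\ convergence of the deterministic representatives $u^n(s,\cdot)$ from the reference measure $\mu(0,a;s,\ud y)\,\ud s$ to the law of $X^{t,x}$) are the right ones. Two steps are asserted rather than argued and are genuinely delicate: (i) the uniform bound on $\|\Psi^n\|_{\H^2_d}$ — the It\^o computation on $|Y^n_s-y^0|^2$ combined with the cone inequality naturally controls $\esp{(\int_t^T|\Psi^n_u|\,\ud u)^2}$, i.e.\ the total variation of the reflection term, and upgrading this to an $\H^2$ bound on the density $\Psi^n$ itself requires additional structure; and (ii) the strong $L^2$ compactness of $(Y^n)$, which you rightly flag as the hard part but which does not follow from the uniform bounds alone without the monotonicity or comparison arguments that the $L^2$-domination condition is designed to enable. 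Since the theorem is external to this paper, these are gaps in a sketch of someone else's proof rather than in anything this paper needs; for the present purposes it would have sufficed to note that the result is imported and that the burden here is to check its hypotheses.
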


The main point to invoke Theorem \ref{thm-existence-cr} is then to construct a function $H : \R^d \to \R^{d \times d}$ which satisfies  Assumption \ref{assumptionMarkov} v) and such that
\begin{align} \label{relation-cones}
  H(y) v \in \cC_o(y),
\end{align}
for all $y \in \cD$ and $v \in \cC(y)$, where $\cC_o(y)$ is the cone of directions of reflection, given here by
\begin{align*}
  \cC_o(y) := -\sum_{i=1}^d \R_+ e_i \ind{y_i = \max_{u \in \cC} \left\{ \sum_{j=1}^d P^u_{i,j} y_j - \bar{c}_{i}^u \right\}}.
\end{align*}
If Assumption \ref{assumptionMarkov} i), ii), iii), iv) are also satisfied, we obtain the existence of a solution to \eqref{orbsde-gen-1}-\eqref{orbsde-gen-2}-\eqref{orbsde-gen-3}. Setting $K^{t,x}_s := - \int_t^s H(Y^{t,x}_u) \Psi^{t,x}_u \ud u$ for $t \le s \le T$ shows that $(Y^{t,x},Z^{t,x},K^{t,x})$ is a solution to \eqref{orbsde}-\eqref{orbsde2}-\eqref{orbsde3}.

\subsubsection{Well-posedness result in the uncontrolled case }

We assume here Assumption \ref{ass-uncontrolled-irred}. In addition, we need to introduce the following technical assumption in order  to construct $H$ satisfying Assumption \ref{assumptionMarkov} v) and \eqref{relation-cones}.
\textcolor{black}{
\begin{Assumption} \label{ass-copositivity}
  For all $1 \le i \le d$, the matrix $Q^{(i,i)}$ is strictly copositive, meaning that for all $0 \preccurlyeq x \in \R^{d-1}, x \neq 0$, we have
  \begin{align}
    x^\top Q^{(i,i)} x > 0.
  \end{align}
\end{Assumption}
}
\noindent Our main result for this section is the following theorem.

\begin{Theorem} \label{constr H}
  Suppose that Assumption \ref{assumptionMarkov} i), ii), iii), iv), Assumption \ref{ass-uncontrolled-irred} and Assumption \ref{ass-copositivity} are satisfied and that $\cD$ has non-empty interior.
  
  \noindent Then, there exists $H : \R^d \to \R^{d \times d}$ satisfying  \ref{assumptionMarkov} v).
  Consequently, there exists a solution to \eqref{orbsde}-\eqref{orbsde2}-\eqref{orbsde3} with $\xi = g(X_T)$ and $f(\omega,s,y,z) = \psi(s,X^{t,x}_s(\omega),y,z)$. Moreover this solution is unique if we assume also Assumption \ref{hyp sup section 2}-ii). 
\end{Theorem}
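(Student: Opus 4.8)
Following the route indicated right after Theorem~\ref{thm-existence-cr}, the whole statement reduces to producing a measurable $H:\R^d\to\R^{d\times d}$, bounded by $L$ and continuous on $\cD$, that satisfies the coercivity bound in Assumption~\ref{assumptionMarkov}~v) together with the cone inclusion \eqref{relation-cones}. Indeed, granting this and Assumption~\ref{assumptionMarkov}~i)--iv), Theorem~\ref{thm-existence-cr} yields $(Y^{t,x},Z^{t,x},\Psi^{t,x})$ solving \eqref{orbsde-gen-1}--\eqref{orbsde-gen-3}, and $K^{t,x}_\cdot:=-\int_t^\cdot H(Y^{t,x}_u)\Psi^{t,x}_u\,\ud u$ then lies in $\A^2_d(\F^0)$ and increases coordinatewise only where the corresponding constraint is active (this is precisely what \eqref{relation-cones} together with $\Psi_s\in\cC(Y_s)$ encodes), so $(Y^{t,x},Z^{t,x},K^{t,x})$ solves \eqref{orbsde}--\eqref{orbsde2}--\eqref{orbsde3}. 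For the uniqueness claim, $\cD$ has non-empty interior by hypothesis and $P_{i,i}<1$ for all $i$ by irreducibility (Assumption~\ref{ass-uncontrolled-irred}), so Assumption~\ref{exist}~ii) holds and Proposition~\ref{prop unicite couts generaux} applies once Assumption~\ref{hyp sup section 2}~ii) is added. Hence everything is in the construction of $H$.

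Specialising Section~\ref{section prop du domaine de reflection} to $\cC=\{0\}$, set $Q=I_d-P$, $s(y):=Qy+\bar c$ and $A(y):=\{i:s_i(y)=0\}$. Then $\cD=\{s\succcurlyeq 0\}$ is invariant along $\1=\sum_ie_i$ (Lemma~\ref{lem-general}), the outward normal cone at $y$ is $\cC(y)=-\pos\{q_i:i\in A(y)\}$ with $q_i$ the $i$-th row of $Q$, and the reflection cone is $\cC_o(y)=-\pos\{e_i:i\in A(y)\}$. Since $\cD$ has non-empty interior, $\mu\bar c>0$ (Theorem~\ref{th set of cns}); as $\mu Q=0$ this gives the identity $\sum_{i=1}^d\mu_is_i(y)=\mu\bar c$ for every $y\in\R^d$, whence $|A(y)|\le d-1$ and $\max_is_i(y)\ge\mu\bar c$. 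The key idea is to look for $H$ characterised by
\[
  H(y)\,q_i=e_i\quad\text{whenever }i\in A(y),
\]
and $H(y)\1=0$. This makes \eqref{relation-cones} immediate: if $v=-\sum_{i\in A(y)}\lambda_iq_i\in\cC(y)$ with $\lambda\succcurlyeq0$, then $H(y)v=-\sum_{i\in A(y)}\lambda_ie_i\in\cC_o(y)$. Moreover, on a face with active set $A$ it reduces the coercivity, for $v$ a unit outward normal, to $v^\top H(y)v=\lambda_A^\top Q_{AA}\lambda_A$, where $Q_{AA}$ is the $A$-principal submatrix of $Q$. This is exactly where Assumption~\ref{ass-copositivity} is used: since $|A|\le d-1$, $Q_{AA}$ is a principal submatrix of $Q^{(i_0,i_0)}$ for some $i_0\notin A$, hence strictly copositive, so $\lambda_A^\top Q_{AA}\lambda_A\ge c_A|\lambda_A|^2$ with $c_A>0$ by compactness of $\{\lambda_A\succcurlyeq0,|\lambda_A|=1\}$; combined with $|v|\le|Q^\top|\,|\lambda_A|$ and the finiteness of the face set, this produces a uniform $v^\top H(y)v\ge\eta>0$.

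The genuinely delicate point — and the main obstacle — is the continuity of $H$: one cannot impose $H(y)q_i=e_i$ for all $i$ simultaneously, because the relation $\sum_i\mu_iq_i=0$ (from $\mu Q=0$) would force $\sum_i\mu_ie_i=0$. The columns attached to inactive constraints must therefore be defined continuously, compatibly with $\sum_i\mu_iH(y)q_i=0$, while degenerating to $e_i$ as $s_i(y)\downarrow0$. I would take
\[
  H(y)q_i:=\beta_i(y)e_i+(1-\beta_i(y))\,r(y),\qquad \beta_i(y):=\varphi(s_i(y)),
\]
for a fixed continuous, strictly decreasing $\varphi:\R_+\to[0,1]$ with $\varphi(0)=1$, where $r(y):=-\bigl(\sum_j\mu_j\beta_j(y)e_j\bigr)\big/\bigl(1-\sum_j\mu_j\beta_j(y)\bigr)$ is the unique common residual making $\sum_i\mu_iH(y)q_i=0$ an identity. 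Because $\max_js_j(y)\ge\mu\bar c>0$ the denominator stays bounded away from $0$, so $r$, and hence $H$, is bounded; $H$ is clearly continuous on $\cD$; and $\beta_i(y)=1$ on $\{s_i(y)=0\}$ gives $H(y)q_i=e_i$ for $i\in A(y)$, so the two properties of the previous paragraph hold. Extending $H$ off $\cD$ by $H:=H\circ\mathfrak{P}$ (projection onto $\cD$) preserves continuity, measurability and the bound $|H|\le L$ on all of $\R^d$.

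With $H$ so constructed, Assumption~\ref{assumptionMarkov}~v) and \eqref{relation-cones} hold, so Theorem~\ref{thm-existence-cr} yields the stated existence and, under the additional Assumption~\ref{hyp sup section 2}~ii), Proposition~\ref{prop unicite couts generaux} gives uniqueness as in the first paragraph. The remaining verifications — boundedness of $r$, the fact that only the active columns enter \eqref{relation-cones} and the quadratic form, and the routine checks of Assumption~\ref{assumptionMarkov}~v) — are short given the polyhedral description above; the conceptual heart is the simultaneous reconciliation of the boundary prescription $H(y)q_i=e_i$, the kernel relation $\sum_i\mu_iq_i=0$, and global continuity, which is achieved by the choice of $r(y)$.
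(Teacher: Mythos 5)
Your argument is correct, and its outer shell — reduce everything to constructing $H$ with Assumption \ref{assumptionMarkov} v) and \eqref{relation-cones}, invoke Theorem \ref{thm-existence-cr} for existence, and Proposition \ref{prop unicite couts generaux} for uniqueness — is exactly the paper's. The construction of $H$ itself is genuinely different. The paper exploits the simplex structure of $\Dslice$: it identifies the normal cone at any $y$ through the barycentric coordinates relative to the $d$ vertices $y^i=(C_{d,i}-C_{j,i})_{j}$ (Proposition \ref{pr convex hull}), defines $H(y^i)$ at each vertex as the linear map sending the $d-1$ active normals $n_j=-Q_{j,\cdot}^\top$, $j\neq i$, to $-e_j$, extends to $\Dslice$ by barycentric interpolation $H(y)=\sum_i\lambda_i H(y^i)$, and then to $\R^d$ by translation along $\1$ and projection. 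You instead give one global formula for $H(y)$ on $\cD$ in terms of the slacks $s_i(y)=(Qy+\bar c)_i$, with a soft weight $\varphi(s_i(y))$ that pins $H(y)q_i=e_i$ exactly on the active set and a residual $r(y)$ enforcing the single compatibility relation $\sum_i\mu_i H(y)q_i=0$ imposed by $\mu Q=0$. Both routes land on the identical boundary quadratic form $\lambda_A^\top Q_{AA}\lambda_A$ and use strict copositivity of principal submatrices of $Q^{(i_0,i_0)}$ in the same way. Your version buys automatic continuity (no case distinction on the active set) and an explicit uniform $\eta>0$ via compactness over the finitely many faces — a point the paper leaves implicit, asserting only pointwise strict positivity; the paper's version buys fully explicit vertex matrices and reuses the convex-hull description already established. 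Two small facts you should state rather than leave as "routine": the denominator bound $1-\sum_j\mu_j\beta_j(y)\ge \min_i\mu_i\,(1-\varphi(\mu\bar c))>0$, which follows from $\sum_i\mu_i s_i(y)=\mu\bar c>0$ on $\cD$; and boundedness of $H$ on the unbounded set $\cD$, which holds because $H(y)$ depends on $y$ only through $s(y)$ and is therefore invariant along $\1$, hence determined by its values on the compact slice $\Dslice$.
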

\proof 
We first observe that uniqueness follows from Proposition \ref{prop unicite couts generaux}. We now focus on proving existence of solution which amounts to exhibit a convenient $H$ function. 
The general idea is to start by constructing $H$
on the points $(y^i)_{1 \le i \le d}$, given by
  \begin{align}
    y^i &:= (C_{d,i} - C_{j,i})_{1\le j \le d}, 
    \end{align}
then, using Proposition \ref{pr convex hull}, we can extend it on the whole $\Dslice$ by linear combination, and finally we extend $H$ on all $\R^d$ by using the geometry of $\cD$.
\\
The proof is then divided into several steps.
\vspace{1mm}
\\  
1. We start by computing the outward normal cone $\cC(y)$ for all $y \in \Dslice$. Let us set $y \in \Dslice$. Thanks to Proposition \ref{pr convex hull}, there exists a unique $(\lambda_i)_{1 \leqslant i \leqslant d} \in [0,1]^d$ such that 
$$y = \sum_{i=1}^d \lambda_i y^i, \quad \sum_{i=1}^d \lambda_i =1.$$
Let us denote $\mathcal{E}_y = \{1 \leqslant i \leqslant d |\lambda_i>0\}$.
We will show that 
\begin{align}
\label{resultat a prouver}
    \cC(y) = \sum_{j \notin \mathcal{E}_y} \R_+ n_j.
  \end{align}
  where 
   $
    n_i := (-Q_{i,j})_{1 \le j \le d},
  $
and with the convention $\cC(y)=\emptyset$ when $\mathcal{E}_y = \{1,...,d\}$.
Let us remark that the result is obvious when $\cC(y)=\emptyset$, since, in this case, $y$ is in the interior of $\cD$. So we will assume in the following that $\cC(y)\neq \emptyset$.\\
\noindent 1.a. First, let us show that for any $1 \leqslant i \leqslant d$, $(n_j)_{j \neq i}$ is a basis of $\{y \in \R^d | \sum_{k=1}^d v_k = 0\}$. Let $1 \le i \neq j \le d$. It is clear that $n_j \in \{v \in \R^d : \sum_{k=1}^d v_k = 0\}$. Since it is a hyperplan of $\R^d$ and that the family $(n_j)_{j \neq i}$ has $d-1$ elements, it is enough to show that the vectors are linearly independent. We observe that the matrix whose lines are the $n^{(i)}_j, j \neq i,$ is $- Q^{(i,i)}$. Since $P$ is irreducible, $Q^{(i,i)}$ is invertible. The vectors $n^{(i)}_j, j \neq i$ form a basis of $\R^{d-1}$, hence the vectors $(n_j)_{j \neq i}$ form a basis of $\{v \in \R^d | \sum_{k=1}^d v_k = 0\}$.\\
\noindent 1.b. We set now $j \notin \mathcal{E}_y$ and we will show that $n_j \in \cC(y)$. For any $z \in \cD$, by definition of $\cD$, we have
  \begin{align*}
    \bar{c}_j \ge \sum_{k=1}^d P_{j,k} z_k - z_j = n_j^\top z, 
    \end{align*}
and for all $i \in \mathcal{E}_y$, by definition of $y^{i}$, we have 
  \begin{align*}
    \bar{c}_{j} = \sum_{k=1}^d P_{j,k} y^{i}_k - y^{i}_j = n_j^\top y^{i}.
  \end{align*}
  This gives $n_j^\top (z - y) = n_j^\top z - \sum_{i \in \mathcal{E}_y} \lambda_{i}n_j^\top y^{i} \le 0$, hence $n_j \in \cC(y)$.\\
\noindent 1.c. We now set $i=\min \mathcal{E}_y$. Conversely, since $(n_j)_{j \neq i}$ is a basis of $\{v \in \R^d : \sum_{i=1}^d v_i = 0\} \ni \cC(y)$, see Lemma \ref{lem-general}, for $v \in \cC(y)$ there exists a unique $\alpha = (\alpha_j)_{j \neq i} \in \R^{d-1}$ such that $v = \sum_{j \neq i} \alpha_j n_j = (n_j)_{j \neq i} \alpha$. We will show here that $\alpha_{\ell}=0$ for all $\ell \in \mathcal{E}_y \setminus \{i\}$ and $\alpha_{\ell}\geq 0$ for all $\ell \notin \mathcal{E}_y$. \\
  For all $z \in \cD$, previous calculations give us:
  \begin{align*}
    0 &\ge \alpha^\top \left[(n_j)_{j \neq i}\right]^\top(z-y) = - \alpha^\top Q^{(i,\cdot)} (z-\sum_{\ell \in \mathcal{E}_y} \lambda_\ell y^\ell) = - \alpha^\top \left[Q^{(i,\cdot)}z - \sum_{\ell \in \mathcal{E}_y} \lambda_\ell Q^{(i,\cdot)}y^\ell \right].
  \end{align*}
  Let us recall that for any $j \neq i$, by definition of $y^j$, one gets $Q^{(i,\cdot)}y^j + \bar{c}^{(i)} = \frac{\mu \bar{c}}{\mu_j} e_j$, and $Q^{(i,\cdot)}y^i + \bar{c}^{(i)} = 0$.  Thus, previous inequality becomes
  \begin{align}\nonumber
    0 &\le  \alpha^\top \left[Q^{(i,\cdot)}z - \sum_{\ell \in \mathcal{E}_y\setminus \{i\} } \lambda_\ell \left(\frac{\mu \bar{c}}{\mu_\ell} e_\ell - \bar{c}^{(i)}\right) +\lambda_i \bar{c}^{(i)} \right]\\ \label{inegalite centrale}
    &= \alpha^\top \left[Q^{(i,\cdot)}z +\bar{c}^{(i)} - \sum_{\ell \in \mathcal{E}_y\setminus \{i\} } \lambda_\ell \frac{\mu \bar{c}}{\mu_\ell} e_\ell \right].
  \end{align}
  By taking $z=y^j$ in \eqref{inegalite centrale}, with $j \in \mathcal{E}_y\setminus \{i\}$, we get 
  \begin{align}
  \label{ineg alpha l}
   0&\le \alpha^\top \left[\frac{\mu \bar{c}}{\mu_j} e_j - \sum_{\ell \in \mathcal{E}_y\setminus \{i\} } \lambda_\ell \frac{\mu \bar{c}}{\mu_\ell} e_\ell \right]
  \end{align}
and so, we can sum, over $j$, previous inequality with positive weights $\alpha_j$, to obtain
  \begin{align*}
   0&\le \left(1-\sum_{j \in \mathcal{E}_y\setminus \{i\}} \lambda_j\right)\alpha^\top \left[\sum_{\ell \in \mathcal{E}_y\setminus \{i\} } \lambda_\ell \frac{\mu \bar{c}}{\mu_\ell} e_\ell \right].
  \end{align*}
  Then $0\le \alpha^\top \left[\sum_{\ell \in \mathcal{E}_y\setminus \{i\} } \lambda_\ell \frac{\mu \bar{c}}{\mu_\ell} e_\ell \right]$ since $\lambda_i>0$. Moreover, we have also $0\ge \alpha^\top \left[\sum_{\ell \in \mathcal{E}_y\setminus \{i\} } \lambda_\ell \frac{\mu \bar{c}}{\mu_\ell} e_\ell \right]$ by taking $z=y^i$ in \eqref{inegalite centrale}, which gives us that 
  \begin{align}
   \label{egalite sum alpha l}
   \alpha^\top \left[\sum_{\ell \in \mathcal{E}_y\setminus \{i\} } \lambda_\ell \frac{\mu \bar{c}}{\mu_\ell} e_\ell \right]=0.
  \end{align}
  We recall that $\mu \bar{c}>0$ since $\cD$ has non-empty interior (see Theorem \ref{th set of cns}). Pluging \eqref{egalite sum alpha l} in \eqref{ineg alpha l} gives us that $\alpha_j \ge 0$ for all $j \in \mathcal{E}_y\setminus \{i\}$, which, combined with \eqref{egalite sum alpha l} allows to conclude to $\alpha_j = 0$ for all $j \in \mathcal{E}_y\setminus \{i\}$.\\
  \noindent Now we apply \eqref{inegalite centrale} with $z=y^j$ for $j \notin \mathcal{E}_y$: hence $0 \leqslant \alpha_j \frac{\mu \bar{c}}{\mu_j}$ for all  $j \notin \mathcal{E}_y$, which concludes the proof of \eqref{resultat a prouver}.\\
\noindent 2. Then, we construct $H(y)$. Let us start by $H(y^i)$ for any $1 \le i \le d$. 
Fix $1 \le i \le d$, and let $B^i \in \R^{(d-1) \times (d-1)}$ be the base change matrix from $(-n_j^{(i)})_{j \neq i}$ to the canonical basis of $\R^{d-1}$. We set $H(y^i) := I^i B^i P^i$, with $I^i : \R^{d-1} \to \R^d$ and $P^i : \R^d \to \R^{d-1}$ the linear maps defined by
  \begin{align}
    I^i(x_1,\dots,x_{d-1}) &= (x_1,\dots,x_{i-1},0,x_i,\dots,x_{d-1}), \\
    P^i(x_1,\dots,x_d) &= (x_1,\dots,x_{i-1},x_{i+1},\dots,x_d).
  \end{align}
  Now we set $H(y) := \sum_{i \in\mathcal{E}_y} \lambda_i H(y^i)$. Let us take $v \in \cC(y)$. Thanks to \eqref{resultat a prouver}, we know that $v = \sum_{j=1}^d \alpha_j n_j$ for some $(\alpha_j)_{1 \le j \le d} \in (\mathbb{R}^+)^d$ and such that $\alpha_j=0$ when $j \in \mathcal{E}_y$. Since $n_k = - Q_k^\top$, for all $1\le k \le d$, we have $v = -Q^\top \alpha$.
  By construction, we get that 
  $$H(y)v = - \sum_{j \notin \mathcal{E}_y} \alpha_j e_j= -\alpha \in \cC_o(y).$$
  It remains to check that Assumption \ref{assumptionMarkov}-v) is fulfilled. If $v \neq 0$, which is equivalent to  $\alpha \neq 0$, we have, for $i \in \mathcal{E}_y$,
  \begin{align*}
   v^\top H(y)v= \alpha^\top Q \alpha= (\alpha^{(i)})^\top Q^{(i,i)} \alpha^{(i)} >0, 
  \end{align*}
  due to Assumption \ref{ass-copositivity} and the fact that $\alpha_i=0$.\\
\noindent 3. We have constructed $H$ on $\Dslice$ with needed properties. 
  Finally, we set $H(x) = H(x - x_d \sum_{i=1}^d e_i)$ for all $x \in \cD$ and $H(x) = H(\mathfrak{P}(x))$ for $x \in \R^d$ and the proof is finished.
  \eproof

\begin{Remark} i) Assumption \ref{ass-copositivity} is satisfied as soon as $P$ is symmetric and irreducible. Indeed,  $Q^{(i,i)}$ is then nonsingular, symmetric and diagonally dominant, hence positive definite, for all $i \in \set{1,\dots,d}$.
\\
ii) In dimension $3$, if $P$ is irreducible, then Assumption \ref{ass-copositivity} is automatically satisfied. Indeed, we have
  \begin{align}
    P = \left(\begin{array}{ccc}
                0 & p & 1-p \\
                q & 0 & 1-q \\
                r & 1-r & 0
              \end{array}\right)
  \end{align}
  for some $p,q,r \in [0,1]$ satisfying to $0 \le p+q, 1+r-p, 2-(q+r) < 2$ by irreducibility. Thus, for $i=1$ for example,
  \begin{align}
    Q^{(1,1)} + \left(Q^{(1,1)}\right)^\top = \left(\begin{array}{cc}
                                                      2 & -(p+q) \\
                                                      -(p+q) & 2
                                                    \end{array}\right)
  \end{align}
  is nonsingular, symmetric and diagonally dominant, hence positive definite. Thus $x^\top Q^{(1,1)} x = \frac{1}{2} x^\top \left(Q^{(1,1)} + \left(Q^{(1,1)}\right)^\top\right) x > 0$ for all $x \neq 0$.
\\
iii)  However, in dimension greater than $3$, it is not always possible to construct a function $H$ satisfying to Assumption \ref{assumptionMarkov}. For example in dimension $4$, consider the following matrix:
  \begin{align}
    P = \left(\begin{array}{cccc}
                0 & \frac{\sqrt{3}}{2} & 0 & 1 - \frac{\sqrt{3}}{2} \\
                1 - \frac{\sqrt{3}}{2} & 0 & \sqrt{3} - 1 & 1 - \frac{\sqrt{3}}{2} \\
                0 & 1 & 0 & 0 \\
                \frac13 & \frac13 & \frac13 & 0
              \end{array}\right),
  \end{align}
  together with positive costs $c$ to ensure that the domain has non-empty interior.\\
  It is an irreducible stochastic matrix, and let's consider the extremal point $y^4$ such that
  \begin{align}
    y^4_4 &= 0, \\
    y^4_1 &= \frac{\sqrt 3}{2} y^4_2 - c_1, \\
    y^4_2 &= (1 - \frac{\sqrt 3}{2})y^4_1 + (\sqrt 3 - 1)y^4_3 - c_2, \\
    y^4_3 &= y^4_2 - c_3.
  \end{align}
  We have $\cC(y^4) = \R_+ (-1, \frac{\sqrt 3}{2}, 0, 1 - \frac{\sqrt 3}{2})^\top + \R_+ (1 - \frac{\sqrt 3}{2}, -1, \sqrt 3 - 1, 1 - \frac{\sqrt 3}{2})^\top + \R_+(0,1,-1,0)^\top =: \sum_{i=1}^3 \R_+ n_i$.\\
  If $H(y^4)$ satisfies $H(y^4) n_1 = (-1,0,0,0), H(y^4) n_2 = (0,-1,0,0)$ and $H(y^4) n_3 = (0,0,-1,0)$, consider $v = \frac12 n_1 + n_2 + \frac{\sqrt 3}{2}n_3 \in \cC(y^4)$. Then it is easy to compute $v^\top H v = 0$, hence it is not possible to construct $H(y^4)$ at this point satisfying Assumption \ref{assumptionMarkov}.
\end{Remark}

\subsubsection{An example of switching problem with controlled randomization}

We assume here that $\cC = [0,1]$ and we consider the example of switching problem with controlled randomisation given by \eqref{transitions et couts exemple controle}. 
Since the cost functions are positive, $\cD$ has a non-empty interior.
\begin{Theorem}
\label{th existence  markov ex controled}
There exists a function $H :\R^3\to \R^{3 \times 3}$ that satisfies Assumption \ref{assumptionMarkov}-v) and such that
\begin{align*} 
  H(y) v \in \cC_o(y), \quad \forall y \in \cD,\, v \in \cC(y). 
\end{align*}
 Consequently, if we assume that Assumption \ref{assumptionMarkov}(i)-(iv) is fulfilled, there exists a solution to \eqref{orbsde}-\eqref{orbsde2}-\eqref{orbsde3} with $\xi = g(X_T)$ and $f(\omega,s,y,z) = \psi(s,X^{t,x}_s(\omega),y,z)$. Moreover this solution is unique if we assume also Assumption \ref{hyp sup section 2}-ii).
 %
\end{Theorem}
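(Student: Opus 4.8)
The plan is to build explicitly, exploiting that $d=3$ and the symmetry of the example, an oblique reflection operator $H:\R^3\to\R^{3\times3}$ satisfying Assumption \ref{assumptionMarkov}-v) together with $H(y)v\in\cC_o(y)$ for every $y\in\cD$ and $v\in\cC(y)$. Once this is done, the conclusion follows exactly as in the paragraph after Theorem \ref{thm-existence-cr}: under the additional Assumption \ref{assumptionMarkov}(i)-(iv), that theorem yields a solution $(Y,Z,\Psi)$ of \eqref{orbsde-gen-1}-\eqref{orbsde-gen-2}-\eqref{orbsde-gen-3}, and setting $K_s:=-\int_t^sH(Y_u)\Psi_u\,\ud u$ turns it into a solution of \eqref{orbsde}-\eqref{orbsde2}-\eqref{orbsde3} with $\xi=g(X_T)$ and $f(\omega,s,y,z)=\psi(s,X^{t,x}_s(\omega),y,z)$; uniqueness, under Assumption \ref{hyp sup section 2}-ii), is then given by Proposition \ref{prop unicite couts generaux} since $\cD$ has non-empty interior. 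By Lemma \ref{lem-general} it is enough to define $H$ on $\Dslice$, and then set $H(x):=H(x-x_3\sum_{i=1}^3 e_i)$ for $x\in\cD$ and $H(x):=H(\mathfrak{P}(x))$ for $x\in\R^3$.

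First I would pin down the geometry of $\cD$. For $i\in\{1,2,3\}$ write $F_i(y):=y_i-\sup_{u\in[0,1]}\big\{(P^uy)_i-\bar c_i^u\big\}$, so that $\cD=\{y:F_1(y),F_2(y),F_3(y)\ge0\}$; each map $u\mapsto(P^uy)_i-\bar c_i^u$ is a strictly concave quadratic, hence has a unique maximiser $u_i^\star(y)\in[0,1]$ that depends continuously on $y$, and $F_i\in C^1$. A direct computation (using $P^u$ and $\bar c^u$ from \eqref{transitions et couts exemple controle}) shows that $\partial\Dslice$ is the union of three parabolic arcs, the arc carried by $\{F_i=0\}$ being swept out as $u_i^\star$ runs over $[0,1]$, and that these arcs meet at three corners, at each of which exactly two of the $F_i$ vanish while the two corresponding maximisers equal $0$ or $1$; in particular the active set $A(y):=\{i:F_i(y)=0\}$ has cardinality at most two. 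By the envelope theorem the outward normal to the active face $i$ at a boundary point $y$ is $n_i(y):=(P^{u_i^\star(y)}_{i,\cdot})^\top-e_i$, which is nonzero (recall $P^u_{i,i}=0$), lies in $\{v:\sum_kv_k=0\}$ (consistent with Lemma \ref{lem-general}) and is continuous in $y$; since $\cD$ has non-empty interior the constraint qualification holds, so $\cC(y)=\pos\{n_i(y):i\in A(y)\}$, whereas $\cC_o(y)=-\pos\{e_i:i\in A(y)\}$.

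Then I would construct $H$ by a local-to-global gluing. On each open arc $\{F_i=0\}$ I set $H_i(y):=e_i\,\ell_i(y)^\top$ with $\ell_i(y):=(1+|p_i(y)|^2)^{-1}(e_i-p_i(y))$ and $p_i(y):=(P^{u_i^\star(y)}_{i,\cdot})^\top$; one checks $H_i(y)\1=0$ and $H_i(y)n_i(y)=-e_i$, hence $H_i(y)\cC(y)\subset\cC_o(y)$ and $n_i(y)^\top H_i(y)n_i(y)=1$ there. Near a corner $c$ with $A(c)=\{i,j\}$, the triple $(n_i(y),n_j(y),\1)$ is a basis of $\R^3$ on a neighbourhood of $c$, so I let $H_c(y)$ be the matrix (continuous in $y$) determined by $H_c(y)n_i(y)=-e_i$, $H_c(y)n_j(y)=-e_j$, $H_c(y)\1=0$; then $H_c(y)\cC(y)\subset\cC_o(y)$, and for $v=\alpha_in_i(y)+\alpha_jn_j(y)$ one computes
\begin{align*}
v^\top H_c(y)v=\alpha_i^2+\alpha_j^2-\big(P^{u_i^\star(y)}_{i,j}+P^{u_j^\star(y)}_{j,i}\big)\alpha_i\alpha_j .
\end{align*}
The crucial observation is that the ``tangential'' configuration $P^{u_i^\star(y)}_{i,j}=P^{u_j^\star(y)}_{j,i}=1$ is incompatible with $F_i(y)=F_j(y)=0$ — it would force $y_i=y_j-1$ and $y_j=y_i-1$ simultaneously — so by continuity and compactness of the finite set of corners, $P^{u_i^\star(y)}_{i,j}+P^{u_j^\star(y)}_{j,i}\le2-\delta$ for some $\delta>0$, which makes the above quadratic form $\ge\frac{\delta}{2}(\alpha_i^2+\alpha_j^2)$, hence $\ge\eta|v|^2$ for some $\eta>0$ (using $|n_i|\le\sqrt2$). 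I then take a finite partition of unity $(\theta_\chi)$ on $\Dslice$ subordinate to the three open-arc neighbourhoods, the three corner neighbourhoods, and a neighbourhood of the interior (carrying any fixed bounded matrix, where no cone condition is imposed since $\cC(y)=\{0\}$ there), and set $H:=\sum_\chi\theta_\chi H_\chi$. On every overlap the relevant charts map $\cC(y)$ into the convex cone $\cC_o(y)$ and agree on each active normal $n_i(y)$ (all sending it to $-e_i$), so $H(y)\cC(y)\subset\cC_o(y)$, and $v^\top H(y)v$ stays bounded below by a positive constant on unit vectors of $\cC(y)$, i.e. on $\fn(\mathfrak{P}(y))$; continuity on $\Dslice$ (hence on $\cD$) and the bound $|H|\le L$ follow by compactness. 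This gives Assumption \ref{assumptionMarkov}-v) and completes the proof.

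The main obstacle is the geometric step: establishing that $\cD$ is, up to translation along $\sum_ie_i$, a ``curved triangle'' whose boundary involves only double corners, and, within that description, the uniform lower bound $v^\top H(y)v\ge\eta|v|^2$ — equivalently, that the reflection directions $-e_i$ are never tangential to $\partial\cD$. The identification of the normals via the envelope theorem, the partition-of-unity gluing, and the passage from \eqref{orbsde-gen-1}-\eqref{orbsde-gen-2}-\eqref{orbsde-gen-3} to \eqref{orbsde}-\eqref{orbsde2}-\eqref{orbsde3} are then routine.
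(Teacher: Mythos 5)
Your proposal is correct and reaches the same conclusion, but by a genuinely different construction of $H$ than the paper's. The paper likewise reduces to $\Dslice$ via Lemma \ref{lem-general}, finishes via Theorem \ref{thm-existence-cr} and Proposition \ref{prop unicite couts generaux}, and builds $H$ on the boundary; but it does so by writing explicit numerical matrices at the three vertices (chosen so that the two extremal normals are sent to $-e_i$ and $-e_j$) and then interpolating along each curved edge with a moving frame $R_s$, asserting that the required properties hold ``by construction''. You instead identify the outward normal of each active face by the envelope theorem, $n_i(y)=(P^{u_i^\star(y)}_{i,\cdot})^\top-e_i$ (legitimate here since $u\mapsto (P^u y)_i-\bar c^u_i$ is strictly concave, so the maximiser is unique and continuous), put a rank-one matrix on each open arc --- admissible because Assumption \ref{assumptionMarkov}-v) only requires positivity of the quadratic form on $\fn(y)$, not definiteness of $H$ --- use a basis-determined matrix near each corner, and glue with a partition of unity; the uniform ellipticity at the corners is made quantitative through the bound $P^{u_i^\star}_{i,j}+P^{u_j^\star}_{j,i}\le 2-\delta$, i.e.\ the observation that the reflection directions are never tangent to $\partial\cD$. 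This is arguably more checkable and more portable than the paper's explicit computation; in fact for this example the cross term vanishes identically at the three corners, since the active maximisers there are $0$ or $1$ and the relevant off-diagonal entries of $P^u$ are then $0$. Two points you should make explicit for the gluing to be airtight: the supports of the arc charts must avoid the corners (an arc chart evaluated at a corner need not map $n_j$ into $-\R_+ e_j$), and the interior chart must vanish on $\partial\Dslice$ (otherwise the cone condition $H(y)\cC(y)\subset\cC_o(y)$ could fail on the boundary); both are arranged by shrinking the cover and are routine, and comparable in level of detail to what the paper itself leaves unchecked along its interpolated edges.
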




\begin{proof}
We first observe that uniqueness follows once again from Proposition \ref{prop unicite couts generaux}.\\
\noindent 1. We start by constructing $H$ on the boundary of $\cD$. Recalling Lemma \ref{lem-general}, it is enough to construct it on its intersection with $\Dslice$ which is
 made up of $3$ vertices
 $$y^1=(1,0,0), \quad y^2=(0,1,0), \quad y^3=(0,-1,-1)$$
 and three edges that are smooth curves. We denote $\mathcal{E}_1$ (respectively $\mathcal{E}_2$ and $\mathcal{E}_3$) the curve between $y^1$ and $y^2$ (respectively between $y^2$ and $y^3$ and between $y^3$ and $y^1$). Let us construct $H(y^1)$ and $H(y^2)$: we must have
 $$H(y^1) \left(\begin{array}{cc}
1 & 1\\
0 & -1\\
-1 & 0
\end{array} \right) = \left(\begin{array}{cc}
0 & 0\\
0 & -b\\
-a & 0
\end{array} \right), \quad 
 H(y^2)\left(\begin{array}{cc}
-1 & 0\\
1 & 1\\
0 & -1
\end{array} \right) = \left(\begin{array}{cc}
-c & 0\\
0 & 0\\
0 & -d
\end{array} \right),$$
 with $a,b,c,d>0$. Let us set $a=b=c=d=1$. Then we can take 
 $$H(y^1)=\left(\begin{array}{ccc}
1 & 1 & 1\\
1 & 2 & 1\\
1 & 1 & 2
\end{array} \right), \quad  H(y^2)=\left(\begin{array}{ccc}
2 & 1 & 1\\
1 & 1 & 1\\
1 & 1 & 2
\end{array} \right).$$
We define now $H$ on $\mathcal{E}_1$. We denote $(x_s)_{s \in [0,1]}$ a continuous parametrization of $\mathcal{E}_1$ such that $x_0=y^1$ and $x_1=y^2$. For all $s \in [0,1]$, we also denote $R_s$ the matrix that send the standard basis on a local basis at point $x_s$ with the standard orientation and such that: the two first vectors are in the plane $\{z=0\}$, the first one is orthogonal to $\mathcal{E}_1$ while the second one is tangent to $\mathcal{E}_1$ and the third one is $e_3$. We have in particular, $Q_0=\text{Id}$. Then we just have to set
$$H(x_s)=R_s [s H(y^1) +(1-s) R_1^{-1} H(y^2) R_1]R_s^{-1}.$$
We can check that, by construction, Assumption \ref{assumptionMarkov}-v) and \eqref{relation-cones} are fulfilled for points on $\mathcal{E}_1$.
Moreover, we are able to construct by the same method $H$ on $y^3$, and then on $\mathcal{E}_2$ and $\mathcal{E}_3$, satisfying Assumption \ref{assumptionMarkov}-v) and \eqref{relation-cones}. \\
\noindent 2. By using Lemma \ref{lem-general} we can extend $H$ on all the boundary of $\cD$. Finally, we can extend $H$ by continuity on the whole space $\mathbb{R}^3$ by following Remark 2.1 in \cite{chassagneux2018obliquely}.
 \eproof
\end{proof}
\subsection{The non-Markovian framework}
\label{sub se non markov}
We now switch to the non-Markovian case, which is more challenging. We prove the well-posedness of the RBSDE in the uncontrolled setting for two cases: Problems in dimension $3$ and the example of a symmetric transition matrix $P$, in any dimension.
\\
We first recall  Proposition 3.1 in \cite{chassagneux2018obliquely} that gives an existence result for non-Markovian obliquely reflected BSDEs and the corresponding assumptions, see  Assumption \ref{assumptionNonMarkov} below. Let us remark that the non-Markovian case is more challenging for our approach as it requires more structure condition on $H$, which must be symmetric and smooth in this case.

\begin{Assumption} \label{assumptionNonMarkov}There exists $L>0$ such that
  \begin{enumerate}[i)]
   \item $\xi:=g((X_t)_{t \in [0,T]})$ with $g : C([0,T],\mathbb{R}^q) \rightarrow \bar{\cD}$ a bounded uniformly continuous function and $X$ solution of the SDE \eqref{eq SDE} where $(b,\sigma) : [0,T] \times \R^q \to \R^q \times \R^{q \times \kappa}$ is a measurable function satisfying, for all $(t,x,y) \in [0,T] \times \R^q \times \R^q$,
    \begin{align*}
      |\sigma(t,x)| &\le L,\\
      |b(t,x)-b(t,y)|+|\sigma(t,x)-\sigma(t,y)|&\le L|x-y|.
      \end{align*}
   \item $f: \Omega \times [0,T] \times \mathbb{R}^d \times \mathbb{R}^{d \times \kappa} \rightarrow \mathbb{R}^d$ is a $\mathcal{P} \otimes \mathcal{B}(\mathbb{R}^d \times \mathbb{R}^{d \times \kappa})$-measurable function such that,  for all $(t,y,y',z,z') \in [0,T] \times \mathbb{R}^d \times \mathbb{R}^d \times \mathbb{R}^{d \times \kappa}\times \mathbb{R}^{d \times \kappa}$,
 \begin{align*} 
 |f(t,y,z) - f(t,y',z')| &\leqslant L \left(|y-y'| + |z-z'|\right)\;.
 \end{align*}
 Moreover we have 
 \begin{align*}
   \esssup_{\omega \in \Omega, t \in [0,T]} \mathbb{E}\left[ \int_t^T |f(s,0,0)|^2 \ud s \Big| \mathcal{F}_t\right] \leqslant L.
 \end{align*}
  \item $H:  \R^d \rightarrow \mathbb{R}^{d\times d}$ is valued in the set of symmetric matrices $Q$ satisfying 
 \begin{align}\label{eq bound matrix H}
 |Q| \le L\;,\quad L |\upsilon|^2 \ge  \upsilon^\top Q \upsilon \ge \frac1L |\upsilon|^2\,,\;\forall \upsilon \in \R^d.
\end{align}
 $ H$ is a $\cC^{1}$-function  and $H^{-1}$ is a  $\cC^{2}$ function satisfying
 \begin{align*}
 |\partial_y H| + |H^{-1}| +  |\partial_y H^{-1}| + |\partial^2_{yy} H^{-1}| \le L.
 \end{align*}
 \end{enumerate}
\end{Assumption}

\noindent From this assumption, follows the following general existence result in the non-Markovian setting.

\begin{Theorem}[\cite{chassagneux2018obliquely}, Proposition 3.1] \label{thm-existence-nonmarkov}
  We assume that $\cD$ has non-empty interior. Under Assumption \ref{assumptionNonMarkov}, there exists a solution $(Y,Z,\Psi) \in \mathbb S^2_d(\F^0) \times \H^2_{d \times \kappa}(\F^0) \times \H^2_d(\F^0)$ of the following system
  \begin{align}
    \hspace{-1cm} \label{orbsde-gen-nm-1} Y_s &= \xi + \int_s^T f(u,Y_u,Z_u) \ud u - \int_s^T Z_u \ud W_u - \int_s^T H(Y_u) \Psi_u \ud u, s \in [0,T], \\
    \label{orbsde-gen-nm-2} Y_s &\in \cD, \mbox{ } \Psi_s \in \cC(Y_s),\mbox{ } 0 \le s \le T,\\
    \label{orbsde-gen-nm-3} \int_0^T &1_{\{Y_s \not \in \partial \cD\}} |\Psi_s| \ud s = 0.
  \end{align}
\end{Theorem}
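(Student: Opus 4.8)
\noindent The statement is recalled verbatim from \cite[Proposition~3.1]{chassagneux2018obliquely}, so the ``proof'' here is really a pointer to that reference; the plan below indicates the mechanism behind it and why the structural hypotheses on $H$ in Assumption \ref{assumptionNonMarkov}-iii) are the natural ones. The guiding idea I would follow is to straighten the \emph{oblique} reflection into a \emph{normal} one. Since $H$ is valued in the symmetric positive definite matrices and is $\mathcal C^1$ with a $\mathcal C^2$ inverse, a natural route is to construct a $\mathcal C^2$ diffeomorphism $\Phi$ on a neighbourhood of $\cD$ such that, at every $y\in\partial\cD$, the Jacobian $\nabla\Phi(y)$ maps the oblique reflection cone $H(y)\,\cC(y)$ onto the outward normal cone of the convex set $\Phi(\cD)$ at $\Phi(y)$. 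This is an overdetermined first order system for $\Phi$ whose solvability rests precisely on the symmetry of $H$ (a curl-free, Frobenius-type compatibility condition), while the two-sided bound $\frac1L|\upsilon|^2\le\upsilon^\top H(y)\upsilon\le L|\upsilon|^2$ and the derivative bounds in \eqref{eq bound matrix H} make $\Phi$ and $\Phi^{-1}$ globally Lipschitz with bounded first and second derivatives. Applying It\^o's formula to $\Phi(Y)$ then turns \eqref{orbsde-gen-nm-1}--\eqref{orbsde-gen-nm-3} into a BSDE reflected \emph{normally} in $\Phi(\cD)$, whose driver stays Lipschitz and square-integrable once the second order term in $Z Z^\top$ is absorbed into it.

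\noindent I would then solve this normally reflected BSDE by penalisation: let $Y^n$ solve the standard Lipschitz BSDE obtained by adding the drift $-n\,(Y^n_s-\mathfrak{P}_{\Phi(\cD)}(Y^n_s))$, which is well posed by classical theory. Two a priori estimates, uniform in $n$, are required: a bound on $(Y^n,Z^n)$ in $\mathbb S^2_d(\F^0)\times\H^2_{d\times\kappa}(\F^0)$, obtained from It\^o's formula together with the Lipschitz and integrability hypotheses of Assumption \ref{assumptionNonMarkov}-i) and ii); and a bound on $\mathbb{E}[|K^n_T|^2]$ where $K^n_s:=-\int_0^s n\,(Y^n_u-\mathfrak{P}_{\Phi(\cD)}(Y^n_u))\,\ud u$ is the penalisation term. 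The second estimate is exactly where the assumption that $\cD$ --- hence $\Phi(\cD)$ --- has non-empty interior enters: fixing $y^0$ in the interior, the vector $Y^n_u-\mathfrak{P}(Y^n_u)$ is a positive multiple of the outward normal at $\mathfrak{P}(Y^n_u)$, so $(Y^n_u-\mathfrak{P}(Y^n_u))^\top(\mathfrak{P}(Y^n_u)-y^0)\le 0$ with a strictly negative margin whenever $Y^n_u$ leaves $\Phi(\cD)$, and testing the dynamics of $Y^n-y^0$ against $\ud K^n$ controls the total variation of $K^n$ uniformly in $n$.

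\noindent Finally I would pass to the limit: the uniform estimates give weak compactness of $(Z^n,\ud K^n)$ and, by the usual monotonicity and oscillation argument for penalised reflections in a fixed convex domain, strong convergence of $Y^n$ in $\mathbb S^2_d(\F^0)$; the limit solves the normally reflected system and the penalisation structure forces the Skorokhod condition $\int_0^T 1_{\{Y_s\notin\partial\Phi(\cD)\}}\,\ud|K|_s=0$. Undoing the change of variables, $\Phi^{-1}(Y)$ together with $\Psi$ recovered from $\ud K$ via $\nabla\Phi^{-1}$ solves \eqref{orbsde-gen-nm-1}--\eqref{orbsde-gen-nm-3}. I expect the main obstacle --- and the reason the non-Markovian case is harder than the Markovian one --- to be the construction of the straightening diffeomorphism $\Phi$ with the stated global regularity: symmetry of $H$ is indispensable there, since an asymmetric oblique reflection cannot in general be flattened by a change of variable, and this is precisely the ingredient imported from \cite{chassagneux2018obliquely}.
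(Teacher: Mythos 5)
You have correctly identified that the paper offers no proof of this statement at all: it is recalled verbatim from \cite{chassagneux2018obliquely} (Proposition 3.1), and the only ``proof'' the paper supplies is the citation itself, so deferring to that reference is exactly what the authors do. Your additional two paragraphs are therefore a reconstruction of the reference's argument rather than something that can be checked against this paper; as a reconstruction it is plausible but not quite what that reference does. In \cite{chassagneux2018obliquely} the non-Markovian existence is obtained by penalising the obliquely reflected equation directly --- the penalisation drift is of the form $-nH(Y^n_s)\bigl(Y^n_s-\mathfrak{P}(Y^n_s)\bigr)$ --- and the symmetry of $H$ together with the $\cC^2$ regularity of $H^{-1}$ is used to run It\^o's formula on $H^{-1}$-weighted quadratic functionals of $Y^n-\mathfrak{P}(Y^n)$ to get the uniform bound on the reflection term; there is no preliminary flattening diffeomorphism $\Phi$ turning the oblique reflection into a normal one (such a global straightening is generally unavailable even for symmetric $H$). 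Your second and third paragraphs (penalisation, the interior-point test against $y^0$, weak compactness and passage to the limit) do capture the right skeleton. Since the paper itself contains no proof to compare against, the only actionable point is that your opening ``straightening'' step misattributes the mechanism of the cited result, but this does not affect the correctness of invoking the reference, which is all the paper does.
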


\begin{Remark}
\begin{enumerate}[i)]
 \item \textcolor{black}{The assumption on the terminal condition is slightly less general than the one needed in \cite{chassagneux2018obliquely} (see Assumption \textbf{SB}(i) and Corollary 2.2 in \cite{chassagneux2018obliquely}). One could get a more general result by assuming that $\mathbb{E}[\xi|\mathcal{F}_.]$ is a BMO martingale such that its bracket has sufficiently large exponential moment.}
 \item  We do not use Theorem 3.1 in \cite{chassagneux2018obliquely} since the domain $\cD$ is not smooth enough to apply it (see Assumption \textbf{SB}(iv) in \cite{chassagneux2018obliquely}). Consequently, we have to assume the extra assumption that $\xi$ is bounded. 
 \item The uniqueness result for this part is obtain also by invoking  Corollary \ref{co uniqueness RBSDE}.
\end{enumerate}
\end{Remark}

\subsubsection{Existence of solutions in dimension $3$}

We focus in this part on the uncontrolled case $\mathscr C = \{0\}$, in dimension $d=3$. Thus, there is a unique transition matrix given by
\begin{align}
  P:=P^0 =\left(\begin{array}{ccc}
          0&p&1-p\\
          q&0&1-q\\
          r&1-r&0\\
        \end{array}\right),
\end{align}
for some $p,q,r \in [0,1]$.

\begin{Theorem}
\label{th existence non markov dim3}
 Let us assume that $0<p,q,r<1$ and that $\cD$ has non-empty interior. Then there exists a function $H :\R^3\to \R^{3 \times 3}$ that satisfies Assumption \ref{assumptionNonMarkov}(iii) and such that
\begin{align} \label{relation-cones-2}
  H(y) v \in \cC_o(y), \quad \forall y \in \cD,\, v \in \cC(y). 
\end{align}
 Consequently, if we assume that Assumption \ref{assumptionNonMarkov}(i)-(ii) is fulfilled, then there exists a solution to the Obliquely Reflected BSDE \eqref{orbsde}-\eqref{orbsde2}-\eqref{orbsde3}.  Moreover this solution is unique if we assume also Assumption \ref{hyp sup section 2}-ii).
\end{Theorem}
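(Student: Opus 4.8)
The statement is the dimension-3 non-Markovian analogue of Theorem \ref{constr H}: we must build an $H:\R^3\to\R^{3\times 3}$ which is symmetric, uniformly elliptic, $\mathcal C^1$ with $\mathcal C^2$ inverse and bounded derivatives (Assumption \ref{assumptionNonMarkov}(iii)), and which still maps the normal cone $\cC(y)$ into the reflection cone $\cC_o(y)$ at every boundary point. The strategy is to reuse the geometry already developed: by Lemma \ref{lem-general} it suffices to construct $H$ on $\Dslice$, which by Proposition \ref{pr convex hull} is a triangle with vertices $-\theta_{\cdot,j}$, $j=1,2,3$, and (smooth) curved edges; then extend along the invariant direction $\sum e_i$ and off $\cD$ by composing with the projection $\mathfrak P$. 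First I would record, exactly as in Step 1 of the proof of Theorem \ref{constr H}, the shape of $\cC(y)$ on $\Dslice$: it is $\{0\}$ in the interior of the triangle, a single ray $\R_+ n_j$ on the (relative interior of the) edge opposite vertex $-\theta_{\cdot,j}$, and the two-dimensional cone $\R_+ n_j + \R_+ n_k$ at the vertex $-\theta_{\cdot,i}$ (where $\{i,j,k\}=\{1,2,3\}$), with $n_\ell=(-Q_{\ell,m})_{1\le m\le 3}$; and $\cC_o(y)=-\sum_{\ell:\,y_\ell\text{ active}}\R_+ e_\ell$ accordingly.

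**Construction at the vertices.** At each vertex $y^i$ (using the notation of Theorem \ref{constr H}, $y^i=(C_{d,i}-C_{j,i})_{1\le j\le 3}$) exactly two constraints are active, so $\cC(y^i)=\R_+ n_j+\R_+ n_k$ is two-dimensional and we need $H(y^i)n_j=-\rho_j e_j$, $H(y^i)n_k=-\rho_k e_k$ for some $\rho_j,\rho_k>0$, with $H(y^i)$ symmetric positive definite. Since $n_j,n_k$ span the plane $\{\sum_m v_m=0\}$ and we are free to prescribe $H(y^i)$ on the complementary line $\R\sum_m e_m$, this is a finite linear-algebra problem: I would exhibit, as in the dimension-3 computation following Theorem \ref{constr H} (and in the proof of Theorem \ref{th existence  markov ex controled}), explicit symmetric positive-definite matrices doing the job — for instance of the form $\mathrm{Id}+e_je_j^\top+e_ke_k^\top$ after the appropriate basis change — and verify $v^\top H(y^i)v>0$ for $0\ne v\in\cC(y^i)$ directly (this is where, in the Markovian uncontrolled case, one invoked copositivity of $Q^{(i,i)}$; in dimension $3$ that copositivity is automatic, cf. Remark following Theorem \ref{constr H}, so no extra hypothesis is needed here beyond $0<p,q,r<1$).

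**Interpolating along the edges.** The new difficulty relative to Theorem \ref{constr H} is the $\mathcal C^1$/$\mathcal C^2$ regularity: we may no longer glue by plain convex combinations of the vertex matrices, because on an edge the direction of reflection must stay $-e_\ell$ for the single active index $\ell$, i.e. $H(y)n_\ell\in\R_+(-e_\ell)$ along the whole curved edge, and simultaneously $H$ must match the two endpoint vertex values smoothly. I would do this exactly as in the proof of Theorem \ref{th existence  markov ex controled}: parametrise each edge $\mathcal E_m$ by $s\mapsto x_s$, let $R_s\in SO(3)$ be a smooth frame field adapted to the edge (first vector normal to $\mathcal E_m$ in $\{v_3=0\}$, second tangent to $\mathcal E_m$, third $e_3$ — or the analogous choice for the other edges), and set $H(x_s)=R_s\big[s\,\widetilde H^{(1)}+(1-s)\,\widetilde H^{(2)}\big]R_s^{-1}$ where $\widetilde H^{(1)},\widetilde H^{(2)}$ are the endpoint vertex matrices expressed in the frame $R_1$, resp. $R_0=\mathrm{Id}$; convexity of the cone of symmetric positive-definite matrices keeps ellipticity, the conjugation by $R_s$ preserves symmetry, and the frame being adapted guarantees the reflection-cone condition along the edge. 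Smoothness of the edges (which holds since the domain is a polytope image under the explicit affine map of Proposition \ref{pr convex hull}, and in the genuinely curved controlled example is arranged by the parametrisation) gives the $\mathcal C^1$ regularity of $H$ and, since $H$ stays uniformly positive definite, $H^{-1}$ is $\mathcal C^2$ with bounded derivatives on the compact set $\Dslice$. Then extend: $H(x):=H(x-x_3\sum_i e_i)$ for $x\in\cD$ (legitimate by Lemma \ref{lem-general}(1)--(4), since $\cC$ is translation-invariant along $\sum e_i$), and $H(x):=H(\mathfrak P(x))$ off $\cD$, smoothing near $\partial\cD$ as in Remark 2.1 of \cite{chassagneux2018obliquely} so that the global $H$ is $\mathcal C^1$ with the required bounds.

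**Conclusion and the main obstacle.** Once $H$ satisfying Assumption \ref{assumptionNonMarkov}(iii) and \eqref{relation-cones-2} is in hand, the existence of $(Y,Z,\Psi)$ solving \eqref{orbsde-gen-nm-1}--\eqref{orbsde-gen-nm-3} follows from Theorem \ref{thm-existence-nonmarkov} (using that $\cD$ has non-empty interior, assumed), and setting $K_s:=-\int_0^s H(Y_u)\Psi_u\,\ud u$ yields a solution of \eqref{orbsde}-\eqref{orbsde2}-\eqref{orbsde3}; uniqueness under Assumption \ref{hyp sup section 2}-ii) is immediate from Proposition \ref{prop unicite couts generaux}. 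The real work — and the only place anything can go wrong — is the edge interpolation: one must simultaneously keep $H$ symmetric positive definite, $\mathcal C^1$ across the vertex-to-edge junctions with $\mathcal C^2$ bounded inverse, \emph{and} pointing the reflection in the single admissible coordinate direction along each curved edge; the frame-conjugation trick is exactly what reconciles these three requirements, and checking its compatibility at the three vertices (where two edges meet and the cone jumps to dimension two) is the delicate bookkeeping step.
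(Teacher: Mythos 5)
Your proposal follows the same overall architecture as the paper's proof: explicit symmetric positive definite matrices at the three vertices of $\Dslice$, interpolation along the edges, extension to $\R^3$ via the translation invariance of Lemma \ref{lem-general} and a projection, and then the conclusion via Theorem \ref{thm-existence-nonmarkov} and Proposition \ref{prop unicite couts generaux}. Two points differ. First, for the edge interpolation you import the rotating-frame conjugation $H(x_s)=R_s[\,sH^{(1)}+(1-s)H^{(2)}\,]R_s^{-1}$ from the controlled example (Theorem \ref{th existence  markov ex controled}); this is unnecessary here. In the uncontrolled case $\Dslice$ is a genuine triangle with \emph{straight} edges (Proposition \ref{pr convex hull}), on the relative interior of an edge only one constraint is active with a \emph{fixed} normal $n_\ell$, and the condition $H(y)n_\ell\in\R_+(-e_\ell)$ is linear in $H$ and satisfied at both endpoints; hence plain convex combination of the vertex matrices already preserves symmetry, positive definiteness and the cone condition, which is exactly what the paper does. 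Your frame trick degenerates to this when $R_s\equiv\mathrm{Id}$, so it is correct but adds bookkeeping (and the compatibility check at the vertices) that the problem does not require. Second, and this is the one place your write-up needs repair: extending by $H(\mathfrak P(x))$ with $\mathfrak P$ the projection onto $\cD$ and then ``smoothing near $\partial\cD$'' does not deliver Assumption \ref{assumptionNonMarkov}(iii), because the projection onto the non-smooth convex set $\cD$ is only Lipschitz, and mollifying in a neighbourhood of $\partial\cD$ risks destroying the cone condition on $\partial\cD$ itself. The paper avoids both problems by first checking that the linearly interpolated $H$ remains uniformly positive definite on a bounded convex $\mathcal C^2$ neighbourhood $\mathcal V$ of $\cD$, projecting onto $\overline{\mathcal V}$ outside it, and mollifying only near $\partial\mathcal V$, which is kept at positive distance from $\cD\cap\{z=0\}$. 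With that correction your argument is complete.
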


\begin{proof}
 Once again we exhibit a convenient $H$. Thanks to Lemma \ref{lem-general}, it is enough to construct $H$ only on $\mathbb{R}^3 \cap \{(x,y,z) \in \mathbb{R}^3| z = 0\}$. we start by $\Dslice$ which is
 a triangle with three vertices $v^i = (x_i,y_i,z_i), i = 1,2,3$ given by:
\begin{align}
  x_1 &= p y_1 + (1-p) z_1 - c_1, x_2 = p y_2 + (1-p) z_2 - c_1, y_3 = q x_3 + (1-q) z_3 - c_2, \\
  y_1 &= q x_1 + (1-q) z_1 - c_2, z_2 = r x_2 + (1-r) y_2 - c_3, z_3 = r x_3 + (1-r) y_3 - c_3,\\
  z_1 &=0, z_2=0, z_3=0.
\end{align}
We first observe that uniqueness follows once again from Proposition \ref{prop unicite couts generaux}.
Let us now construct $H$ on each vertex. We consider first the point $v^1$. It is easy to compute its outward normal cone, which is given by
\begin{align}
  \cC(v^1) = \R^+ (-1, p, 1-p)^\top + \R^+ (q, -1, 1-q)^\top.
\end{align}
The matrix $H(v^1)$ must satisfy
\begin{align} \label{identity}
  H(v^1)\left(\begin{array}{cc}
           -1 & q \\
           p & -1 \\
           1-p & 1-q \\
         \end{array}\right) = \left(\begin{array}{cc}
                                     -a & 0 \\
                                     0 & -b \\
                                     0 & 0
                                    \end{array}\right)
\end{align}
for some $a,b > 0$.
Taking $a=\frac 1 q, b= \frac 1 p$, we consider,
for any $\alpha > 0$, 
\begin{align}
  H(v^1) &= -\left(\begin{array}{cc}
                   1 & 0 \\
                   0 & 1 \\
                   0 & 0
                 \end{array}\right)
                       \left(\begin{array}{cc}
                               -q & pq \\
                               pq & -p
                             \end{array}\right)^{-1}
                                   \left(\begin{array}{ccc}
                                           1 & 0 & 0 \\
                                           0 & 1 & 0 \\
                                           0 & 0 & 0
                                         \end{array}\right) + \left(\begin{array}{ccc}
                                                                      \alpha & \alpha & \alpha \\
                                                                      \alpha & \alpha & \alpha \\
                                                                      \alpha & \alpha & \alpha
                                                                    \end{array}\right)
                                                                      \\
         &= \frac{1}{pq(1-pq)}\left(\begin{array}{ccc}
                                      \alpha + p & \alpha + pq & \alpha\\
                                      \alpha + pq & \alpha + q & \alpha \\
                                      \alpha & \alpha & \alpha
                                    \end{array}\right).
\end{align}
It is easy to check that this matrix $H(v^1)$ is symmetric and positive definite for any $\alpha > 0$, so we can set $\alpha=1$ in the following. 
Similarly, we construct $H$ on vertices $v^2, v^3$,
\begin{align}
  H(v^2) &= \frac{1}{r(1-p)(1-r(1-p))}\left(\begin{array}{ccc}
                                      1 + (1-p) & 1 & 1 + r(1-p)\\
                                      1 & 1 & 1 \\
                                      1 + r(1-p) & 1 & 1 + r
                                         \end{array}\right), \\
  \nonumber H(v^3) &= \frac{1}{(1-q)(1-r)(1-(1-q)(1-r))}\left(\begin{array}{ccc}
                                      1 & 1 & 1 \\
                                      1 & 1 + (1-q) & 1 + (1-q)(1-r) \\
                                      1 & 1 + (1-q)(1-r) & 1 + (1-r)
                                         \end{array}\right).
\end{align}
We can extend $H$ on all $\Dslice$ by convex combination, i.e. linear interpolation. By this way, $H$ stays valued in the set of positive definite symmetric matrices and is smooth enough. We could also define $H$ outside $\cD \cap \{(x,y,z) \in \mathbb{R}^3| z = 0\}$ by linear interpolation but we will lose the boundedness and the positivity of $H$. Nevertheless we can find a bounded and convex, $\mathcal{C}^2$ open neighborhood $\mathcal{V}$ of $\cD$, small enough, such that $H$ (still defined by linear interpolation) stays valued in the set of positive definite symmetric matrices on $\overline{\mathcal{V}}$. Then we define $H(y)$ for $y \notin \overline{\mathcal{V}}$ by $H(\mathcal{P}(y))$ where $\mathcal{P}$ stands for the projection onto $\overline{\mathcal{V}}$. By this way, $H$ is a bounded function with values in the set of positive definite symmetric matrices, that satisfies \eqref{eq bound matrix H}, \eqref{relation-cones-2} and that is $\mathcal{C}^{0}(\mathbb{R}^2) \cap \mathcal{C}^{2}
(\mathbb{R}^2 \setminus \partial \mathcal{V})$ smooth, with $\partial \mathcal{V}$ the boundary of $\mathcal{V}$. Finally, we just have to mollify $H$ in a neighborhood of $\partial \mathcal{V}$, small enough to stay outside $\cD \cap \{z = 0\}$.
 \eproof
\end{proof}

\begin{Remark}
 When $pqr(1-p)(1-q)(1-r)=0$ then we can show that it is not possible to construct a function $H$ that satisfies Assumption \ref{assumptionNonMarkov}(iii) and \eqref{relation-cones-2}.
\end{Remark}

\subsubsection{Existence of solutions for a symmetric multidimensional example}
\label{section exemple symetrique}

We focus in this part on the uncontrolled case $\mathscr C = \{0\}$, in dimension $d \geqslant 3$ with a unique transition matrix $P$ given by
\begin{equation*}
P_{i,j} = \frac{1}{d-1} \1_{i \neq j}.
\end{equation*}

\begin{Theorem}
\label{th existence non markov example}
Assume that $\cD$ has non-empty interior. There exists a function $H :\R^d\to \R^{d \times d}$ that satisfies Assumption \ref{assumptionNonMarkov}(iii) and such that
\begin{align*} 
  H(y) v \in \cC_o(y), \quad \forall y \in \cD,\, v \in \cC(y). 
\end{align*}
 Consequently, if we assume that Assumption \ref{assumptionNonMarkov}(i)-(ii) is fulfilled, then there exists a solution to the Obliquely Reflected BSDE \eqref{orbsde}-\eqref{orbsde2}-\eqref{orbsde3}.  Moreover this solution is unique if we assume also Assumption \ref{hyp sup section 2}-ii).
\end{Theorem}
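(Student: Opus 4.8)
The plan is to follow the scheme of the proof of Theorem~\ref{th existence non markov dim3}. Uniqueness under Assumption~\ref{hyp sup section 2}-ii) is provided by Proposition~\ref{prop unicite couts generaux}, and once an $H$ satisfying Assumption~\ref{assumptionNonMarkov}(iii) and \eqref{relation-cones-2} is available, Theorem~\ref{thm-existence-nonmarkov} produces a solution of \eqref{orbsde-gen-nm-1}--\eqref{orbsde-gen-nm-3}, which becomes a solution of \eqref{orbsde}--\eqref{orbsde2}--\eqref{orbsde3} by setting $K_s := -\int_0^s H(Y_u)\Psi_u \ud u$ as explained in Section~\ref{sub se markov}. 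So everything reduces to constructing a symmetric, positive-definite $H : \R^d \to \R^{d\times d}$ with those two properties. Here $P$ is symmetric and doubly stochastic, hence $\mu = \tfrac1d\1$ and $Q = I_d - P = \tfrac{1}{d-1}(dI_d - \1\1^\top) = \tfrac{d}{d-1}\Pi$, where $\Pi := I_d - \tfrac1d\1\1^\top$ is the orthogonal projection onto $\1^\perp$; by Theorem~\ref{th set of cns} the standing assumption that $\cD$ has non-empty interior means $\mu\bar c > 0$. By Proposition~\ref{pr convex hull}, $\Dslice$ is the $(d-1)$-simplex with vertices $y^i := -\theta_{\cdot,i}$, $1\le i\le d$. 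As in Step~1 of the proof of Theorem~\ref{constr H}, at a point $y = \sum_{i\in\cE_y}\lambda_i y^i$ (with $\lambda_i > 0$, $\sum_i\lambda_i = 1$, $\cE_y := \{i : \lambda_i > 0\}$) the saturated constraints are exactly those indexed by $k\notin\cE_y$, and, writing $n_k := -Qe_k$ (which coincides with the vectors $n_k$ of Theorem~\ref{constr H} since $Q$ is symmetric),
\[ \cC(y) = \sum_{k\notin\cE_y}\R_+ n_k, \qquad \cC_o(y) = -\sum_{k\notin\cE_y}\R_+ e_k. \]

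For the vertex construction I would take $H(y^i)$ to be the symmetric matrix whose $(i,i)$-entry and whose off-diagonal entries in the $i$-th row and column all equal $1$, whose other diagonal entries equal $3$, and whose remaining entries equal $2$. Isolating the $i$-th coordinate,
\[ H(y^i) = \begin{pmatrix} 1 & \1_{d-1}^\top \\ \1_{d-1} & I_{d-1} + 2\,\1_{d-1}\1_{d-1}^\top \end{pmatrix}, \]
whose Schur complement with respect to the scalar block equals $I_{d-1} + \1_{d-1}\1_{d-1}^\top \succ 0$; hence $H(y^i)$ is symmetric positive definite. A short computation using the formula for $Q$ gives $H(y^i)Q = \tfrac{d}{d-1}(I_d - \1 e_i^\top)$, whence $H(y^i)n_k = -H(y^i)Qe_k = -\tfrac{d}{d-1}e_k \in \cC_o(y^i)$ for all $k\neq i$. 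The essential point is that the proportionality constant $\tfrac{d}{d-1}$ is the same at every vertex. Hence, extending $H$ over $\Dslice$ by barycentric interpolation, $H(y) := \sum_{i\in\cE_y}\lambda_i H(y^i)$, for $v = \sum_{k\notin\cE_y}\alpha_k n_k \in \cC(y)$ with $\alpha_k \ge 0$ we obtain $H(y)v = -\tfrac{d}{d-1}\sum_{k\notin\cE_y}\alpha_k e_k \in \cC_o(y)$, i.e. \eqref{relation-cones-2}. Moreover $H$ is an affine function of $y$ on $\Dslice$ with values in the convex cone of symmetric positive-definite matrices, and its range is a compact subset of that cone; this gives all the quantitative requirements of Assumption~\ref{assumptionNonMarkov}(iii) on $\Dslice$ ($H$ and $H^{-1}$ bounded, $H$ uniformly elliptic, $H\in\cC^1$ and $H^{-1}\in\cC^2$ with bounded derivatives).

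Extending $H$ off $\Dslice$ is done exactly as at the end of the proof of Theorem~\ref{th existence non markov dim3}: extend $H$ first to a bounded convex $\cC^2$ open neighbourhood $\mathcal V$ of $\Dslice$ in $\{y\in\R^d : y_d = 0\}$ by the same affine formula, taken small enough that it stays positive definite on $\overline{\mathcal V}$; set $H(x) := H(\mathcal P(x))$ for $x\in\{y_d=0\}\setminus\overline{\mathcal V}$ with $\mathcal P$ the projection onto $\overline{\mathcal V}$; mollify $H$ in a thin shell about $\partial\mathcal V$ kept disjoint from $\Dslice$ to recover smoothness; and finally put $H(x) := H(x - x_d\1)$ for every $x\in\R^d$, which by Lemma~\ref{lem-general} is consistent and preserves \eqref{relation-cones-2} on $\cD$, the set of saturated constraints being invariant under translation along $\1$. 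This $H$ satisfies Assumption~\ref{assumptionNonMarkov}(iii) and \eqref{relation-cones-2}, which proves the first claim; the existence and, under Assumption~\ref{hyp sup section 2}-ii), uniqueness of a solution to \eqref{orbsde}--\eqref{orbsde2}--\eqref{orbsde3} then follow as recalled above.

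The genuine difficulty is the vertex construction, because Assumption~\ref{assumptionNonMarkov}(iii) forces $H$ to be symmetric: already in dimension $d = 4$ a symmetric matrix realizing the required reflection-cone map need not exist for a general irreducible $P$, as the $4\times4$ example in the Remark after Theorem~\ref{constr H} shows. One really exploits the rigid structure $Q = \tfrac{d}{d-1}\Pi$ of this symmetric model: it is what makes the explicit block matrix simultaneously positive definite and compatible with the reflection cone at each vertex, and --- which is exactly what the interpolation step requires --- with one and the same proportionality constant $\tfrac{d}{d-1}$ at all vertices. Everything else is a short linear-algebra verification and the standard smoothing procedure from \cite{chassagneux2018obliquely}.
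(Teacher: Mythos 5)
Your proposal is correct and follows essentially the same route as the paper: construct a symmetric positive definite $H(y^i)$ at each vertex of $\Dslice$ sending every normal $n_k$ ($k\neq i$) to a fixed negative multiple of $e_k$, extend by barycentric interpolation over $\Dslice$ (which preserves the cone relation precisely because the multiple is the same at every vertex), and then extend to $\R^d$ by the neighbourhood/projection/mollification procedure of the three-dimensional case together with translation invariance along $\1$. Your explicit vertex matrix differs from the paper's only by a positive scaling and a multiple of $\1\1^\top$ (which acts trivially on the normal cone since $\1^\top Q=0$), so the two constructions are interchangeable.
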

\begin{proof}
The proof follows exactly the same lines as the proof of Theorem \ref{th existence non markov dim3}.
 $\Dslice$ is a convex polytope with $d$ vertices $(y^i)_{1 \leqslant i \leqslant d}$ satisfying: for all $1 \leqslant i \leqslant d$,
 \begin{align*}
  y^i_{\ell} = \sum_{j \neq i }\frac{1}{d-1} y^j_{\ell} - \bar{c}_i, \quad \forall i \neq \ell, \quad \text{and} \quad y^i_d=0. 
 \end{align*}
Let us construct $H$ on vertex $y^d$. It is easy to compute its outward normal cone, which is positively generated by vectors $f^1,...,f^{d-1}$ where
$$f^k_i = -\1_{i=k} +\frac{1}{d-1} \1_{i \neq k}.$$
For any $1 \leqslant k \leqslant d-1$, we impose $H(y^d)f^k = -\alpha_k e_k$ with $\alpha_k>0$. We can check that it is true with $\alpha_k=1$ for all $1 \leqslant k \leqslant d-1$, if we set, for any $a>0$,
\begin{equation*}
H(y^d)=\left(\begin{array}{cccc}
a &  & a-\frac{d-1}{d} & a-2\frac{d-1}{d}\\
 & \ddots &  & \vdots\\
a-\frac{d-1}{d} &  & a & \vdots\\
a-2\frac{d-1}{d} & \ldots & \ldots & a-2\frac{d-1}{d}
\end{array}\right) .
\end{equation*}
Since $\frac{d-1}{d}$ is an eigenvalue of $H(y^d)$ with multiplicity $d-2$, $\Det(H(y^d))=\left(a-2\frac{d-1}{d}\right) (d-1) \left(\frac{d-1}{d}\right)^{d-2}$ and $\Trace(H(y^d)) =da-2\frac{d-1}{d}$, $H(y^d)$ is a positive definite symmetric matrix as soon as $a>2\frac{d-1}{d}$. Thus we can set $a=2$. By simple permutations of rows and columns in $H(y^d)$ we can construct easily $H(y^k)$ for any $1 \leqslant k \leqslant d$. Then we just have to follow the proof of Theorem \ref{th existence non markov dim3} to extend $H$ from vertices of $\Dslice$ to the whole space. 
 \eproof
\end{proof}


\appendix
\section{Appendix}

\subsection{Proof of Lemma \ref{le costly round-trip}}\label{proof le costly round-trip} 

\noindent For all $\cI, \cJ \subset \{1,\dots,d\}^2$, let $\mathrm{ad}[Q^{(\cI,\cJ)}]$  be the adjunct matrix of $Q^{(\cI,\cJ)}$. 
\\
For $1\le j \le d$, we denote, for ease of presentation, $\mathfrak{Q}^j:=\mathrm{ad}[Q^{(j,j)}] $, and we have
\begin{align} \label{eq de adjunct}
\mathfrak{Q}^j_{\ls{i}{j} ,\ls{\ell}{j} } = (-1)^{\ls{i}{j} + \ls{\ell}{j}} \det Q^{(\{j,\ell\},\{j,i\} )}.
\end{align}
for all $(\ell,i) \in \{1,\dots,d\} \backslash \{j\}$.
For all $1 \le i \neq j \le d$, we define 
\begin{align*}
C_{i,j} := \left((Q^{(j,j)})^{-1} \bar{c}^{(j)}\right)_{i-\1_{\set{i>j}} }  \text{ and }  C_{j,j} = 0. 
\end{align*}
Using $\mathfrak{Q}^j$ the adjunct matrix of $Q^{(j,j)}$, we observe then, for latter use,
\begin{align}\label{eq expression Cij}
C_{i,j} = \frac1{\mu_j} \sum_{\ell\neq j} \mathfrak{Q}^j_{\ls{i}{j},\ls{\ell}{j}}\bar{c}_{\ell}\;, \text{ for } i \neq j\;.
\end{align}

\proof
1. We first show that \eqref{le costly round-trip} holds true.
From \eqref{eq de expected cost}, we observe that
\begin{align*}
\bar{C}_{j,j} &= \esp{\sum_{n=0}^{\tau_j - 1} \bar{c}_{X_n} | X_0 = j}
\\
&= \esp{\sum_{n=0}^{\tau_j - 1} \sum_{\ell =1}^d \bar{c}_{\ell} \1_{\set{X_n=\ell}} | X_0 = j}.
\end{align*}
Thus,
\begin{align*}
\bar{C}_{j,j} = \sum_{\ell=1}^d  \bar{c}_{\ell} \gamma^j_\ell \; \text{ with }  \; \gamma^j_\ell  =\esp{\sum_{n=0}^{\tau_j-1} \1_{\set{X_n=\ell}} | X_0 = j}
\end{align*}
From \cite[Theorem 1.7.5]{norris1998markov}, we know that $\gamma^j_\ell = \frac{\mu_\ell}{\mu_j}$.
\\
\\
\vspace{2mm}
2. We prove \eqref{eq Cij+Cji linked to mu c} assuming the following for the moment: for all distinct $1 \le i,j,k \le d$,
\begin{align}\label{eq kind of magic}
\mu_i  \mathfrak{Q}^j_{i^{(j)}, k^{(j)}} + \mu_j \mathfrak{Q}^i_{j^{(i)}, k^{(i)}}  = \mathfrak{Q}^i_{j^{(i)}, j^{(i)}}\mu_k =  \mathfrak{Q}^j_{i^{(j)}, i^{(j)}} \mu_k.
\end{align}
Let $1 \le i \neq j \le d$. We have, using \eqref{eq expression Cij},
    \begin{align*}
      \nonumber C^{i,j} + C^{j,i} 
      \nonumber  &= \frac{1}{\mu_j} \left(\mathfrak{Q}^j \bar{c}^{(j)}\right)_{i^{(j)}} + \frac{1}{\mu_i} \left(\mathfrak{Q}^i \bar{c}^{(i)}\right)_{j^{(i)}} \\
      \nonumber &= \frac{1}{\mu_j} \sum_{k \neq j} \mathfrak{Q}^j_{i^{(j)},k^{(j)}} \bar{c}_k + \frac{1}{\mu_i} \sum_{k \neq i} \mathfrak{Q}^i_{j^{(i)},k^{(i)}} \bar{c}_k \\
      &= \frac{1}{\mu_j} \mathfrak{Q}^j_{i^{(j)},i^{(j)}} \bar{c}_i + \frac{1}{\mu_i} \mathfrak{Q}^i_{j^{(i)},j^{(i)}} \bar{c}_j + \sum_{k \neq i,j} \frac{\mu_i \mathfrak{Q}^j_{i^{(j)},k^{(j)}} + \mu_j \mathfrak{Q}^i_{j^{(i)},k^{(i)}}}{\mu_i \mu_j} \bar{c}_k.
    \end{align*}
    Using the previous point and the fact that $\mathfrak{Q}^j_{i^{(j)},i^{(j)}} = \mathfrak{Q}^i_{j^{(i)},j^{(i)}}$, we get
    \begin{align}
      \nonumber C^{i,j} + C^{j,i} &= \mathfrak{Q}^j_{i^{(j)},i^{(j)}} \left( \frac{\mu_i \bar{c}_i + \mu_j \bar{c}_j}{\mu_i \mu_j} + \sum_{k \neq i,j} \frac{\mu_k \bar{c}_k}{\mu_i \mu_j} \right) = \frac{\mathfrak{Q}^j_{i^{(j)},i^{(j)}}}{\mu_i \mu_j} \mu \bar{c}
    \end{align}
which is the result we wanted to prove.\\
3. We now prove \eqref{eq kind of magic}.\\
 Let $i,j \in \set{1,\dots, d}$ and $i \neq j$.
We observe first, using \eqref{eq de adjunct}, that
  \begin{align*}
  \mathfrak{Q}^j_{\ls{i}{j} ,\ls{i}{j} } = \det Q^{(\{j,i\},\{j,i\} )} =   \mathfrak{Q}^i_{\ls{j}{i} ,\ls{j}{i} } 
  \end{align*}
 For $k \in \set{1,\dots,d}\setminus \set{i,j}$, we denote by $k_{ij} \in \set{1,\dots,d-2}$ (resp. $i_{jk}$, $j_{ik}$) the index such that:
  \begin{align}
  Q_{k,\cdot} = Q^{(\{j,i\},\{j,i\} )}_{k_{ij}, \cdot}
  \,(\text{resp. }
   Q_{i,\cdot} = Q^{(\{j,k\},\{j,i\} )}_{i_{jk}, \cdot}\,,
    Q_{j,\cdot} = Q^{(\{k,i\},\{j,i\} )}_{j_{ik}, \cdot}
  )
  \, , 
  \label{eq de kij}
  \end{align}
  namely
  $$k_{ij} = k - \1_{\set{k>i}} - \1_{\set{k>j}}, \, i_{jk} = i - \1_{\set{i>k}} - \1_{\set{i>j}}
  \text{ and }
  j_{ik}= j - \1_{\set{j>k}} - \1_{\set{j>i}}.
  $$
  Let $\sigma_k$ be the permutation of $ \set{1,\dots,d-2}$ given by
  \begin{align*}
 \left(  \begin{array}{ccccccc}
 2 & \dots &k_{ij} &1 & k_{ij}+1 & \dots & d-2
  \\
 1 & \dots &k_{ij}-1 & k_{ij} &k_{ij}+1 &\dots &d-2
  \end{array} 
  \right)
 \end{align*}
 which is the composition of $k_{ij}-1$ transpositions.
 Applying $\sigma_k^{-1}$ to the row of $Q^{(\{j,i\},\{j,i\} )}$, we obtain a matrix denoted simply
 $Q^{(\{j,i\},\{j,i\} )}_{\sigma_k(\cdot),\cdot}$ whose first row is $Q_{k,\cdot}^{(\{j,i\},\{j,i\} )}$, and we have
 \begin{align*}
 \det Q^{(\{j,i\},\{j,i\} )} = (-1)^{k_{ij}-1} \det Q^{(\{j,i\},\{j,i\} )}_{\sigma_k(\cdot),\cdot}
 \end{align*}
  Since $\mu Q = 0$, we have $Q_{k,\cdot} = - \sum_{\ell \neq k}\frac{\mu_\ell}{\mu_k}Q_{\ell,\cdot}$ and then,
  \begin{align*}
   \det Q^{(\{j,i\},\{j,i\} )}_{\sigma_k(\cdot),\cdot}
  = - \sum_{\ell \neq k} \frac{\mu_\ell}{\mu_k}
   \left | \begin{array}{c}
   Q_{\ell,\cdot}
   \vspace{2pt}
   \\
   Q^{(\{j,i\},\{j,i\} )}_{\sigma_k(2),\cdot}
   \\
   \vdots
   \\
   Q^{(\{j,i\},\{j,i\} )}_{\sigma_k(d-2),\cdot}
   \end{array}
   \right |
   = - \frac{\mu_i}{\mu_k}
     \left | \begin{array}{c}
   Q_{i,\cdot}
   \vspace{2pt}
   \\
   Q^{(\{j,i\},\{j,i\} )}_{\sigma_k(2),\cdot}
   \\
   \vdots
   \\
   Q^{(\{j,i\},\{j,i\} )}_{\sigma_k(d-2),\cdot}
   \end{array}
   \right |
   - \frac{\mu_j}{\mu_k}
 \left | \begin{array}{c}
   Q_{j,\cdot}
   \vspace{2pt}
   \\
   Q^{(\{j,i\},\{j,i\} )}_{\sigma_k(2),\cdot}
   \\
   \vdots
   \\
   Q^{(\{j,i\},\{j,i\} )}_{\sigma_k(d-2),\cdot}
   \end{array}
   \right |.
  \end{align*}
  Let $\sigma_i$ (resp. $\sigma_j$)be constructed as $\sigma_k$ but with $i_{jk}$ (resp. $j_{ik}$) instead of $k_{ji}$ then
  one observes
  \begin{align*}
    \det Q^{(\{j,i\},\{j,i\} )}_{\sigma_k(\cdot),\cdot}
  &=  - \frac{\mu_i}{\mu_k}
  \det Q^{(\{j,k\},\{j,i\} )}_{\sigma_i(\cdot),\cdot}
   - \frac{\mu_j}{\mu_k}
    \det Q^{(\{i,k\},\{j,i\} )}_{\sigma_j(\cdot),\cdot}
    \\
    &=
     - \frac{\mu_i}{\mu_k} (-1)^{i_{jk}-1}
  \det Q^{(\{j,k\},\{j,i\} )}
   - \frac{\mu_j}{\mu_k} (-1)^{j_{ik}-1}
    \det Q^{(\{i,k\},\{j,i\} )}
  \end{align*}
  We compute that
  \begin{align*}
   (-1)^{i_{jk}-1+k_{ij}-1+i^{(j)}+k^{(j)}}=-1
  \text{ and }
   (-1)^{j_{ik}-1+k_{ij}-1+j^{(i)}+k^{(i)}}=-1,
  \end{align*}
%
  leading to
  \begin{align*}
  \mu_k \mathfrak{Q}^j_{\ls{i}{j} ,\ls{i}{j} }
  = \mu_{i}   \mathfrak{Q}^j_{\ls{i}{j} ,\ls{k}{j} }
  +
  \mu_{j}   \mathfrak{Q}^i_{\ls{j}{i} ,\ls{k}{i} }.
  \end{align*}

\eproof

\subsection{Enlargement of a filtration along a sequence of increasing stopping times} \label{mrt}
We fix a strategy $\phi \in \Phi$ and we study filtrations $\F^i, i \ge 0$ and $\F^\infty$ which are constructed in subsection \eqref{problem}.\\
For each $n \ge 0$, we define a new filtration $\G^n = (\cG^n_t)_{t \ge 0}$ by the relations $\cG^0_t = \cF^0_t$ and for $n \ge 1$, $\cG^n_t = \cF^0_t \vee \sigma(X_i, i \le n) = \cG^{n-1}_t \vee \sigma(X_n)$. 

\subsubsection{Representation Theorems}

The goal of this section is to derive Integral Representation Theorems for filtrations $\F^i, i \ge 0$ and $\F$.\\
We first recall, see \cite{AJ17}:

\begin{Theorem}[Lévy]
  Let $(\Omega,\cF,\F,\P)$ a filtered probability space with $\F$ non necessarily right-continuous. Let $\xi \in \cF$ and $X$ a $\F-$supermartingale.
  \begin{enumerate}
  \item We have $\espcond{\xi}{\cF_t} \to \espcond{\xi}{\cF_\infty}$ a.s. and in $L^1$, as $t \to \infty$.
  \item If $t_n$ decreases to $t$, we have $X_{t_n} \to X_{t^+}$ a.s. and in $L^1$ as $n \to \infty$.
  \end{enumerate}
  In particular, if $X_t = \espcond{\xi}{\cF_t}$, we get that $\espcond{\xi}{\cF_{t_n}} \to \espcond{\xi}{\cF_{t^+}}$ a.s. and in $L^1$ as $n \to \infty$, for $t_n$ decreasing to $t$.
\end{Theorem}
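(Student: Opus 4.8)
The plan is to recognize the statement as the conjunction of Lévy's forward martingale convergence theorem (item 1) and Doob's regularization theorem for supermartingales (item 2), and to prove both from the upcrossing and downcrossing inequalities together with a uniform integrability argument, being careful throughout that $\F$ is only assumed to be a filtration, not a right-continuous one, so that no c\`adl\`ag modification is available a priori. Implicitly one also assumes $\xi \in L^1(\cF)$, so that $\espcond{\xi}{\cdot}$ makes sense.

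\textbf{Item 1.} Set $M_t := \espcond{\xi}{\cF_t}$. First I would observe that $\{M_t\}_{t\ge0}$ is uniformly integrable, since any family of conditional expectations of a fixed $L^1$ variable is UI; in particular $\sup_t \esp{|M_t|} \le \esp{|\xi|} < \infty$. Restricting to rational times and applying Doob's upcrossing inequality to the martingale $(M_q)_{q\in\Q_+}$, I get, for $a<b$ and $N\in\N$, a bound of the form $\esp{U^N_{[a,b]}} \le (b-a)^{-1}(\esp{|\xi|}+|a|)$, so that almost surely $(M_q)_{q\in\Q_+}$ performs only finitely many upcrossings of every interval; combined with the $L^1$ maximal inequality $\P(\sup_{t\le N}|M_t|>\lambda)\le \lambda^{-1}\esp{|\xi|}$, this forces $M_\infty := \lim_{q\to\infty,\,q\in\Q} M_q$ to exist almost surely, and $M_\infty\in L^1$ by Fatou. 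Uniform integrability upgrades this to convergence in $L^1$, and also along all of $\R_+$, not just along $\Q$. To identify the limit, for $s\ge0$ and $A\in\cF_s$ I pass to the limit in $\esp{M_t\1_A}=\esp{\xi\1_A}$ (valid for $t\ge s$) to get $\esp{M_\infty\1_A}=\esp{\xi\1_A}$; since $\bigcup_s \cF_s$ is a $\pi$-system generating $\cF_\infty$ and $M_\infty$ is $\cF_\infty$-measurable (being an a.s. limit of $\cF_\infty$-measurable variables), the functional monotone class theorem yields $M_\infty=\espcond{\xi}{\cF_\infty}$ a.s.

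\textbf{Item 2.} Fix $t$ and a sequence $t_n\downarrow t$, and set $Y_n:=X_{t_n}$, $\cH_n:=\cF_{t_n}$. Then $(\cH_n)_n$ is decreasing and $(Y_n)_n$ is a reverse supermartingale: $\espcond{Y_n}{\cH_m}\le Y_m$ whenever $n\le m$. Taking expectations, $n\mapsto\esp{Y_n}$ is nondecreasing and bounded above by $\esp{X_t}<\infty$, hence convergent, and a standard lemma (the Doob decomposition of a reverse supermartingale, cf. Dellacherie--Meyer) then gives $\sup_n\esp{|Y_n|}<\infty$. Doob's downcrossing inequality bounds the expected number of downcrossings of each $[a,b]$ by $(b-a)^{-1}(|a|+\sup_n\esp{|Y_n|})<\infty$, so $(Y_n)$ converges a.s.; interlacing two sequences $t_n\downarrow t$ and $s_n\downarrow t$ inside a common decreasing sequence shows the limit does not depend on the chosen sequence, so I may denote it $X_{t^+}$. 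Finally, a reverse supermartingale bounded in $L^1$ is uniformly integrable, which promotes the a.s. convergence to $L^1$ convergence, i.e. $Y_n\to X_{t^+}$ a.s. and in $L^1$. The concluding ``in particular'' is then immediate: a UI martingale $X_t=\espcond{\xi}{\cF_t}$ is in particular a supermartingale, so item 2 applies to it verbatim and gives $\espcond{\xi}{\cF_{t_n}}\to \espcond{\xi}{\cF_{t^+}}$ a.s.\ and in $L^1$ for $t_n\downarrow t$.

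The main obstacle will be the two places where one genuinely has to work because $\F$ is only a raw filtration and the supermartingale is not assumed regular: establishing uniform integrability of the reverse supermartingale $(Y_n)$ in item 2 so as to obtain $L^1$ (not merely a.s.) convergence, and carrying out the $\pi$-system/monotone-class identification of the limit directly on $\cF_\infty=\bigvee_t\cF_t$ rather than on a completed right-continuous enlargement. The upcrossing and downcrossing estimates, the $L^1$ maximal inequality, and the interlacing argument are all routine.
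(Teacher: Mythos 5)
The paper does not actually prove this statement: it is recalled as a classical result with a pointer to the reference \cite{AJ17} and then used as a black box in the analysis of the enlarged filtrations $\F^n$ and $\F^\infty$. There is therefore no in-paper proof to compare yours against; the only question is whether your reconstruction of the classical argument is sound, and it essentially is. Item 1 is the standard upcrossing/uniform-integrability proof of L\'evy's upward theorem with the correct $\pi$-system identification of the limit on $\cF_\infty=\bigvee_t\cF_t$, and item 2 is the standard backward (reverse-time) supermartingale convergence theorem, where you correctly isolate the key facts: the monotone bounded sequence $n\mapsto\esp{Y_n}$ yields $L^1$-boundedness and then uniform integrability of the reverse supermartingale, the downcrossing inequality yields a.s.\ convergence, and the interlacing trick makes $X_{t^+}$ well defined independently of the sequence $t_n\downarrow t$.

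Two points deserve slightly more care than your sketch gives them, though neither is a fatal gap. First, you should make explicit the integrability hypothesis $\xi\in L^1$ (you flag it as implicit; in the paper's applications $\xi\in L^2$, so this is harmless, but the statement is vacuous for merely measurable $\xi$). Second, in item 1 the step ``and also along all of $\R_+$, not just along $\Q$'' is not automatic for the a.s.\ convergence: without right-continuity the path $t\mapsto M_t(\omega)$ is not controlled by its values on the rationals, and a supremum over an uncountable index set need not even be measurable. The clean repair is the approximation argument: pick $\xi_k$ that is $\cF_{s_k}$-measurable with $\esp{|\xi-\xi_k|}\le 4^{-k}$, write $M_t=\xi_k+\espcond{\xi-\xi_k}{\cF_t}$ for $t\ge s_k$, and control the tail via Doob's maximal inequality applied along any fixed sequence $t_m\to\infty$, so that the conclusion holds along every sequence (which is all that is used in the paper, e.g.\ in the right-continuity proofs for $\G^n$ and $\F^\infty$). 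With these provisos your architecture is the standard and correct one.
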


We now recall an important notion of coincidence of filtrations between two stopping times, introduced in \cite{AJ17}. This will be useful for our purpose in the sequel.\\
Let $S, T$ two random times, which are stopping times for two filtrations $\H^1 = (\cH^1_t)_{t \ge 0}$ and $\H^2 = (\cH^2_t)_{t \ge 0}$. We set
\begin{align*}
  \llbracket S, T \llbracket \, := \left\{ (\omega, s) \in \Omega \times \R_+ : S(\omega) \le s < T(\omega) \right\}.
\end{align*}
We say that $\H^1$ and $\H^2$ \emph{coincide} on $\llbracket S, T \llbracket$ if
\begin{enumerate}
\item for each $t \ge 0$ and each $\cH^1_t$-measurable variable $\xi$, there exists a $\cH^2_t$-measurable variable $\chi$ such that $\xi 1_{S \le t < T} = \chi 1_{S \le t < T}$,
\item for each $t \ge 0$ and each $\cH^2_t$-measurable variable $\chi$, there exists a $\cH^1_t$-measurable variable $\xi$ such that $\chi 1_{S \le t < T} = \xi 1_{S \le t < T}$.
\end{enumerate}

We now study the right-continuity of the filtration $\G^n$ for some $n \ge 0$. Using its specific structure, it is easy to compute conditional expectations. Lévy's theorem then allows to obtain the right-continuity.
\begin{Lemma}
  Let $n \ge 0$.
  \begin{enumerate} 
  \item If $\xi \in L^1(\cF^0_\infty)$ and $\xi' \in L^1(\sigma(X_i, 1 \le i \le n))$, then for $t \ge 0$, we have $\espcond{\xi\xi'}{\cG^n_t} = \espcond{\xi}{\cF^0_t}\xi'$.
  \item $\G^n$ is right-continuous.
  \end{enumerate}
\end{Lemma}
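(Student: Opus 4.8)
The plan is to exploit the very explicit structure of the filtration $\G^n$, namely that $\cG^n_t = \cF^0_t \vee \sigma(X_1,\dots,X_n)$, where the $\sigma$-algebra $\sigma(X_1,\dots,X_n)$ generated by the switching modes is a \emph{fixed} (time-independent) $\sigma$-algebra, independent of $W$ but not of $\cF^0$ in general; however, the random variables $X_i$ are discrete, which makes conditioning tractable. First I would prove part 1 by a monotone-class / approximation argument: it suffices to check the identity $\espcond{\xi\xi'}{\cG^n_t} = \espcond{\xi}{\cF^0_t}\,\xi'$ for $\xi' = 1_{\{(X_1,\dots,X_n) = (j_1,\dots,j_n)\}}$, an indicator of an atom of $\sigma(X_1,\dots,X_n)$ (since these atoms form a finite partition, as each $X_i$ takes values in $\{1,\dots,d\}$). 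For such $\xi'$ one verifies the defining property of conditional expectation: the right-hand side $\espcond{\xi}{\cF^0_t}\xi'$ is $\cG^n_t$-measurable, and for any $A \in \cF^0_t$ and any atom $B$ of $\sigma(X_1,\dots,X_n)$ one checks $\esp{\xi\xi' 1_{A\cap B}} = \esp{\espcond{\xi}{\cF^0_t}\xi' 1_{A\cap B}}$; the key point making this work is that the $X_i$ are built from $W$ and the independent uniforms $\mathfrak U_i$, but more simply one uses the tower property over $\cF^0_t$ together with the fact that $1_{A}$ is $\cF^0_t$-measurable and $1_B 1_{B'} = 1_B \delta_{B,B'}$. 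Extending from indicators of atoms to general $\xi' \in L^1(\sigma(X_1,\dots,X_n))$ is linearity and a limiting argument, and extending in $\xi$ from bounded to $L^1$ is dominated convergence; sets of the form $A \cap B$ generate $\cG^n_t$, so the monotone class theorem closes the argument.

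Next I would deduce part 2, right-continuity of $\G^n$, from part 1 together with Lévy's theorem quoted just above. Fix $t \ge 0$ and let $t_m \downarrow t$. We must show $\cG^n_{t^+} := \bigcap_{m} \cG^n_{t_m}$ equals $\cG^n_t$ (up to null sets). It suffices to show that every bounded $\cG^n_{t^+}$-measurable random variable is $\cG^n_t$-measurable, and for this it is enough, by a monotone class argument, to treat variables of the form $\xi\xi'$ with $\xi \in L^\infty(\cF^0_\infty)$ and $\xi'$ an atom indicator of $\sigma(X_1,\dots,X_n)$ — but actually the cleanest route is: for any bounded $\cG^n_\infty$-measurable $\eta$ we have by Lévy's theorem $\espcond{\eta}{\cG^n_{t_m}} \to \espcond{\eta}{\cG^n_{t^+}}$ a.s. as $m \to \infty$. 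Take $\eta = \xi\xi'$ as above; by part 1, $\espcond{\xi\xi'}{\cG^n_{t_m}} = \espcond{\xi}{\cF^0_{t_m}}\xi'$, and since $\F^0$ is right-continuous, $\espcond{\xi}{\cF^0_{t_m}} \to \espcond{\xi}{\cF^0_t}$ a.s.; hence $\espcond{\xi\xi'}{\cG^n_{t^+}} = \espcond{\xi}{\cF^0_t}\xi'$, which is $\cG^n_t$-measurable. Since variables $\xi\xi'$ of this form generate $\cG^n_\infty$ in the monotone-class sense, $\espcond{\eta}{\cG^n_{t^+}}$ is $\cG^n_t$-measurable for every bounded $\cG^n_\infty$-measurable $\eta$, and taking $\eta$ itself $\cG^n_{t^+}$-measurable gives $\eta = \espcond{\eta}{\cG^n_{t^+}}$ is $\cG^n_t$-measurable, i.e. $\cG^n_{t^+} \subseteq \cG^n_t$; the reverse inclusion is trivial. (One should also note the augmentation by null sets is harmless since $\cF^0_t$ already contains all $\P$-null sets of $\cF^0_\infty$ and adjoining the finitely many atoms of $\sigma(X_1,\dots,X_n)$ preserves this.)

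The main obstacle, and the only genuinely delicate point, is the bookkeeping in part 1: one must be careful that $\sigma(X_1,\dots,X_n)$ is genuinely a finite $\sigma$-algebra (true, since each $X_k = \zeta_k$ takes finitely many values in $\{1,\dots,d\}$) and that the identity $\espcond{\xi\xi'}{\cG^n_t} = \espcond{\xi}{\cF^0_t}\xi'$ really does hold despite $\sigma(X_1,\dots,X_n)$ \emph{not} being independent of $\cF^0_t$ — what saves us is that it is independent after conditioning on $\cF^0_t$ is not needed; rather, the atoms $B$ of $\sigma(X_1,\dots,X_n)$ satisfy $\esp{\xi 1_A 1_B} = \esp{\espcond{\xi}{\cF^0_t} 1_A 1_B}$ for $A \in \cF^0_t$ because $1_A$ is $\cF^0_t$-measurable, so $\esp{\xi 1_A 1_B} = \esp{1_A\, \espcond{\xi 1_B}{\cF^0_t}}$, and one then needs $\espcond{\xi 1_B}{\cF^0_t} = \espcond{\xi}{\cF^0_t}\,\P(B\mid\cF^0_t)$ — which is false in general. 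The correct argument instead takes $\eta = \xi \xi'$ with $\xi'$ ranging over a generating class for which the identity holds by construction of the $X_i$, or, more robustly, one simply \emph{defines} the relevant conditional expectations directly using the independence of the sequence $(\mathfrak U_i)$ from $W$; I would present part 1 by reducing, via the structure $X_k = F(\alpha_k, X_{k-1}, \mathfrak U_k)$ with $\F^0$ carrying $W$ only, to the orthogonality of the uniforms — and I expect the referee-facing version to state this carefully, which is the one step deserving full detail rather than a one-line claim.
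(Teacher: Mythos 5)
There is a genuine gap, and it is the one you flag yourself in your closing paragraph: your argument for part 1 is built on the premise that $\sigma(X_1,\dots,X_n)$ is the finite $\sigma$-algebra generated by the switching modes $\zeta_1,\dots,\zeta_n$, and under that reading you correctly observe that the step you would need, namely $\espcond{\xi 1_B}{\cF^0_t} = \espcond{\xi}{\cF^0_t}\,\P(B\mid\cF^0_t)$ for atoms $B$, is false in general, because $\zeta_k = F(\alpha_k,\zeta_{k-1},\mathfrak U_k)$ depends on $\cF^0$ through the controls $\alpha_k$. You then gesture at ``reducing to the orthogonality of the uniforms'' without carrying it out, so part 1 is asserted rather than proved. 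The missing observation is that in this appendix $X_i$ stands for the extra noise $\mathfrak U_i$ itself; this is forced by the subsequent claim that $\G^n$ and $\F^n$ coincide on $\llbracket \tau_n,+\infty\llbracket$, where $\cF^n_t = \cF^{n-1}_t \vee \sigma(\mathfrak U_n \ind{\tau_n\le t})$. Consequently $\sigma(X_1,\dots,X_n)$ is independent of the whole augmented Brownian $\sigma$-field $\cF^0_\infty$, and part 1 becomes the paper's two-line computation: for $F\in\cF^0_t$ and $F'\in\sigma(X_i,\, i\le n)$, independence gives
\begin{align*}
\esp{\xi\xi'\, 1_{F\cap F'}} = \esp{\xi 1_F}\,\esp{\xi' 1_{F'}} = \esp{\espcond{\xi}{\cF^0_t} 1_F}\,\esp{\xi' 1_{F'}} = \esp{\espcond{\xi}{\cF^0_t}\,\xi'\, 1_{F\cap F'}},
\end{align*}
and since the sets $F\cap F'$ form a $\pi$-system generating $\cG^n_t$, a monotone class argument concludes. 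No decomposition into atoms and no finiteness of $\sigma(X_1,\dots,X_n)$ is needed (indeed the $\mathfrak U_i$ are uniform on $[0,1]$, so that $\sigma$-algebra is not finite, and your ``finite partition of atoms'' reduction would not even apply).

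Your part 2 is correct and coincides with the paper's argument (L\'evy's theorem applied along $t_m\downarrow t$, part 1, right-continuity of $\F^0$, then a monotone class extension from products $\xi\xi'$ to all bounded $\cG^n_\infty$-measurable variables), but as written it rests on part 1, which you have not established.
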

\begin{proof}
  \begin{enumerate}
  \item If $F \in \cF^0_t$ and $F' \in \sigma(X_i, 1 \le i \le n)$, we have, by independence,
    \begin{align*}
      \nonumber \esp{\xi\xi' 1_{F\cap F'}} &= \esp{\xi 1_F}\esp{\xi' 1_{F'}} \\
      \nonumber &= \esp{\espcond{\xi}{\cF^0_t}1_F}{\esp{\xi' 1_{F'}}} \\
                                           &= \esp{\xi'\espcond{\xi}{\cF^0_t} 1_{F \cap F'}}.
    \end{align*}
    Since $\{F \cap F'| F \in \cF^0_t, F' \in \sigma(X_i, 1 \le i \le n)\}$ is a $\pi$-system generating $\cG^n_t$, the result follows by a monotone class argument.
  \item Let $t \ge 0$ and $t_m$ decreasing to $t$. We have, using Lévy's Theorem, the previous point and the right-continuity of $\F^0$,
    \begin{align*}
      \nonumber \espcond{\xi\xi'}{\cG^n_{t^+}} &= \lim_m \espcond{\xi\xi'}{\cG^n_{t_m}} = \lim_m \xi'\espcond{\xi}{\cF^0_{t_m}} \\
      \nonumber &= \xi'\espcond{\xi}{\cF^0_t} = \espcond{\xi\xi'}{\cG^n_t}.
    \end{align*}
    By a monotone class argument, we have $\espcond{\xi}{\cG^n_{t^+}} = \espcond{\xi}{\cG^n_t}$ for all bounded $\cG^n_\infty$-measurable $\xi$, hence it follows the right-continuity of $\G^n$.
  \end{enumerate}
  \eproof
\end{proof}
Using the previous Lemma, we show how to compute conditional expectations in $\F$ and $\F^n$ for all $n \ge 0$, and show that these filtrations are right-continuous.
\begin{Proposition}
  \begin{enumerate}
  \item For all $m \ge n \ge 0$, $\F^{n}$, $\F^{m}$ and $\F^\infty$ coincide on $\llbracket 0, \tau_{n+1} \llbracket$.\\
    For all $n \ge 0$, $\F^{n}$ and $\G^{n}$ coincide on $\llbracket \tau_n, +\infty \llbracket$.
  \item For all $n \ge 0$ and $t \ge 0$, we have, for $\xi \in L^1(\cF^{n+1}_\infty)$:
    \begin{align} \label{dec-espcond}
      \espcond{\xi}{\cF^{n+1}_t} = \espcond{\xi}{\cF^n_t} 1_{t < \tau_{n+1}} + \espcond{\xi}{\cG^{n+1}_t} 1_{\tau_{n+1} \le t}.
    \end{align}
    Let $t \ge 0$ such that $\sum_{n=0}^{+\infty} \P(\tau_n \le t < \tau_{n+1}) = 1$. Then, for $\xi \in L^1(\cF^\infty_\infty)$,
    \begin{align*}
      \espcond{\xi}{\cF^\infty_t} = \sum_{n = 0}^{+\infty} \espcond{\xi}{\cF^n_t} 1_{\tau_n \le t < \tau_{n+1}}.
    \end{align*}
  \item For all $n \ge 0$, $\G^n$ is right-continuous.
  \item The filtration $\G$ is right-continuous on $[0,T]$.
  \end{enumerate}
\end{Proposition}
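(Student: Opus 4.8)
The plan is to establish the four assertions in order, with the coincidence statement of point~1 carrying essentially all the weight. First I would unwind the recursion to get $\cF^n_t=\cF^0_t\vee\sigma\big((\mathfrak{U}_k\ind{\tau_k\le t})_{1\le k\le n}\big)$, recall that $\cG^n_t=\cF^0_t\vee\sigma(X_1,\dots,X_n)$ with $X_k=\mathfrak{U}_k$, and record by an auxiliary induction on $k$ that $\cF^k_t\subseteq\cG^k_t$; since each $\tau_k$ is an $\F^{k-1}$-stopping time this gives $\{\tau_k\le t\}\in\cF^{k-1}_t\subseteq\cF^n_t\cap\cG^n_t$ for $k\le n$ and $\{\tau_{n+1}\le t\}\in\cF^n_t\cap\cG^{n+1}_t$. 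On the stochastic interval $\llbracket 0,\tau_{n+1}\llbracket$ every generator $\mathfrak{U}_k\ind{\tau_k\le t}$ with $k\ge n+1$ vanishes, so for every $C$ in the generating $\pi$-system of $\cF^m_t$ (sets $A\cap B$ with $A\in\cF^0_t$ and $B$ a Borel cylinder in the $\mathfrak{U}_k\ind{\tau_k\le t}$, $k\le m$) one has $C\cap\{t<\tau_{n+1}\}\in\cF^n_t$; the family of $C\in\cF^m_t$ with this property is a $\lambda$-system, hence all of $\cF^m_t$, and then all of $\cF^\infty_t=\bigvee_m\cF^m_t$ after one further monotone class step. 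Passing from sets to random variables yields the first coincidence (the reverse inclusion being trivial). The coincidence of $\F^n$ and $\G^n$ on $\llbracket\tau_n,+\infty\llbracket$ is obtained by the same scheme, using now that on $\{\tau_n\le t\}$ one has $\mathfrak{U}_k\ind{\tau_k\le t}=X_k$ for all $k\le n$, so that the two generating $\pi$-systems agree once intersected with $\{\tau_n\le t\}$.

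For point~2, I would check that the right-hand side of \eqref{dec-espcond} is a version of $\espcond{\xi}{\cF^{n+1}_t}$. It is $\cF^{n+1}_t$-measurable because $\ind{t<\tau_{n+1}}$ and $\espcond{\xi}{\cF^n_t}$ are $\cF^n_t$-measurable and, by point~1, $\espcond{\xi}{\cG^{n+1}_t}\ind{\tau_{n+1}\le t}$ agrees with an $\cF^{n+1}_t$-measurable variable; and for any $A\in\cF^{n+1}_t$, splitting $A=(A\cap\{t<\tau_{n+1}\})\cup(A\cap\{\tau_{n+1}\le t\})$ and using point~1 to replace $A$ on the first piece by a set of $\cF^n_t$ and on the second by a set of $\cG^{n+1}_t$, the identity $\esp{\xi\ind{A}}=\esp{(\mathrm{RHS})\ind{A}}$ falls out of the defining property of the two conditional expectations together with $\{t<\tau_{n+1}\}\in\cF^n_t$ and $\{\tau_{n+1}\le t\}\in\cG^{n+1}_t$. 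The infinite formula is proved in the same way: $\{\tau_n\le t<\tau_{n+1}\}\subseteq\llbracket 0,\tau_{n+1}\llbracket$, so on it $\F^\infty$ and $\F^n$ coincide and every summand $\espcond{\xi}{\cF^n_t}\ind{\tau_n\le t<\tau_{n+1}}$ is $\cF^\infty_t$-measurable, while testing against $A\in\cF^\infty_t$ uses $\sum_n\ind{\tau_n\le t<\tau_{n+1}}=1$ a.s.\ to interchange sum and expectation (the partial sums being dominated by $|\xi|\in L^1$).

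Point~3 is exactly the second assertion of the Lemma preceding the Proposition. For point~4, I would first prove by induction that each $\F^n$ is right-continuous: $\F^0$ is the augmented Brownian filtration, and assuming $\F^n$ right-continuous, for a bounded $\cF^\infty_\infty$-measurable $\xi$ and $t_m\downarrow t$, formula \eqref{dec-espcond} at each $t_m$ --- together with the inclusions $\{t<\tau_{n+1}\}\subseteq\{t_m<\tau_{n+1}\}$ for $m$ large and $\{\tau_{n+1}\le t\}\subseteq\{\tau_{n+1}\le t_m\}$, and Lévy's theorem applied to $\F^n$ and to $\G^{n+1}$ (right-continuous by point~3) --- gives $\espcond{\xi}{\cF^{n+1}_{t^+}}=\espcond{\xi}{\cF^n_t}\ind{t<\tau_{n+1}}+\espcond{\xi}{\cG^{n+1}_t}\ind{\tau_{n+1}\le t}=\espcond{\xi}{\cF^{n+1}_t}$, hence right-continuity of $\F^{n+1}$ using completeness of the filtration. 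Then, for $t\in[0,T]$ with $\sum_n\P(\tau_n\le t<\tau_{n+1})=1$, the infinite formula of point~2 and the same limiting argument (on $\{\tau_n\le t<\tau_{n+1}\}$ one has $\espcond{\xi}{\cF^\infty_{t_m}}=\espcond{\xi}{\cF^n_{t_m}}\to\espcond{\xi}{\cF^n_t}$ for $m$ large) give $\espcond{\xi}{\cF^\infty_{t^+}}=\espcond{\xi}{\cF^\infty_t}$, that is, right-continuity on $[0,T]$ of the filtration of point~4.

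The main difficulty lies in point~1: isolating the auxiliary induction $\cF^k_t\subseteq\cG^k_t$, checking that all the stopping-time events belong to the correct (smaller) $\sigma$-algebras, and running the $\pi$--$\lambda$ and monotone class passages cleanly. Once point~1 is in place, points~2--4 are essentially formal, the only delicate step being the almost-sure convergence of the indicators $\ind{\tau_n\le t_m<\tau_{n+1}}$ needed in point~4.
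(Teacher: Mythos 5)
Your proof is correct and follows essentially the same route as the paper's: point 1 rests on the observation that the generators $\mathfrak{U}_k\ind{\tau_k\le t}$ with $k\ge n+1$ vanish on $\{t<\tau_{n+1}\}$ (you run this through a $\pi$--$\lambda$ argument on sets where the paper uses the functional representation $\chi=f(\tilde\chi,X_1\ind{\tau_1\le t},\dots)$, an equivalent implementation), point 2 on the coincidence plus the tower/defining property, and points 3--4 on Lévy's theorem, an induction on $n$, and the pointwise-in-$\omega$ finiteness of the sum over $n$, exactly as in the paper. You correctly note that the literal statement of point 3 ($\G^n$ right-continuous) is already the preceding Lemma, and you supply the induction showing each $\F^n$ is right-continuous inside point 4, which is where the paper actually places that content.
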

\begin{proof}
  \begin{enumerate}
  \item Let $t \ge 0$ be fixed.   If $\xi$ is $\cF^n_t$-measurable, since $\cF^n_t \subset \cF^m_t \subset \cF^\infty_t$ for $m \ge n$, taking $\chi = \xi$ gives a $\cF^m_t$-measurable (resp. $\cF^\infty_t$-measurable) random variable such that $\xi 1_{t < \tau_{n+1}} = \chi 1_{t < \tau_{n+1}}$.\\
    Conversely, if $\chi$ is a $\cF^m_t$-measurable random variable, then
    \begin{align*}
      \chi = f(\tilde \chi, X_1 1_{\tau_1 \le t}, \dots, X_m 1_{\tau_m \le t}),
    \end{align*}
    for a measurable $f$ and a $\cF^0_t$-measurable variable $\tilde \chi$. Since $X_k 1_{\tau_k \le t} = 0$ on $\{t < \tau_{n+1}\}$ when $k \ge n$, one gets:
    \begin{align*}
      \nonumber \chi 1_{t < \tau_{n+1}} &= f(\tilde \chi, X_1 1_{\tau_1 \le t}, \dots, X_n 1_{\tau_n \le t}, 0, \dots, 0) 1_{t < \tau_{n+1}} \\
                              &=: \xi 1_{t < \tau_{n+1}},
    \end{align*}
    where $\xi$ is $\cF^n_t$-measurable.\\
    Last, let $\chi$ be a $\cF^\infty_t$-measurable variable. Then $\chi = f(\tilde \chi, X_{i_1} 1_{\tau_{i_1} \le t}, \dots, X_{i_N} 1_{\tau_{i_N} \le t})$ for some random $N \ge 0$ and $1 \le i_1 \le \dots \le i_N$, and the same arguments applies.

    The proof of the second claim is straightforward as one remarks that for $t \ge 0$ and $n \ge 1$, the equality $f(\xi, X_1, \dots, X_n) 1_{\tau_n \le t} = f(\xi, X_1 1_{\tau_1 \le t}, \dots, X_n 1_{\tau_n \le t}) 1_{\tau_n \le t}$ holds, since the random times $\tau_i, i \ge 0$ are nondecreasing.
  \item Let $n \ge 0$ and $\xi \in L^1(\cF^{n+1}_\infty)$.\\
    Since $\F^n$ and $\F^{n+1}$ coincide on $\llbracket 0, \tau_{n+1} \llbracket$, we have $\espcond{\xi}{\cF^{n+1}_t} 1_{t < \tau_{n+1}} = \tilde \xi 1_{t < \tau_{n+1}}$ for a $\cF^n_t$-measurable variable $\tilde \xi$. In particular, the left hand side is also $\cF^n_t$-measurable. Hence $\espcond{\xi}{\cF^{n+1}_t} 1_{t < \tau_{n+1}} = \espcond{\espcond{\xi}{\cF^{n+1}_t} 1_{t < \tau_{n+1}}}{\cF^n_t} = \espcond{\xi}{\cF^n_t} 1_{t < \tau_{n+1}}$.\\
    Similarly, since $\F^{n+1}$ and $\G^{n+1}$ coincide on $\llbracket \tau_{n+1}, +\infty \llbracket$, we have $\espcond{\xi}{\cG^{n+1}_t}1_{\tau_{n+1} \le t} = \hat \xi 1_{\tau_{n+1} \le t}$ for a $\cF^{n+1}_t$-measurable variable $\hat \xi$. In particular, the left hand side is $\cF^{n+1}_t$-measurable. Hence $\espcond{\xi}{\cG^{n+1}_t}1_{\tau_{n+1} \le t} = \espcond{\espcond{\xi}{\cG^{n+1}_t}1_{\tau_{n+1} \le t}}{\cF^{n+1}_t} = \espcond{\xi}{\cF^{n+1}_t} 1_{\tau_{n+1} \le t}$.\\
    Let $t \ge 0$ such that $\sum_n \P(\tau_n \le t < \tau_{n+1}) = 1$. We have, since $\G$ and $\G^n$ coincide on $\llbracket 0, \tau_{n+1} \llbracket$, using the same arguments as before,
    \begin{align*}
      \nonumber \espcond{\xi}{\cG_t} &= \sum_n \espcond{\xi}{\cG_t} 1_{\tau_n \le t < \tau_{n+1}} = \sum_n \espcond{\xi}{\cG^n_t} 1_{\tau_n \le t < \tau_{n+1}}.
    \end{align*}
  \item We prove by induction that $\F^n$ is right-continuous. Since $\F^0$ is the augmented Brownian filtration, the result is true for $n = 0$.\\
    Assume now that $\F^{n-1}, n \ge 1,$ is right-continuous. Let $t \ge 0, \xi \in L^1(\cF^n_\infty)$ and $(t_m)_{m \ge 0}$ such that $t_m \ge t_{m+1}$ and $\lim_m t_m = t$. We have, using the previous point and the right-continuity of $\F^{n-1}$ and $\G^n$:
    \begin{align*}
      \nonumber \espcond{\xi}{\cF^n_{t^+}} &= \lim_m \espcond{\xi}{\cF^n_{t_m}} \\
      \nonumber &= \lim_m \espcond{\xi}{\cF^n_{t_m}} 1_{t_m < \tau_n} + \espcond{\xi}{\cF^n_{t_m}} 1_{\tau_n \le t_m} \\
      \nonumber &= \lim_m \espcond{\xi}{\cF^{n-1}_{t_m}} 1_{t_m < \tau_n} + \espcond{\xi}{\cG^n_{t_m}} 1_{\tau_n \le t_m} \\
      \nonumber &= \espcond{\xi}{\cF^{n-1}_t} 1_{t < \tau_n} + \espcond{\xi}{\cG_{t_m}} 1_{\tau_n \le t} \\
                                           &= \espcond{\xi}{\cF^n_t}.
    \end{align*}
  \item Let $t < T, \xi \in L^1(\cF^\infty_\infty), (t_m)_{m \ge 0}$ such that $T > t_m > t_{m+1}$ and $\lim_m t_m = t$. We have, by Lévy's Theorem and the first point,
    \begin{align*}
      \nonumber \espcond{\xi}{\cF^\infty_{t^+}} &= \lim_m \espcond{\xi}{\cF^\infty_{t_m}} 
= \lim_m \sum_{n=0}^{+\infty} \espcond{\xi}{\cF^\infty_{t_m}} 1_{\tau_n \le t_m < \tau_{n+1}} \\
                                                &= \lim_m \sum_{n=0}^{+\infty} \espcond{\xi}{\cF^n_{t_m}} 1_{\tau_n \le t_m < \tau_{n+1}}.
    \end{align*}
    Fix $\omega \in \Omega$. We have that $t_m < \hat T < \tau_{N+1}(\omega),$ hence
    \begin{align*}
      \nonumber \espcond{\xi}{\cF^\infty_{t^+}}\!\!(\omega) &= \lim_m \sum_{n=0}^{+\infty} \espcond{\xi}{\cF^n_{t_m}}\!\!(\omega) 1_{\tau_n(\omega) \le t_m < \tau_{n+1}(\omega)} \\
      \nonumber &= \lim_m \sum_{n=0}^{N(\omega)+1} \espcond{\xi}{\cF^n_{t_m}}\!\!(\omega) 1_{\tau_n(\omega) \le t_m < \tau_{n+1}(\omega)} \\
      \nonumber &= \sum_{n=0}^{N(\omega)+1} \lim_m \espcond{\xi}{\cF^n_{t_m}}\!\!(\omega) 1_{\tau_n(\omega) \le t_m < \tau_{n+1}(\omega)} \\
                                       &= \sum_{n=0}^{+\infty} \lim_m \espcond{\xi}{\cF^n_{t_m}}\!\!(\omega) 1_{\tau_n(\omega) \le t_m < \tau_{n+1}(\omega)}.
    \end{align*}
    Finally using the right-continuity of each $\F^n$, we get
    \begin{align*}
      \nonumber \espcond{\xi}{\cF^\infty_{t^+}} &= \sum_{n=0}^{+\infty} \lim_m \espcond{\xi}{\cF^n_{t_m}} 1_{\tau_n \le t_m < \tau_{n+1}} = \sum_{n=0}^{+\infty} \espcond{\xi}{\cF^n_t} 1_{\tau_n \le t < \tau_{n+1}} = \espcond{\xi}{\cF^\infty_t},
    \end{align*}
    which proves that $\F^\infty$ is right-continuous on $[0,T]$.
  \end{enumerate}
  \eproof
\end{proof}

\begin{Lemma}
  Let $n \ge 0$ and $\xi \in L^1(\cF^n_\infty)$. Let $\sigma$ be a $\F^n-$stopping time. We have:
  \begin{align}
    \espcond{\xi}{\cF^{n+1}_\sigma} = \espcond{\xi}{\cF^n_\sigma}.
  \end{align}
\end{Lemma}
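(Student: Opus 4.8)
\emph{Overall strategy.} The plan is to first prove the identity for deterministic times, and then transfer it to the $\F^n$-stopping time $\sigma$ by a routine right-continuity argument. The only thing that genuinely happens between $\F^n$ and $\F^{n+1}$ is that the independent mark $\mathfrak{U}_{n+1}$ gets revealed after $\tau_{n+1}$, and since $\xi$ does not depend on $\mathfrak{U}_{n+1}$ this addition is invisible for the conditional expectation of $\xi$.

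\emph{Step 1 (independence).} First I would record that $\mathfrak{U}_{n+1}$ is independent of $\cF^n_\infty$. Indeed, an easy induction on $k\le n$ gives $\cF^k_\infty\subseteq \cF^0_\infty\vee\sigma(\mathfrak{U}_1,\dots,\mathfrak{U}_k)$, because $\tau_{k+1}$ is an $\F^k$-stopping time and $\mathfrak{U}_{k+1}\ind{\tau_{k+1}\le t}$ is measurable with respect to $\cF^k_t\vee\sigma(\mathfrak{U}_{k+1})$; and $\mathfrak{U}_{n+1}$ is, by assumption, independent of $\cF^0_\infty=\sigma(W)$ and of $\mathfrak{U}_1,\dots,\mathfrak{U}_n$. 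A standard computation (the extra $\sigma$-field being independent of $\sigma(\xi)\vee\cF^n_t$) then yields, for every $\xi\in L^1(\cF^n_\infty)$ and every $t$,
\begin{align*}
\espcond{\xi}{\cF^n_t\vee\sigma(\mathfrak{U}_{n+1})}=\espcond{\xi}{\cF^n_t}.
\end{align*}

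\emph{Step 2 (deterministic times).} Fix $t\ge 0$ and $\xi\in L^1(\cF^n_\infty)$. Since $\tau_{n+1}$ is an $\F^n$-stopping time, $\ind{\tau_{n+1}\le t}$ is $\cF^n_t$-measurable, so $\mathfrak{U}_{n+1}\ind{\tau_{n+1}\le t}$ is $\big(\cF^n_t\vee\sigma(\mathfrak{U}_{n+1})\big)$-measurable, whence the sandwich $\cF^n_t\subseteq\cF^{n+1}_t\subseteq\cF^n_t\vee\sigma(\mathfrak{U}_{n+1})$. Conditioning the identity of Step 1 on $\cF^{n+1}_t$ and using that $\espcond{\xi}{\cF^n_t}$ is already $\cF^{n+1}_t$-measurable,
\begin{align*}
\espcond{\xi}{\cF^{n+1}_t}=\espcond{\espcond{\xi}{\cF^n_t\vee\sigma(\mathfrak{U}_{n+1})}}{\cF^{n+1}_t}=\espcond{\espcond{\xi}{\cF^n_t}}{\cF^{n+1}_t}=\espcond{\xi}{\cF^n_t}.
\end{align*}
(One may also read this off directly from \eqref{dec-espcond}, noting that on $\{\tau_{n+1}\le t\}$ the field $\cG^{n+1}_t$ adds to $\cF^n_t$ only $\mathfrak{U}_{n+1}$, which is independent of $\cF^n_\infty$.)

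\emph{Step 3 (from deterministic times to $\sigma$).} The maps $t\mapsto\espcond{\xi}{\cF^n_t}$ and $t\mapsto\espcond{\xi}{\cF^{n+1}_t}$ are uniformly integrable martingales for $\F^n$ and $\F^{n+1}$ respectively, both filtrations being right-continuous by the previous Proposition; by Step 2 their càdlàg versions agree at every deterministic time, hence are indistinguishable. As $\sigma$ is an $\F^n$- (thus $\F^{n+1}$-) stopping time, optional sampling for uniformly integrable martingales gives $\espcond{\xi}{\cF^{n+1}_\sigma}=\espcond{\xi}{\cF^n_\sigma}$. If one prefers to avoid optional sampling, approximate $\sigma$ from above by the dyadic times $\sigma_k:=2^{-k}\lceil 2^k\sigma\rceil$: on each atom $\{\sigma_k=j2^{-k}\}\in\cF^n_{j2^{-k}}$, Step 2 combined with $\espcond{\xi}{\cF^m_{\sigma_k}}\ind{\sigma_k=j2^{-k}}=\espcond{\xi}{\cF^m_{j2^{-k}}}\ind{\sigma_k=j2^{-k}}$ gives $\espcond{\xi}{\cF^{n+1}_{\sigma_k}}=\espcond{\xi}{\cF^n_{\sigma_k}}$, and letting $k\to\infty$ one concludes by Lévy's theorem together with $\bigcap_k\cF^m_{\sigma_k}=\cF^m_{\sigma^+}=\cF^m_\sigma$. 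The only points requiring care are the measurability bookkeeping of Step 2 and the use of right-continuity in Step 3; there is no essential difficulty.
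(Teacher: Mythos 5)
Your proof is correct, and the stopping-time step (right-continuity of both filtrations, a common càdlàg modification, Doob's optional sampling applied twice — or equivalently the dyadic approximation) is exactly the paper's. Where you diverge is in the deterministic-time step. The paper verifies the defining property of conditional expectation directly: it tests $\xi$ against generators of $\cF^{n+1}_s$ of the form $\psi(\chi,\mathfrak{U}_{n+1}\ind{\tau_{n+1}\le s})$ (written $X_{n+1}$ in the appendix), splits on $\{s<\tau_{n+1}\}$ versus $\{\tau_{n+1}\le s\}$, and integrates out the new mark via $\hat\psi(y)=\int\psi(y,x)\,\P_{\mathfrak{U}_{n+1}}(\ud x)$. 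You instead package the same independence into the sandwich $\cF^n_t\subseteq\cF^{n+1}_t\subseteq\cF^n_t\vee\sigma(\mathfrak{U}_{n+1})$ (valid because $\ind{\tau_{n+1}\le t}$ is $\cF^n_t$-measurable, $\tau_{n+1}$ being an $\F^n$-stopping time) together with the standard lemma that conditioning on an extra $\sigma$-field independent of $\sigma(\xi)\vee\cF^n_t$ changes nothing, and then conclude by the tower property. Both arguments rest on the same fact — $\mathfrak{U}_{n+1}$ is independent of $\cF^n_\infty$ — but yours is more conceptual and avoids the monotone-class bookkeeping, at the cost of having to establish the inclusion $\cF^n_\infty\subseteq\cF^0_\infty\vee\sigma(\mathfrak{U}_1,\dots,\mathfrak{U}_n)$ by induction (which you do). The paper's explicit computation, by contrast, is self-contained at a fixed time $s$ and also exhibits the concrete form of $\espcond{\,\cdot\,}{\cF^{n+1}_s}$ on the two events, which it reuses elsewhere. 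Either route is a complete proof.
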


\begin{proof}
  Assume first that $\sigma = s$ is deterministic.\\
  Let $\tilde\xi = \psi(\chi, X_{n+1} 1_{\tau_{n+1} \le s})$ be a $\cF^{n+1}_s$-measurable bounded variable, where $\chi$ is $\cF^n_s$-measurable. We need to show
  \begin{align*}
    \esp{\xi\tilde\xi} = \esp{\espcond{\xi}{\cF^n_s}\tilde\xi}.
  \end{align*}
  We have, with $\hat\psi(y) := \int \psi(y,x) \P_{X_{n+1}}(\ud x)$,
  \begin{align*}
    \nonumber \esp{\espcond{\xi}{\cG^n_s}\psi(\chi, X_{n+1} 1_{\tau_{n+1} \le s})} &= \esp{\espcond{\xi}{\cG^n_s}\psi(\chi, 0) 1_{s < \tau_{n+1}}} + \esp{\espcond{\xi}{\cG^n_s}\psi(\chi, X_{n+1}) 1_{\tau_{n+1} \le s}} \\
                                                                         &= \esp{\xi \psi(\chi, 0) 1_{s < \tau_{n+1}}} + \esp{\xi \hat\psi(\chi)1_{\tau_{n+1} \le s}},
  \end{align*}
  and the same computation with $\xi$ instead of $\espcond{\xi}{\cG^n_s}$ gives the same result.\\
  Let $\sigma$ be a $\F^n$-stopping time, and let $\xi_s = \espcond{\xi}{\cF^n_s} = \espcond{\xi}{\cF^{n+1}_s}$. Since $\F^n$ (or $\F^{n+1}$) is right-continuous, there exists a right-continuous modification of $(\xi_s)_{s \ge 0}$. Applying Doob's Theorem twice gives $\xi_\sigma = \espcond{\xi}{\cF^n_\sigma}$ and $\xi_\sigma = \espcond{\xi}{\cF^{n+1}_\sigma}$, hence we get the result.
  \eproof
\end{proof}
We are now in position to prove an Integral Representation Theorem in the filtrations $\F^n$, for all $n \ge 0$.
\begin{Proposition}
  Let $n \ge 0$ and $\xi \in L^2(\cF^n_T)$. Then there exists a $\G^n-$predictable process $\psi$ such that
  \begin{align*}
    \xi = \espcond{\xi}{\cF^n_{T \wedge \tau_n}} + \int_{T \wedge \tau_n}^T \psi_s \ud W_s.
  \end{align*}
\end{Proposition}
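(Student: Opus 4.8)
The plan is to derive this from a \emph{parametrised} Brownian representation theorem in the enlarged filtration $\G^n=\F^0\vee\sigma(X_1,\dots,X_n)$, and then to ``move the origin'' of the stochastic integral from $0$ to $T\wedge\tau_n$ via Doob's optional sampling theorem together with the coincidence of $\F^n$ and $\G^n$ on $\llbracket\tau_n,+\infty\llbracket$. Note first that on $\{\tau_n>T\}$ one has $T\wedge\tau_n=T$, so $\espcond{\xi}{\cF^n_{T\wedge\tau_n}}=\espcond{\xi}{\cF^n_T}=\xi$ and the asserted identity is trivially true on this event (take the integrand to be $0$ there); the substance of the statement concerns $\{\tau_n\le T\}$, where $\F^n$ has revealed all of $X_1,\dots,X_n$ and agrees with $\G^n$.

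First I would prove that every $\chi\in L^2(\cG^n_T)$ satisfies $\chi=\espcond{\chi}{\cG^n_0}+\int_0^T\psi_s\,\ud W_s$ for some $\G^n$-predictable $\psi$. Since $\sigma(X_1,\dots,X_n)$ is independent of $\cF^0_\infty$, $W$ remains a Brownian motion for $\G^n$, and $\G^n$ is right-continuous by the preceding Proposition. For a product $\chi=\eta\,g(X_1,\dots,X_n)$ with $\eta\in L^2(\cF^0_T)$ and $g$ bounded measurable, one applies the Brownian representation theorem in $\F^0$ to write $\eta=\E[\eta]+\int_0^T\phi_s\,\ud W_s$ (recall $\cF^0_0$ is trivial), multiplies through by $g(X_1,\dots,X_n)$, which is $\cG^n_0$-measurable, and uses $\espcond{\eta\,g(X_1,\dots,X_n)}{\cG^n_0}=\E[\eta]\,g(X_1,\dots,X_n)$ by independence; the general case follows by the $L^2$-isometry of the stochastic integral and a density/monotone-class argument, such products being total in $L^2(\cG^n_T)$. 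Applying this to $\xi\in L^2(\cF^n_T)\subset L^2(\cG^n_T)$ produces a $\G^n$-predictable $\psi$ such that the (continuous) $\G^n$-martingale $M_t:=\espcond{\xi}{\cG^n_t}$ equals $M_0+\int_0^t\psi_s\,\ud W_s$ on $[0,T]$.

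Next, an induction on $k$ (using that $\F^{k-1}\subset\G^{k-1}$ and the recursive definition of the $\tau$'s) shows that each $\tau_k$, $k\le n$, is a $\G^n$-stopping time, so $T\wedge\tau_n$ is a bounded $\G^n$-stopping time and optional sampling gives $M_{T\wedge\tau_n}=\espcond{\xi}{\cG^n_{T\wedge\tau_n}}$. I would then check that this equals $\espcond{\xi}{\cF^n_{T\wedge\tau_n}}$: on $\{\tau_n>T\}$ both are $\xi$, while on $\{\tau_n\le T\}$ one argues that the right-continuous martingales $\eta_s:=\espcond{\xi}{\cF^n_s}$ and $\eta'_s:=\espcond{\xi}{\cG^n_s}$ satisfy $\eta_s\ind{\tau_n\le s}=\eta'_s\ind{\tau_n\le s}$ for every $s$. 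Indeed, by the coincidence on $\llbracket\tau_n,+\infty\llbracket$ the variable $\eta'_s\ind{\tau_n\le s}$ is $\cF^n_s$-measurable, hence $\eta'_s\ind{\tau_n\le s}=\espcond{\eta'_s}{\cF^n_s}\ind{\tau_n\le s}=\espcond{\xi}{\cF^n_s}\ind{\tau_n\le s}$ since $\cF^n_s\subset\cG^n_s$ and $\tau_n$ is an $\F^n$-stopping time; then one lets $s\downarrow\tau_n(\omega)$ on $\{\tau_n\le T\}$, using right-continuity of $\eta$ and $\eta'$ ($\F^n$ and $\G^n$ being right-continuous). Since $\int_0^{T\wedge\tau_n}\psi_s\,\ud W_s=\int_0^T\psi_s\ind{s\le T\wedge\tau_n}\,\ud W_s$ by optional stopping of the stochastic integral, this yields
\begin{align*}
  \xi = M_T = M_{T\wedge\tau_n} + \int_{T\wedge\tau_n}^T \psi_s\,\ud W_s = \espcond{\xi}{\cF^n_{T\wedge\tau_n}} + \int_{T\wedge\tau_n}^T \psi_s\,\ud W_s,
\end{align*}
the process $s\mapsto\psi_s\ind{s>T\wedge\tau_n}$ being $\G^n$-predictable as $\rrbracket T\wedge\tau_n,+\infty\llbracket$ is a predictable set.

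The routine parts are the parametrised representation in $\G^n$ and the optional-stopping bookkeeping. The main obstacle is the identity $\espcond{\xi}{\cG^n_{T\wedge\tau_n}}=\espcond{\xi}{\cF^n_{T\wedge\tau_n}}$: it cannot be read off from a mere inclusion of filtrations and must be handled through the ``coincidence on a stochastic interval'' notion, together with a careful verification that $\tau_n$ is simultaneously an $\F^n$- and a $\G^n$-stopping time so that both optional-sampling statements and the predictability of the truncated integrand are legitimate.
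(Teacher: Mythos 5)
Your proof is correct, but it takes a genuinely different route from the paper's. The paper proceeds by induction on $n$: it approximates $\xi$ in $L^2(\cF^n_T)$ by finite sums of products $\chi\,\zeta$ with $\chi\in L^\infty(\cF^{n-1}_T)$ and $\zeta\in L^\infty(\sigma(X_n\ind{\tau_n\le T}))$, applies the induction hypothesis to $\chi$, shifts the anchor point from $T\wedge\tau_{n-1}$ to $T\wedge\tau_n$ (legitimate since $\tau_n\ge\tau_{n-1}$ is an $\F^{n-1}$-stopping time), and then pulls the $\cF^n_{T\wedge\tau_n}$-measurable factor $\zeta$ inside both the stochastic integral and the conditional expectation, the latter via the preceding Lemma $\espcond{\cdot}{\cF^{n+1}_\sigma}=\espcond{\cdot}{\cF^n_\sigma}$, before passing to the $L^2$-limit. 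You instead avoid induction entirely: you establish a representation theorem in the \emph{initially} enlarged filtration $\G^n$ (valid because $\sigma(X_1,\dots,X_n)$ is independent of $W$, so $W$ remains a $\G^n$-Brownian motion), then transfer back to $\F^n$ by optional stopping at the $\G^n$-stopping time $T\wedge\tau_n$ combined with the coincidence of $\F^n$ and $\G^n$ on $\llbracket\tau_n,+\infty\llbracket$; your handling of the key identity $\espcond{\xi}{\cG^n_{T\wedge\tau_n}}=\espcond{\xi}{\cF^n_{T\wedge\tau_n}}$ (fixed-time identity on $\{\tau_n\le s\}$, then right-continuity of both martingale modifications along a countable set of times decreasing to $\tau_n$) is sound. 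Your approach buys a cleaner conceptual picture and the stronger fact that \emph{every} element of $L^2(\cG^n_T)$ admits a Brownian representation from time $0$; the paper's induction stays entirely within the progressively enlarged filtrations and delivers an integrand that is in fact $\F^n$-predictable, which is what is implicitly used when the representations are aggregated over $n$ in the subsequent theorem. If that stronger measurability is needed, you would add the observation that on $\rrbracket\tau_n,T\rrbracket$ the coincidence of $\F^n$ and $\G^n$ lets you replace your truncated integrand by an $\F^n$-predictable one; as the Proposition is literally stated ($\G^n$-predictability), your argument is complete.
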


\begin{proof}
  We prove the theorem by induction on $n \ge 0$, following ideas from \cite{A00}. The case $n = 0$ is the usual Martingale Representation Theorem in the augmented Brownian filtration $\F^0$.\\
  Assume now that the statement is true for all $\xi \in L^2(\cF^{n-1}_T)\, (n \ge 1)$. Let $\xi \in L^2(\cF^n_T)$.\\
  Since $\cF^n_T = \cF^{n-1}_T \vee \sigma(X_n 1_{\tau_n \le T})$, we get that $\xi = \lim_{m \to \infty} \xi_m$ in $L^2(\cF^n_T)$, with $\xi_m = \sum_{i=1}^{l_m} \chi^i_m \zeta^i_m$ and $(\chi^i_m,\zeta^i_m) \in L^\infty(\cF^{n-1}_T) \times L^\infty(\sigma(X_n 1_{\tau_n \le T}))$ for all $m \ge 0$ and $1 \le i \le l_m$.\\
  By induction, there exist $\F^{n-1}$-predictable processes $\psi^{i,m}$ such that $\chi^i_m = \espcond{\chi^i_m}{\cF^{n-1}_{T \wedge \tau_{n-1}}} + \int_{T \wedge \tau_{n-1}}^T \psi^{i,m}_s \ud W_s$. Since $\tau_n$ is a $\F^{n-1}$-stopping time with $\tau_n \ge \tau_{n-1}$, we get:
  \begin{align*}
    \chi^i_m = \espcond{\chi^i_m}{\cF^{n-1}_{T \wedge \tau_n}} + \int_{T \wedge \tau_n}^T \psi^{i,m}_s \ud W_s.
  \end{align*}
  Since $\zeta^i_m \in L^\infty(\sigma(X_n 1_{\tau_n \le T})) \subset L^2(\cF^n_{T \wedge \tau_n})$, we get
  \begin{align*}
    \zeta^i_m \int_{T \wedge \tau_n}^T \psi^{i,m}_s \ud W_s = \int_{T \wedge \tau_n}^T \zeta^i_m \psi^{i,m}_s \ud W_s.
  \end{align*}
  In addition, since $\chi^i_m$ is $\cF^{n-1}_T$-measurable and $\zeta^i_m \in L^2(\cF^n_{T \wedge \tau_n})$, we get, by the previous lemma,
  \begin{align*}
    \zeta^i_m \espcond{\chi^i_m}{\cF^{n-1}_{T \wedge \tau_n}} = \zeta^i_m \espcond{\chi^i_m}{\cF^n_{T \wedge \tau_n}} = \espcond{\chi^i_m \zeta^i_m}{\cF^n_{T \wedge \tau_n}}.
  \end{align*}
  Summing over $1 \le i \le l_m$ gives:
  \begin{align*}
    \nonumber \xi_m &= \sum_{i=1}^{l_m} \chi^i_m \zeta^i_m \\
    \nonumber &= \sum_{i=1}^{l_m} \espcond{\chi^i_m \zeta^i_m}{\cF^n_{T \wedge \tau_n}} + \sum_{i=1}^{l_m} \int_{T \wedge \tau_n}^T \zeta^i_m \psi^{i,m}_s \ud W_s \\
          &= \espcond{\xi_m}{\cF^n_{T \wedge \tau_n}} + \int_{T \wedge \tau_n}^T \psi^m_s \ud W_s,
  \end{align*}
  where $\psi^m := \sum_{i=1}^{l_m} \psi^{i,m}_s \zeta^i_m$.\\
  Finally, since $\xi_m \to \xi$ in $L^2(\cF^n_T)$, we get that $\espcond{\xi_m}{\cF^n_{T \wedge \tau_n}} \to \espcond{\xi}{\cF^n_{T \wedge \tau_n}}$ in $L^2(\cF^n_T)$, hence $\int_{T \wedge \tau_n}^T \psi^m_s \ud W_s$ converges to a limit $\int_{T \wedge \tau_n}^T \psi_s \ud W_s$ for a $\F^n$-predictable process $\psi$. \eproof
\end{proof}

\begin{Theorem}
  Let $0 \le T \le +\infty$ and $\xi \in L^2(\cG^n_T)$. For all $0 \le k \le n$, there exists $\F^k$-predictable processes $\psi^k$ such that:
  \begin{align*}
    \nonumber \xi &= \esp{\xi} + \sum_{k = 0}^{n-1} \int_{T \wedge \tau_k}^{T \wedge \tau_{k+1}} \psi^k_s \ud W_s + \int_{T \wedge \tau_n}^T \psi^n_s \ud W_s\\
    \nonumber &+  \sum_{k = 0}^{n-1} \left(\espcond{\xi}{\cF^{k+1}_{T \wedge \tau_{k+1}}} - \espcond{\xi}{\cF^k_{T \wedge \tau_{k+1}}}\right)\\
        &= \esp{\xi} + \int_0^T \Psi^n_s \ud W_s + \sum_{k = 0}^{n-1} \left(\espcond{\xi}{\cF^{k+1}_{T \wedge \tau_{k+1}}} - \espcond{\xi}{\cF^k_{T \wedge \tau_{k+1}}}\right),
  \end{align*}
  with $\Psi^n_t := \sum_{k = 0}^{n-1} \psi^k_t 1_{T \wedge \tau_k < t \le T \wedge \tau_{k+1}} + \psi^n_t 1_{T \wedge \tau_n < t \le T}$.
\end{Theorem}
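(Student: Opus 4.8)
The plan is to argue by induction on $n$. For $n=0$ one has $\cG^0_T=\cF^0_T$, and the asserted identity is just the classical martingale representation theorem in the augmented Brownian filtration $\F^0$ (the sum over $k$ being empty and $\Psi^0=\psi^0$). For the inductive step the idea is to \emph{peel off the jump at $\tau_n$}: first use the preceding Proposition to isolate a stochastic integral on $[T\wedge\tau_n,T]$, then split the remaining term $\espcond{\xi}{\cF^n_{T\wedge\tau_n}}$ into its $\F^{n-1}$-part plus the last jump term, and finally feed the $\F^{n-1}$-part into the induction hypothesis after ``stopping'' the corresponding $\F^{n-1}$-martingale at $T\wedge\tau_n$.

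Concretely, fix $n\ge1$ and $\xi\in L^2(\cG^n_T)$; one first reduces to $\xi\in L^2(\cF^n_T)$ (otherwise apply what follows to $\espcond{\xi}{\cF^n_T}$). The preceding Proposition provides a process $\psi^n$ — a priori $\G^n$-predictable, but, since $\F^n$ and $\G^n$ coincide on $\llbracket\tau_n,+\infty\llbracket$, it may be chosen $\F^n$-predictable on $[T\wedge\tau_n,T]$ — with $\xi=\espcond{\xi}{\cF^n_{T\wedge\tau_n}}+\int_{T\wedge\tau_n}^T\psi^n_s\,\ud W_s$. Write $\espcond{\xi}{\cF^n_{T\wedge\tau_n}}=\espcond{\xi}{\cF^{n-1}_{T\wedge\tau_n}}+\big(\espcond{\xi}{\cF^n_{T\wedge\tau_n}}-\espcond{\xi}{\cF^{n-1}_{T\wedge\tau_n}}\big)$, the second bracket being exactly the $k=n-1$ summand of the target. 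For the first term, put $\eta:=\espcond{\xi}{\cF^{n-1}_T}\in L^2(\cF^{n-1}_T)\subset L^2(\cG^{n-1}_T)$; since $\tau_n$ is an $\F^{n-1}$-stopping time and $T\wedge\tau_n\le T$ one has $\cF^{n-1}_{T\wedge\tau_n}\subset\cF^{n-1}_T$, so $\espcond{\xi}{\cF^{n-1}_{T\wedge\tau_n}}=\espcond{\eta}{\cF^{n-1}_{T\wedge\tau_n}}$ by the tower property. Apply the induction hypothesis to $\eta$ (giving $\F^k$-predictable $\psi^k$, $0\le k\le n-1$, and $\widetilde\Psi^{n-1}_t=\sum_{k=0}^{n-2}\psi^k_t\ind{T\wedge\tau_k<t\le T\wedge\tau_{k+1}}+\psi^{n-1}_t\ind{T\wedge\tau_{n-1}<t\le T}$) and take $\espcond{\,\cdot\,}{\cF^{n-1}_{T\wedge\tau_n}}$ of both sides: the constant $\esp{\eta}=\esp{\xi}$ is unchanged; the stochastic integral $\int_0^T\widetilde\Psi^{n-1}_s\,\ud W_s$ is an $\F^{n-1}$-martingale (each $\psi^k$ is $\F^k$-predictable and $\F^k\subset\F^{n-1}$, the interval indicators being left-continuous $\F^k$-adapted), so optional stopping turns it into $\int_0^{T\wedge\tau_n}\widetilde\Psi^{n-1}_s\,\ud W_s$; and each lower jump term, being $\cF^{k+1}_{T\wedge\tau_{k+1}}$-measurable with $\cF^{k+1}_{T\wedge\tau_{k+1}}\subset\cF^{n-1}_{T\wedge\tau_n}$ (because $\tau_{k+1}\le\tau_n$), is left unaffected, while the tower property rewrites the conditional expectations of $\eta$ there as conditional expectations of $\xi$, producing exactly the $k\le n-2$ summands of the target.

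Collecting the three displays yields $\xi=\esp{\xi}+\int_0^{T\wedge\tau_n}\widetilde\Psi^{n-1}_s\,\ud W_s+\int_{T\wedge\tau_n}^T\psi^n_s\,\ud W_s+\sum_{k=0}^{n-1}\big(\espcond{\xi}{\cF^{k+1}_{T\wedge\tau_{k+1}}}-\espcond{\xi}{\cF^k_{T\wedge\tau_{k+1}}}\big)$, and it only remains to identify $\widetilde\Psi^{n-1}_t\ind{t\le T\wedge\tau_n}+\psi^n_t\ind{T\wedge\tau_n<t\le T}$ with the process $\Psi^n$ of the statement: multiplying the indicator $\ind{T\wedge\tau_{n-1}<t\le T}$ appearing in $\widetilde\Psi^{n-1}$ by $\ind{t\le T\wedge\tau_n}$ truncates it to $\ind{T\wedge\tau_{n-1}<t\le T\wedge\tau_n}$, whereas the indicators $\ind{T\wedge\tau_k<t\le T\wedge\tau_{k+1}}$ for $k\le n-2$ are untouched — which is exactly the decomposition of $\Psi^n$. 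I expect the main obstacle to be precisely this bookkeeping: checking that stopping the $\F^{n-1}$-martingale at the $\F^{n-1}$-stopping time $T\wedge\tau_n$ both preserves the lower jump terms and truncates $\widetilde\Psi^{n-1}$ to the right support, together with the routine verification that the $\G^n$-predictable integrand furnished by the preceding Proposition can be taken $\F^n$-predictable on $[T\wedge\tau_n,T]$; everything else uses only the coincidence-of-filtrations, right-continuity, and conditional-expectation lemmas already established above.
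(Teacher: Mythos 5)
Your induction --- peeling off the representation on $[T\wedge\tau_n,T]$ via the preceding Proposition, isolating the jump term at $\tau_n$, and feeding $\espcond{\xi}{\cF^{n-1}_{T\wedge\tau_n}}$ back into the induction hypothesis through optional stopping at the $\F^{n-1}$-stopping time $T\wedge\tau_n$ --- is correct and is precisely the unwinding of the paper's one-line proof (``immediate consequence of the previous theorem''). The only caveats are cosmetic ones already present in the paper's own statement (reading $\xi\in L^2(\cF^n_T)$ rather than $L^2(\cG^n_T)$, and upgrading the $\G^n$-predictable integrand of the Proposition to an $\F^n$-predictable one on $\llbracket \tau_n,T\rrbracket$ via the coincidence of filtrations), and you handle both appropriately.
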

\begin{proof}
  This is an immediate consequence of the previous theorem.
  \eproof
\end{proof}
Last, we extend this theorem to obtain an Integral Representation Theorem in $\F^\infty$.\\
We now fix $\xi \in L^2(\cF^\infty_T)$ and consider the filtration $\A = (\cA_n)_{n \in \N}$ defined by $\cA_n := \cF^n_T$. We have $\cA_\infty = \bigvee_n \cA_n = \cF^\infty_T$. By Lévy's Theorem, we get
\begin{align}
  \label{lim1}
  \espcond{\xi}{\cF^n_T} = \espcond{\xi}{\cA_n} \to \espcond{\xi}{\cA_\infty} = \xi,\mbox{ a.s.}
\end{align}
For all $n \ge 0$, since $\cF^n_T \subset \cF_T$, we can write:
\begin{align*}
  \nonumber \espcond{\xi}{\cF^n_T} = &\esp{\xi} + \sum_{k=0}^{n-1} \int_{T \wedge \tau_k}^{T \wedge \tau_{k+1}} \psi^{n,k}_s \ud W_s + \int_{T \wedge \tau_n}^T \psi^{n,n}_s \ud W_s \\
                                               &+ \sum_{k=0}^{n-1} \left( \espcond{\xi}{\cF^{k+1}_{T \wedge \tau_{k+1}}} - \espcond{\xi}{\cF^k_{T \wedge \tau_{k+1}}} \right).
\end{align*}

\begin{Lemma}
  We have $\psi^{n,k} = \psi^{k,k}$ on $[T \wedge \tau_k, T \wedge \tau_{k+1})$, for all $n \ge k$.
\end{Lemma}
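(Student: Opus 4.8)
The plan is to recover $\psi^{n,k}$ from the \emph{path} of the conditional--expectation martingale $\espcond{\xi}{\cF^n_\cdot}$ on the stochastic interval $\llbracket\tau_k,\tau_{k+1}\llbracket$, exploiting that all the filtrations $\F^n$ with $n\ge k$ coincide there. The case $n=k$ is trivial (then $\psi^{k,k}$ is, by definition, the last integrand in the displayed representation of $\espcond{\xi}{\cF^k_T}$), so I fix $n>k$ and write $N^n_\cdot:=\espcond{\xi}{\cF^n_\cdot}$ and $N^k_\cdot:=\espcond{\xi}{\cF^k_\cdot}$, which are square--integrable c\`adl\`ag martingales in $\F^n$, resp.\ $\F^k$, with jumps only at the times $\tau_1\wedge T,\dots,\tau_n\wedge T$ (resp.\ $\tau_1\wedge T,\dots,\tau_k\wedge T$).

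The first step is to show that $N^n$ and $N^k$ agree on $\llbracket\tau_k,\tau_{k+1}\llbracket$. Iterating \eqref{dec-espcond} and using that the random times $(\tau_j)_j$ are nondecreasing, one gets on $\{t<\tau_{k+1}\}$ --- where $t<\tau_j$ for every $j\ge k+1$, so that the indicator $\ind{\tau_j\le t}$ in \eqref{dec-espcond} vanishes --- the chain $\espcond{\xi}{\cF^n_t}=\espcond{\xi}{\cF^{n-1}_t}=\dots=\espcond{\xi}{\cF^k_t}$. Hence, for $\P$-a.e.\ $\omega$, the two paths coincide on $[\tau_k(\omega)\wedge T,\tau_{k+1}(\omega)\wedge T)$, and both are continuous on the open interior of this interval since none of the jump times $\tau_j\wedge T$ lies strictly inside it.

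The second step identifies the integrand through the bracket with $W$. Recall that $W$ remains a Brownian motion under each $\F^n$ (this is what makes the $\ud W$-integrals in the representation theorems above meaningful), so each component $W^\ell$ is a continuous martingale both in $\F^n$ and in $\F^k$. From the representation of $\espcond{\xi}{\cF^n_T}$ displayed just before the statement, $N^n=\esp{\xi}+\int_0^\cdot\Psi^n_s\,\ud W_s+O^n$, where $\Psi^n$ is the process obtained by piecing together the $\psi^{n,j}$ (so that $\Psi^n=\psi^{n,k}$ on $\llbracket\tau_k\wedge T,\tau_{k+1}\wedge T\llbracket$) and $O^n$ is a martingale carried by the jump times $\tau_j\wedge T$ and orthogonal to $W$. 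Taking the covariation with $W^\ell$ --- which sees neither the jumps of $N^n$ nor the orthogonal part $O^n$ --- gives $[N^n,W^\ell]_\cdot=\int_0^\cdot(\Psi^n_s)_\ell\,\ud s$, and likewise $[N^k,W^\ell]_\cdot=\int_0^\cdot(\Psi^k_s)_\ell\,\ud s$ with $\Psi^k\equiv\psi^{k,k}$ on $\llbracket\tau_k\wedge T,\tau_{k+1}\wedge T\llbracket$. Since $[\,\cdot\,,W^\ell]$ is a bracket with the continuous process $W^\ell$ it is itself continuous (in particular it does not jump at $\tau_k\wedge T$) and it is obtained pathwise as a ucp-limit of sums of increments; combined with the first step, its increments over $[\tau_k\wedge T,t]$ are therefore the same for $N^n$ as for $N^k$, for every $t\in[\tau_k\wedge T,\tau_{k+1}\wedge T)$. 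Hence $\int_{\tau_k\wedge T}^t(\Psi^n_s)_\ell\,\ud s=\int_{\tau_k\wedge T}^t(\Psi^k_s)_\ell\,\ud s$ on that interval for each $\ell$, and differentiating in $t$ yields $\Psi^n=\Psi^k$ $\P\otimes\ud s$-a.e.\ on $\llbracket\tau_k\wedge T,\tau_{k+1}\wedge T\llbracket$, i.e.\ $\psi^{n,k}=\psi^{k,k}$ there, which is the claim.

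The only delicate point is the last step: one must argue that the (optional) covariation of $N^n$ with the continuous $W^\ell$ depends only on the paths of $N^n$ and $W^\ell$ and retains solely the contribution of the continuous martingale part $\int\Psi^n\,\ud W$, so that it genuinely localizes to $\llbracket\tau_k,\tau_{k+1}\llbracket$ even though $N^n$ and $N^k$ are adapted to different filtrations; this rests on the standard facts that $[N,W^\ell]=\langle N^c,W^\ell\rangle$ whenever $W^\ell$ is continuous and that this process has no jump where $W^\ell$ is continuous. An alternative, avoiding brackets, is to establish directly from the displayed representation that $N^n_t-N^n_{\tau_k\wedge T}=\int_{\tau_k\wedge T}^t\psi^{n,k}_s\,\ud W_s$ for $t\in[\tau_k\wedge T,\tau_{k+1}\wedge T)$ --- the earlier $\ud W$-integrals and jump terms being $\cF^n_{\tau_k\wedge T}$-measurable and the later ones having vanishing $\cF^n_t$-conditional expectation --- and then invoke the first step together with the uniqueness of the integrand in the martingale representation on the Brownian filtration; but that route requires a more careful treatment of the \emph{later} terms, so I would favour the bracket argument.
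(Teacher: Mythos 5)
Your argument is correct, but it identifies the integrand by a different mechanism than the paper. The paper's (very terse) proof proceeds by induction, comparing the representation of $\espcond{\xi}{\cF^k_T}$ with the one obtained by projecting the representation of $\espcond{\xi}{\cF^n_T}$ onto $\cF^k_T$, and concluding by It\^o's isometry, i.e.\ by the $L^2$-uniqueness of the integrand over the whole horizon $[0,T]$; the work hidden in ``it follows easily'' is precisely the computation of the conditional expectations of the $\ud W$-integrals and of the $\Delta$-terms given $\cF^k_T$. You instead never project the representation: you use the coincidence identity \eqref{dec-espcond} (iterated, which is legitimate after replacing $\xi$ by $\espcond{\xi}{\cF^{n+1}_\infty}$) to show that the martingales $\espcond{\xi}{\cF^n_\cdot}$ and $\espcond{\xi}{\cF^k_\cdot}$ have the same paths on $\llbracket \tau_k\wedge T,\tau_{k+1}\wedge T\llbracket$, and then read off the integrand locally from the covariation with $W^\ell$, exploiting that the covariation with a continuous martingale kills both the purely discontinuous part carried by the $\tau_j$'s and the $W$-orthogonal part, and that the covariation is a pathwise (filtration-free) ucp limit which therefore localizes to the stochastic interval. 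What your route buys is a local-in-time identification that bypasses the delicate projection of $\F^n$-predictable integrands onto the smaller filtration; what it costs is the need to justify that $W$ remains a Brownian motion in each $\F^n$ (true here since the enlargement is by independent randomisation, and implicitly assumed throughout the paper) and the pathwise-localization property of $[\cdot,W^\ell]$, both of which you correctly flag. Either argument is acceptable; yours is arguably more self-contained.
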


\begin{proof} It follows easily by induction, comparing $\espcond{\xi}{\cF^k_T}$ and $\espcond{\espcond{\xi}{\cF^n_T}}{\cF^k_T}$ and using Itô's isometry.\\
  \eproof
\end{proof}

For all $n \ge 0$, we define $\psi^n := \psi^{n,n}$. Thus we have, for all $n \ge 0$,
\begin{align*}
  \nonumber \espcond{\xi}{\cG^n_T} = &\esp{\xi} + \sum_{k=0}^{n-1} \int_{T \wedge \tau_k}^{T \wedge \tau_{k+1}} \psi^k_s \ud W_s + \int_{T \wedge \tau_n}^T \psi^n_s \ud W_s \\
                                               &+ \sum_{k=0}^{n-1} \left( \espcond{\xi}{\cF^{k+1}_{T \wedge \tau_{k+1}}} - \espcond{\xi}{\cF^k_{T \wedge \tau_{k+1}}} \right). 
\end{align*}
We set, for $0 \le s \le T$,
\begin{align*}
  \Psi_s &= \sum_{k = 0}^{+\infty} \psi^k_s 1_{T \wedge \tau_k \le s < T \wedge \tau_{k+1}},\\
  \Psi^n_s &= \Psi_s 1_{s \le T \wedge \tau_{n+1}} + \psi^n_s 1_{T \wedge \tau_{n+1} < s}, \mbox{ and}\\
  \Delta^k_s &:= \espcond{\xi}{\cF^{k+1}_{s \wedge \tau_{k+1}}} - \espcond{\xi}{\cF^k_{s \wedge \tau_{k+1}}},
\end{align*}
so that
\begin{align*}
  \espcond{\xi}{\cF^n_T} = \esp{\xi} + \int_0^T \Psi^n_s \ud W_s + \sum_{k=0}^{n-1} \Delta^k_T.
\end{align*}

\begin{Theorem}[Integral Representation Theorem for $\F^\infty$]
  For $\xi \in L^2(\cF^\infty_T)$, we have
  \begin{align*}
    \xi = \esp{\xi} + \int_0^T \Psi_s \ud W_s + \sum_{k = 0}^{+\infty} \Delta^k_T.
  \end{align*}
\end{Theorem}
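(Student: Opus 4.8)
The plan is to let $n \to \infty$ in the identity
\begin{align*}
\espcond{\xi}{\cF^n_T} = \esp{\xi} + \int_0^T \Psi^n_s \ud W_s + \sum_{k=0}^{n-1} \Delta^k_T
\end{align*}
established just above the statement. First I would note that, since $\xi \in L^2(\cF^\infty_T)$ and $\cF^\infty_T = \bigvee_{n \ge 0} \cF^n_T$, the closed martingale $\big( \espcond{\xi}{\cF^n_T} \big)_{n \ge 0}$ is bounded in $L^2$, so L\'evy's theorem yields $\espcond{\xi}{\cF^n_T} \to \espcond{\xi}{\cF^\infty_T} = \xi$ in $L^2$. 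It then remains to show that each of the two sums on the right converges in $L^2$ and to identify the limits.

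For the jump part I would first record the $L^2$-orthogonality of the summands $\int_0^T \Psi^n_s \ud W_s, \Delta^0_T, \dots, \Delta^{n-1}_T$, the displayed identity being just the decomposition of the martingale $t \mapsto \espcond{\xi}{\cF^n_t}$ into its continuous part $\int_0^{\cdot} \Psi^n_s \ud W_s$ and the purely discontinuous martingales $\Delta^k$. Orthogonality of $\Delta^j$ and $\Delta^k$ for $j < k$ follows because $\Delta^j_T$ is $\cF^{j+1}_{T \wedge \tau_{j+1}}$-measurable, hence $\cF^k_{T \wedge \tau_{k+1}}$-measurable (as $\tau_{j+1} \le \tau_{k+1}$ and $\F^{j+1} \subset \F^k$), while $\espcond{\Delta^k_T}{\cF^k_{T \wedge \tau_{k+1}}} = 0$ by the tower property and $\cF^k_{T \wedge \tau_{k+1}} \subset \cF^{k+1}_{T \wedge \tau_{k+1}}$; orthogonality with $\int_0^{\cdot} \Psi^n_s \ud W_s$ is the standard continuous-versus-purely-discontinuous orthogonality. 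Pythagoras' identity and It\^o's isometry then give
\begin{align*}
\esp{\big| \espcond{\xi}{\cF^n_T} - \esp{\xi} \big|^2} = \esp{\int_0^T |\Psi^n_s|^2 \ud s} + \sum_{k=0}^{n-1} \esp{\big( \Delta^k_T \big)^2},
\end{align*}
whose left-hand side converges; hence $\sum_{k \ge 0} \esp{( \Delta^k_T )^2} < +\infty$, so the partial sums $\sum_{k=0}^{n-1} \Delta^k_T$ are Cauchy in $L^2$ and converge to $S := \sum_{k=0}^{+\infty} \Delta^k_T$, a sum which is moreover a.s.\ finite since $\Delta^k_T$ vanishes once $k$ exceeds the a.s.\ finite number of switches of $\phi$ in $[0,T]$.

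It then follows that $\int_0^T \Psi^n_s \ud W_s = \big( \espcond{\xi}{\cF^n_T} - \esp{\xi} \big) - \sum_{k=0}^{n-1} \Delta^k_T$ is Cauchy in $L^2$; since the It\^o integrals $\int_0^T \phi_s \ud W_s$ with $\phi \in \H^2_\kappa(\F^\infty)$ form a closed subspace of $L^2(\cF^\infty_T)$, the limit equals $\int_0^T \Phi_s \ud W_s$ for some $\Phi \in \H^2_\kappa(\F^\infty)$, with $\Psi^n \to \Phi$ in $\H^2_\kappa(\F^\infty)$ by It\^o's isometry. To identify $\Phi = \Psi$, I would use that a.s.\ there are only finitely many switching times in $[0,T]$, say $N(\omega)$ of them, so that for $n \ge N(\omega)$ one has $\tau_{n+1}(\omega) > T$ and therefore $\Psi^n_s(\omega) = \Psi_s(\omega)$ for every $s \in [0,T]$ (the truncation term $\psi^n_s \ind{T \wedge \tau_{n+1} < s}$ vanishes on $[0,T]$); hence $\Psi^n \to \Psi$ pointwise $\ud s \otimes \ud\P$-a.e., and comparing with an a.e.\ convergent subsequence of $\Psi^n \to \Phi$ gives $\Phi = \Psi$, so $\Psi \in \H^2_\kappa(\F^\infty)$ and $\int_0^T \Psi^n_s \ud W_s \to \int_0^T \Psi_s \ud W_s$ in $L^2$. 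Letting $n \to \infty$ in the displayed identity then yields $\xi = \esp{\xi} + \int_0^T \Psi_s \ud W_s + \sum_{k=0}^{+\infty} \Delta^k_T$. The step I expect to be most delicate is precisely this identification of the $\H^2$-limit of $(\Psi^n)$ with $\Psi$: the truncation term in $\Psi^n$ only disappears once $n$ exceeds the random number of switches before $T$, so the argument rests on the admissibility of $\phi$ and the good behaviour of $\F^\infty$ on $[0,T]$ from the previous propositions, rather than on a purely formal passage to the limit; the orthogonality bookkeeping leading to $\sum_k \esp{( \Delta^k_T )^2} < +\infty$ is the other point requiring care.
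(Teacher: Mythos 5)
Your proof is correct, but it follows a genuinely different route from the paper's. The paper's argument is purely pathwise: on the event $\set{N^\phi_T \le n}$ one has $T<\tau_{n+1}$, so the truncation term in $\Psi^n$ vanishes on $[0,T]$ and every tail increment $\Delta^k_T$, $k\ge n$, is shown to be zero there (via the decomposition \eqref{dec-espcond} applied to $\espcond{\xi}{\cF^{k+1}_T}$); hence the partial-sum identity already coincides with the claimed identity on $\set{N^\phi_T\le n}$, and one concludes by letting $n\to\infty$ using $\ind{N^\phi_T\le n}\to 1$ a.s.\ and the a.s.\ convergence $\espcond{\xi}{\cF^n_T}\to\xi$ from L\'evy's theorem --- the series $\sum_k\Delta^k_T$ being a.s.\ a finite sum, no convergence of series needs to be discussed. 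Your argument instead works in $L^2$: you observe that the terms $\int_{T\wedge\tau_k}^{T\wedge\tau_{k+1}}\psi^k_s\,\ud W_s$ and $\Delta^k_T$ are successive martingale increments along the discrete filtration $\cF^0_{T\wedge\tau_1}\subset\cF^1_{T\wedge\tau_1}\subset\cF^1_{T\wedge\tau_2}\subset\cdots$, deduce Pythagoras' identity, obtain $\sum_k\esp{(\Delta^k_T)^2}<+\infty$ and $L^2$-convergence of both series, and then identify the $\H^2$-limit of $(\Psi^n)$ with $\Psi$ by the same pathwise observation that $\Psi^n=\Psi$ on $[0,T]$ once $n\ge N^\phi_T$. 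Both arguments ultimately rest on $N^\phi_T<+\infty$ a.s.\ (admissibility plus $\hat c>0$). Your version is longer and the orthogonality bookkeeping needs the care you acknowledge (in particular the ``continuous versus purely discontinuous'' step is cleanest when phrased as successive martingale increments), but it buys something the paper's proof does not state explicitly: it shows $\Psi\in\H^2_\kappa(\F^\infty)$ and that $\sum_k\Delta^k$ is a square-integrable martingale, which is exactly what the Remark following the theorem implicitly uses.
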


\begin{proof}
  By definition of $N = N^\phi_T$, we have $T < \tau_{n+1}$ on $\{n \ge N\}$, see Section \ref{game}. Thus,
  \begin{align*}
    \nonumber 1_{N \le n} \int_0^T \Psi^n_s \ud W_s &= \left( \int_0^{T \wedge \tau_{n+1}} \Psi_s \ud W_s + \int_{T \wedge \tau_{n+1}}^T \psi^n_s \ud W_s\right) 1_{N \le n} 
                                                    = 1_{N \le n} \int_0^T \Psi_s \ud W_s.
  \end{align*}
  Moreover, if $k \ge n$, we have, since $T \wedge \tau_{k+1} = T$,
  \begin{align*}
    \nonumber \Delta^k_T 1_{N \le n} &= \left(\espcond{\xi}{\cF^{k+1}_{T \wedge \tau_{k+1}}} - \espcond{\xi}{\cF^k_{T \wedge\tau_{k+1}}}\right) 1_{N \le n} \\
                                     &= \left(\espcond{\xi}{\cF^{k+1}_T} - \espcond{\xi}{\cF^k_T}\right) 1_{N \le n}.
  \end{align*}
  Applying \eqref{dec-espcond} to $\chi = \espcond{\xi}{\cF^{k+1}_T}$, we get
  \begin{align*}
    \chi = \espcond{\chi}{\cF^{k+1}_T} &= \espcond{\chi}{\cF^k_T} 1_{T < \tau_{k+1}} + \espcond{\chi}{\cG^{k+1}_T} 1_{\tau_{k+1} \le T}.
  \end{align*}
  Since $T < \tau_{n+1} \le \tau_{k+1}$ on $\{N \le n\}$, we finally obtain
  \begin{align*}
    \espcond{\xi}{\cF^{k+1}_T} 1_{N \le n} = \chi 1_{N \le n} = \espcond{\chi}{\cF^k_T} 1_{N \le n} = \espcond{\xi}{\cF^k_T} 1_{N \le n},
  \end{align*}
  which gives $\Delta^k_T 1_{N \le n} = 0$.
  Thus:
  \begin{align*}
    \nonumber \espcond{\xi}{\cF^n_T}1_{N \le n} &= \left(\esp{\xi} + \int_0^T \Psi^n_s \ud W_s + \sum_{k=0}^{n-1} \Delta^k_T\right) 1_{N \le n} \\
                                                &= \left(\esp{\xi} + \int_0^T \Psi_s \ud W_s + \sum_{k=0}^{+\infty} \Delta^k_T\right) 1_{N \le n}.
  \end{align*}
  Since $1_{N \le n} \to 1$ a.s. when $n \to \infty$ as $N = N^\phi_T$ and $\phi$ is an admissible strategy, see Section \ref{game}, we get, sending $n$ to $+\infty$, recall \eqref{lim1},
  \begin{align*}
    \xi = \esp{\xi} + \int_0^T \Psi_s \ud W_s + \sum_{k=0}^{+\infty} \Delta^k_T.
  \end{align*} \eproof
\end{proof}

\begin{Remark}We have:
  \begin{align}
    \left[\int_0^\cdot \Psi_s \ud W_s, \sum_{k=0}^{+\infty} \Delta^k \right]_t &= 0, \\
    \left[\int_0^\cdot \Psi_s \ud W_s\right]_t &= \int_0^t \Psi^2_s \ud s, \\
    \left[\sum_{k=0}^{+\infty} \Delta^k\right]_t &= \sum_{\tau_{k+1} \le t} |\Delta^k_t|^2.
  \end{align}
  In particular, martingales $\int_0^\cdot \Psi_s dW_s$ and $\sum_k \Delta^k$ are orthogonal.
\end{Remark}

\subsubsection{Backward Stochastic Differential Equations} \label{sec BSDE}

We now consider Backward Stochastic Differential Equations. Let $\F$ be one of the filtrations $\F^i, i \ge 0$ or $\F^\infty$. Let $\xi$ be a $\cF_T$-measurable variable and $f : \Omega \times[0,T] \times \R^d \times \R^{d \times \kappa} \to \R^d$. We assume here that $\xi$ and $f$ are standard parameters \cite{EKPQ97}:
\begin{itemize}
\item $\xi \in L^2(\cF_T)$,
\item $f(\cdot, 0, 0) \in \H^2_d(\F )$,
\item There exists $C > 0$ such that
  \begin{align*}
    |f(t,y_1,z_1) - f(t, y_2, z_2)| \le C\left(|y_1-y_2| + |z_1+z_2|\right).
  \end{align*}
\end{itemize}
Under these hypothesis, since $\F$ is right-continuous, one can prove (\cite{EKPQ97}, Theorem 5.1):
\begin{Theorem} \label{ex-uni-bsde} 
  There exists a unique solution $(Y,Z,M) \in \mathbb{S}^2_d(\F) \times \H^2_{d \times \kappa}(\F) \times \H^2_d(\F)$ such that $M$ is a martingale with $M_0 = 0$, orthogonal to the Brownian motion, and satisfying
  \begin{align*}
    Y_t = \xi + \int_t^T f(s,Y_s,Z_s) \ud s - \int_t^T Z_s \ud W_s - \int_t^T \ud M_s.
  \end{align*}
\end{Theorem}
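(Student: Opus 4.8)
The plan is to follow the classical El~Karoui--Peng--Quenez fixed-point scheme, taking as granted the two structural facts that make it work in the present setting: that $\F$ (one of $\F^i$ or $\F^\infty$) is right-continuous, established in Appendix~\ref{mrt}, and that every $\xi'\in L^2(\cF_T)$ admits the orthogonal decomposition provided by the Integral Representation Theorems of Appendix~\ref{mrt}, namely $\xi'=\esp{\xi'}+\int_0^T\Psi_s\,\ud W_s+\sum_k\Delta^k_T$ with the Brownian integral and the jump martingale $\sum_k\Delta^k$ mutually orthogonal. First I would treat the case of a driver not depending on $(Y,Z)$: given $h\in\H^2_d(\F)$, set $N_t:=\espcond{\xi+\int_0^T h_s\,\ud s}{\cF_t}$, a square-integrable $\F$-martingale; applying the representation theorem to $\xi'=\xi+\int_0^T h_s\,\ud s$ yields $Z\in\H^2_{d\times\kappa}(\F)$ and a square-integrable martingale $M$ with $M_0=0$, orthogonal to $W$, such that $N_t=N_0+\int_0^t Z_s\,\ud W_s+M_t$. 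Then $Y_t:=N_t-\int_0^t h_s\,\ud s$ solves the BSDE with driver $h$, with $Y\in\mathbb S^2_d(\F)$ by Doob's inequality, and this solution is unique by taking conditional expectations.

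Next I would run the contraction argument for the general Lipschitz driver. On $\mathcal H:=\H^2_d(\F)\times\H^2_{d\times\kappa}(\F)$ with the weighted norm $\|(U,V)\|_\beta^2:=\esp{\int_0^T e^{\beta s}\big(|U_s|^2+|V_s|^2\big)\,\ud s}$, define $\Phi(U,V):=(Y,Z)$ where $(Y,Z,M)$ is the solution, obtained in the previous step, of the BSDE with driver $s\mapsto f(s,U_s,V_s)$ — a process in $\H^2_d(\F)$ thanks to the Lipschitz bound and $f(\cdot,0,0)\in\H^2_d(\F)$. For two inputs with outputs $(Y,Z,M)$ and $(Y',Z',M')$, I would apply It\^o's formula to $e^{\beta s}|Y_s-Y'_s|^2$ between $t$ and $T$; since $M-M'$ is orthogonal to $W$, the term $\ud[M-M']$ enters with a nonnegative sign and can simply be dropped, and the Lipschitz property together with Young's inequality then gives $\|\Phi(U,V)-\Phi(U',V')\|_\beta^2\le\tfrac12\|(U-U',V-V')\|_\beta^2$ for $\beta$ large enough (depending only on the Lipschitz constant). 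Hence $\Phi$ is a contraction, its unique fixed point $(Y,Z)$ together with the associated $M$ gives a solution of the BSDE, and the $\mathbb S^2_d(\F)$-estimate on $Y$ follows from the Burkholder--Davis--Gundy inequality applied to its representation.

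Finally, uniqueness of the full triple follows by the same It\^o computation applied to $e^{\beta s}|Y_s-\tilde Y_s|^2$ for two solutions $(Y,Z,M)$, $(\tilde Y,\tilde Z,\tilde M)$: again discarding the nonnegative bracket term of $M-\tilde M$, one gets $\|(Y-\tilde Y,Z-\tilde Z)\|_\beta=0$ for $\beta$ large, hence $Y=\tilde Y$ and $Z=\tilde Z$, and then $M=\tilde M$ by difference in the equation.

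I expect the only genuinely non-routine point to be the first step, i.e.\ securing the decomposition of a general square-integrable $\F$-martingale as a Brownian stochastic integral plus a martingale orthogonal to $W$; but this is precisely the content of the Integral Representation Theorems of Appendix~\ref{mrt}, and once it is available the extra term $\int\ud M$ is in fact harmless — being orthogonal to $W$ it only contributes a favourable nonnegative $\ud[M]$ term in every It\^o expansion — so the classical Brownian argument carries over verbatim.
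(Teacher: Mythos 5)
Your proposal is correct and matches the paper's route: the paper simply invokes \cite{EKPQ97}, Theorem 5.1, whose proof is exactly the scheme you describe (right-continuity of $\F$, the orthogonal decomposition $Z\,\ud W + \ud M$ from the Integral Representation Theorems of Appendix \ref{mrt}, then the weighted-norm contraction). The one point worth stating explicitly in a write-up is that the representation of the martingale $t\mapsto\espcond{\xi'}{\cF_t}$ for all $t$ (not just the terminal decomposition of $\xi'$) follows by conditioning the appendix identity and using the martingale property and mutual orthogonality of $\int_0^\cdot\Psi_s\,\ud W_s$ and $\sum_k\Delta^k$.
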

When $d=1$, one can easily deal with linear BSDEs in $\F$, and the specific form of its solutions allows to prove a Comparison Theorem. The proofs follow closely \cite{EKPQ97}, Theorem 2.2.
\begin{Theorem}
  Let $(b,c)$ be a bounded $(\R \times \R^\kappa)$-valued predictable process and let $a \in \H^2(\F)$. Let $\xi \in L^2(\cF_T)$ and let $(Y,Z,M) \in \mathbb{S}^2(\F) \times \H^2_{1\times \kappa}(\F) \times \H^2(\F)$ be the unique solution to
  \begin{align*}
    Y_t = \xi + \int_t^T \left( a_s Y_s + b_s Z_s + c_s \right) \ud s - \int_t^T Z_s \ud W_s - \int_t^T \ud M_s.
  \end{align*}
  Let $\Gamma \in \H^2(\F)$ the solution to
  \begin{align*}
    \Gamma_t = 1 + \int_0^t \Gamma_s a_s \ud s + \int_0^t \Gamma_s b_s \ud W_s.
  \end{align*}
  Then, for all $t \in [0,T]$, one has almost surely,
  \begin{align*}
    Y_t = \Gamma_t^{-1}\espcond{\Gamma_T \xi + \int_t^T \Gamma_s c_s \ud s}{\cF_t}.
  \end{align*}
\end{Theorem}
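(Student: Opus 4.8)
The plan is to follow the classical argument for linear BSDEs (\cite{EKPQ97}, Theorem 2.2), adapted to the presence of the orthogonal martingale part $M$. First I would record that the linear SDE defining $\Gamma$ has the explicit solution $\Gamma_t = \exp\!\left( \int_0^t a_s \ud s - \tfrac12 \int_0^t |b_s|^2 \ud s + \int_0^t b_s \ud W_s \right)$, so that $\Gamma$ is strictly positive and, by the assumed integrability/boundedness of $a$ and $b$, lies in $L^p$ for every $p$ (in particular $\Gamma \in \H^2(\F)$ and $\sup_{t \le T} \Gamma_t \in \bigcap_p L^p$), and the same holds for $\Gamma_t^{-1} = \exp\!\left( -\int_0^t a_s \ud s + \tfrac12 \int_0^t |b_s|^2 \ud s - \int_0^t b_s \ud W_s \right)$. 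In particular $\Gamma_t^{-1}$ is well-defined, which legitimises dividing by $\Gamma_t$ at the end.

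Next I would apply Itô's product rule to $\Gamma_t Y_t$ (which is clean here since $\Gamma$ is continuous, even though $M$ may jump). From the BSDE, $\ud Y_t = -(a_t Y_t + b_t Z_t + c_t)\ud t + Z_t \ud W_t + \ud M_t$, while $\ud\Gamma_t = \Gamma_t a_t \ud t + \Gamma_t b_t \ud W_t$. Since $M$ is orthogonal to $W$ and the martingale part of $\Gamma$ is a stochastic integral against $W$, one has $[\Gamma,M] \equiv 0$, so the only bracket contribution is $\ud[\Gamma,Y]_t = \Gamma_t\, b_t Z_t\, \ud t$. Collecting terms, the drifts $\Gamma_t a_t Y_t$ coming from $\Gamma\,\ud Y$ and $Y\,\ud\Gamma$ cancel, and the $\Gamma_t b_t Z_t$ coming from $\Gamma\,\ud Y$ and from the bracket cancel as well, leaving
\begin{align*}
\ud\!\left( \Gamma_t Y_t + \int_0^t \Gamma_s c_s \ud s \right) = \Gamma_t (Z_t + Y_t b_t)\, \ud W_t + \Gamma_t\, \ud M_t .
\end{align*}
Hence $N_t := \Gamma_t Y_t + \int_0^t \Gamma_s c_s \ud s$ is a local martingale.

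The main (and only nontrivial) point is to upgrade $N$ to a genuine, uniformly integrable martingale. I would estimate $\esp{ \int_0^T \Gamma_s^2 \big( |Z_s|^2 + |b_s|^2|Y_s|^2 \big)\ud s + \int_0^T \Gamma_s^2\, \ud[M]_s } < +\infty$ using $Y \in \mathbb S^2(\F)$, $Z, M \in \H^2(\F)$, the boundedness of $b$, the $L^p$-bounds on $\sup_t \Gamma_t$, and Hölder's inequality; should the direct estimate fall just short, one localises with stopping times $T_n \uparrow T$, writes $N_{t \wedge T_n} = \espcond{N_{T_n}}{\cF_t}$, and passes to the limit by uniform integrability exactly as in \cite{EKPQ97}, Theorem 2.2. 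Once $N$ is a true martingale, $N_t = \espcond{N_T}{\cF_t} = \espcond{\Gamma_T\xi + \int_0^T \Gamma_s c_s \ud s}{\cF_t}$; subtracting the $\cF_t$-measurable quantity $\int_0^t \Gamma_s c_s \ud s$ gives $\Gamma_t Y_t = \espcond{\Gamma_T\xi + \int_t^T \Gamma_s c_s \ud s}{\cF_t}$, and dividing by $\Gamma_t > 0$ yields the claimed formula. I expect the martingale-property verification to be the only real obstacle; the rest is the deterministic algebra of the product rule together with the explicit form and moment bounds for $\Gamma$ and $\Gamma^{-1}$.
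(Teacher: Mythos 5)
Your proposal follows exactly the paper's argument: apply It\^o's product rule to $\Gamma_t Y_t$, use the continuity of $\Gamma$ (and orthogonality of $M$ to $W$) to see that the only bracket term is $\Gamma_t b_t Z_t\,\ud t$, deduce that $\Gamma_t Y_t + \int_0^t \Gamma_s c_s\,\ud s$ is a martingale, and take conditional expectation. If anything you are more careful than the paper, which simply asserts that $N$ is a martingale; your explicit verification via the $L^p$-bounds on $\sup_t\Gamma_t$ (which does require the drift coefficient $a$ to be bounded, as it is in the paper's application to the comparison theorem) and the localisation fallback is a welcome addition but not a different route.
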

\begin{proof}
  We fix $t \in [0,T]$ and we apply Itô's formula to the process $Y_t \Gamma_t$:
  \begin{align*}
    \ud \left(Y_t \Gamma_t\right) = Y_{t^-} \ud \Gamma_t + \Gamma_{t^-} \ud Y_t + \ud \left[ Y, \Gamma \right]_t.
  \end{align*}
  Since $\Gamma$ is continuous, we get $\left[ Y, \Gamma \right]_t = \left< Y^c, \Gamma^c \right>_t + \sum_{s \le t} \left(\Delta Y_s\right)\left(\Delta\Gamma_s\right) = \left< Y^c, \Gamma \right>_t$, thus,
  \begin{align*}
    \ud \left(Y_t\Gamma_t\right) = \Gamma_t\left(b_tY_t + Z_t\right) \ud W_t + \Gamma_t \ud M_t - \Gamma_t c_t \ud t.
  \end{align*}
  We define a martingale by $N_t = \int_0^t \Gamma_s (b_s Y_s + Z_s) \ud W_s + \int_0^t \Gamma_s \ud M_s$, and the previous equality gives
  \begin{align*}
    Y_T \Gamma_T = Y_t \Gamma_t - \int_t^T \Gamma_s c_s \ud s + N_T - N_t.
  \end{align*}
  Taking conditional expectation with respect to $\cF_t$ on both sides gives the result.\\
  \eproof\end{proof}
\begin{Theorem} \label{comparison}
  Let $(\xi,f)$ and $(\xi',f')$ two standard parameters. Let $(Y,Z,M) \in \mathbb{S}^2(\F) \times \H^2_{1\times \kappa}(\F) \times \H^2(\F)$ (resp. $(Y',Z',M')$) the solution associated with $(\xi,f)$ (resp. $(\xi',f')$). Assume that
  \begin{itemize}
  \item $\xi \ge \xi'$ a.s.,
  \item $f(Y',Z',M') \ge f'(Y',Z',M')$ a.s.
  \end{itemize}
  Then $Y_t \ge Y'_t$ almost surely for all $t \in [0,T]$.
\end{Theorem}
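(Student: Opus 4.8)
The plan is to mimic the classical comparison argument for linear BSDEs (as in \cite{EKPQ97}, Theorem 2.2), adapted to the present non-Brownian filtration $\F$ where an extra orthogonal martingale appears in the representation. The key observation is that the difference process $\delta Y := Y - Y'$ solves a \emph{linear} BSDE in $\F$ whose solution can be written explicitly via the adjoint (exponential) process $\Gamma$ from the previous theorem, and positivity of $\delta Y$ then follows from positivity of all the ingredients in that explicit formula.

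\medskip
\noindent\emph{Step 1: linearisation.} First I would set $\delta Y := Y - Y'$, $\delta Z := Z - Z'$, $\delta M := M - M'$, $\delta\xi := \xi - \xi' \ge 0$, and write
\begin{align*}
  \delta Y_t = \delta\xi + \int_t^T \big( f(s,Y_s,Z_s) - f'(s,Y'_s,Z'_s) \big)\,\ud s - \int_t^T \delta Z_s \,\ud W_s - \int_t^T \ud(\delta M)_s.
\end{align*}
Decompose the driver difference as $f(s,Y_s,Z_s) - f'(s,Y'_s,Z'_s) = \big(f(s,Y_s,Z_s) - f(s,Y'_s,Z'_s)\big) + \big(f(s,Y'_s,Z'_s) - f'(s,Y'_s,Z'_s)\big)$. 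The first bracket is handled by the Lipschitz property of $f$: on the set where $\delta Y_s \neq 0$ we can write it as $a_s \delta Y_s + b_s\,\delta Z_s$ for bounded predictable $(a,b)$ (the usual incremental-ratio / Hahn–Banach linearisation, valued in $\R \times \R^{\kappa}$, with $|a|,|b| \le C$), and on $\{\delta Y_s = 0, \delta Z_s = 0\}$ we set $a_s = b_s = 0$; more carefully $b_s$ is defined coordinate by coordinate so that $f(s,Y_s,Z_s)-f(s,Y'_s,Z'_s) = a_s\delta Y_s + b_s\delta Z_s$ holds identically. The second bracket is $c_s := f(s,Y'_s,Z'_s) - f'(s,Y'_s,Z'_s) \ge 0$ by hypothesis, and $c \in \H^2(\F)$ since both $f(\cdot,Y',Z')$ and $f'(\cdot,Y',Z')$ are in $\H^2(\F)$ (Lipschitz plus $f(\cdot,0,0), f'(\cdot,0,0) \in \H^2(\F)$ plus $(Y',Z') \in \mathbb S^2 \times \H^2$). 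Thus $(\delta Y, \delta Z, \delta M)$ is the unique solution of the linear BSDE with data $(\delta\xi, a, b, c)$ of exactly the form covered by the preceding theorem (the driver there is $a_sY_s + b_sZ_s + c_s$, and the orthogonal-martingale part is absorbed into $\ud M_s$).

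\medskip
\noindent\emph{Step 2: explicit representation and conclusion.} Let $\Gamma$ solve $\Gamma_t = 1 + \int_0^t \Gamma_s a_s\,\ud s + \int_0^t \Gamma_s b_s\,\ud W_s$; since $(a,b)$ is bounded predictable, $\Gamma$ is a well-defined positive continuous process (a stochastic exponential, $\Gamma_t = \exp(\int_0^t (a_s - \tfrac12|b_s|^2)\ud s + \int_0^t b_s\,\ud W_s) > 0$), and $\Gamma, \Gamma^{-1} \in \H^2(\F)$ after checking the standard exponential-moment bounds coming from boundedness of $a,b$. Applying the previous theorem to $(\delta Y, \delta Z, \delta M)$ gives, for every $t \in [0,T]$, almost surely
\begin{align*}
  \delta Y_t = \Gamma_t^{-1}\,\espcond{\Gamma_T\,\delta\xi + \int_t^T \Gamma_s c_s\,\ud s}{\cF_t}.
\end{align*}
Since $\Gamma_t^{-1} > 0$, $\Gamma_T > 0$, $\delta\xi \ge 0$, $\Gamma_s > 0$ and $c_s \ge 0$ for all $s$, the conditional expectation is nonnegative, hence $\delta Y_t \ge 0$, i.e. $Y_t \ge Y'_t$ a.s., which is the claim.

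\medskip
\noindent The main obstacle is essentially bookkeeping rather than a genuine difficulty: one must make sure the linearisation coefficients $(a,b)$ are genuinely predictable and bounded — this is routine in dimension $1$, using the scalar structure and defining $b$ componentwise — and one must verify that the integrability hypotheses ($c \in \H^2(\F)$, $\Gamma^{\pm 1} \in \H^2(\F)$) hold so that the linear representation theorem is applicable verbatim. The presence of the orthogonal martingale $M$ causes no trouble: it never enters the drivers (which by assumption depend only on $(t,y,z)$), and it is simply carried along inside $\ud M_s$ in both the linear BSDE and its explicit solution, exactly as in the statement of the preceding theorem.
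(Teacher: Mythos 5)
Your proposal is correct and follows essentially the same route as the paper: linearise the driver difference into $a_s\,\delta Y_s + b_s\,\delta Z_s + c_s$ with bounded predictable $(a,b)$ and nonnegative $c$, observe that $(\delta Y,\delta Z,\delta M)$ solves the resulting linear BSDE, and conclude via the explicit representation $\delta Y_t = \Gamma_t^{-1}\espcond{\Gamma_T\,\delta\xi + \int_t^T \Gamma_s c_s \,\ud s}{\cF_t}$ from the preceding theorem together with the strict positivity of $\Gamma$. Your additional verifications (predictability and boundedness of the linearisation coefficients, $c \in \H^2(\F)$, integrability of $\Gamma^{\pm1}$) are routine bookkeeping that the paper leaves implicit.
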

\begin{proof}Since $f$ is Lipschitz, we consider the bounded processes $a,b$ and $c$ defined by:
  \begin{align}
    a_t &= \frac{f(t,Y_t,Z_t) - f(t,Y'_t,Z_t)}{(Y_t - Y'_t)}1_{Y_t \neq Y'_t}, \\
    b^i_t &= \frac{\left(f(t,Y'_t,Z_t) - f(t,Y'_t, Z'_t)\right)(Z_t-Z'_t)}{|Z_t-Z'_t|^2} 1_{Z_t \neq Z'_t} , \\
    c_t &= f(Y',Z',M') - f'(Y',Z',M'),
  \end{align}
  Setting $\delta Y_t = Y_t - Y'_t, \delta Z_t = Z_t - Z'_t$ and $\delta M_t = M_t - M'_t$, we observe that $(\delta Y, \delta Z, \delta M)$ is the solution to the following linear BSDE:
  \begin{align}
    \delta Y_t = \delta Y_T + \int_t^T \left( a_s \delta Y_s + b_s \delta Z_s + c_s \right) \ud s - \int_t^T \delta Z_s \ud W_s - \int_t^T \ud \delta M_s.
  \end{align}
  Using the previous Theorem, we get $Y_t = \Gamma_t^{-1} \espcond{\delta Y_T \Gamma_T + \int_t^T \Gamma_s c_s \ud s}{\cF_t}$. By definition, $\Gamma$ is a strictly positive process, and $\delta Y_t$ and $c$ are positive by hypothesis, hence $Y_t \ge 0$.\\
  \eproof\end{proof}

\bibliographystyle{plain}
\bibliography{bib}
\end{document}